\title
{Strichartz estimates on asymptotically
  de Sitter spaces}
\author{Dean Baskin}
\date{June 8, 2012}
\newtheorem{thm}{Theorem} 
\newtheorem{lem}[thm]{Lemma}
\newtheorem{prop}[thm]{Proposition}
\theoremstyle{definition} 
\newtheorem{defn}[thm]{Definition}
\theoremstyle{remark} 
\newtheorem{remark}[thm]{Remark}
\numberwithin{thm}{section}
\numberwithin{equation}{section}
\newcommand{\diag}{\operatorname{diag}}
\newcommand{\grad}{\nabla}
\newcommand{\lap}{\Delta}
\newcommand{\naturals}{\mathbb{N}}
\newcommand{\norm}[2][]{\left\| #2 \right\|_{#1}} 
\newcommand{\pd}[1][]{\partial_{#1}}
\newcommand{\reals}{\mathbb{R}}
\newcommand{\sphere}{\mathbb{S}}
\newcommand{\differential}[1]{\,d#1}
\renewcommand{\dh}{\differential{h}}
\newcommand{\dk}{\differential{k}}
\newcommand{\ds}{\differential{s}}
\newcommand{\dt}{\differential{t}}
\newcommand{\dT}{\differential{T}}
\newcommand{\dv}{\differential{v}}
\newcommand{\dx}{\differential{x}}
\newcommand{\dy}{\differential{y}}
\newcommand{\deta}{\differential{\eta}}
\newcommand{\dsigma}{\differential{\sigma}}
\newcommand{\dtau}{\differential{\tau}}
\newcommand{\dtheta}{\differential{\theta}}
\newcommand{\dzeta}{\differential{\zeta}}
\newcommand{\espaces}[2][t]{H_{E,#2}(#1)}
\newcommand{\espace}[1][t]{H_{E}(#1)}
\newcommand{\F}{\mathcal{F}}
\newcommand{\xt}{\tilde{x}}
\newcommand{\yt}{\tilde{y}}
\newcommand{\dblspace}{\widetilde{X}_{0}^{2}}
\newcommand{\dblzero}{X_{0}^{2}}
\newcommand{\frontface}{\operatorname{ff}}
\newcommand{\newface}{\operatorname{lcf}}
\newcommand{\LC}{\operatorname{LC}}
\newcommand{\leftface}{\operatorname{lf}}
\newcommand{\rightface}{\operatorname{rf}}
\newcommand{\LDfull}[4][m]{I^{#1}_{#2}\left( #3 ; #4\right)}
\newcommand{\LDzero}[3][m]{\LDfull[#1]{#2}{\dblzero}{#3}}
\newcommand{\phgLD}[3][m]{\mathcal{A}_{\operatorname{phg}}^{#2}\LDfull[#1]{}{\dblspace}{#3}}
\newcommand{\phg}[2]{\mathcal{A}_{\operatorname{phg}}^{#1}(\dblspace)}
\newcommand{\Wzero}{{}^{0}W}
\begin{document}

\begin{abstract}
  In this article we prove a family of local (in time) weighted
  Strichartz estimates with derivative losses for the Klein-Gordon
  equation on asymptotically de Sitter spaces and provide a heuristic
  argument for the non-existence of a global dispersive estimate on
  these spaces.  The weights in the estimates depend on the mass
  parameter and disappear in the ``large mass'' regime.  We also
  provide an application of these estimates to establish small-data
  global existence for a class of semilinear equations on these
  spaces.
\end{abstract}

\maketitle

\section{Introduction}
\label{sec:introduction}

In this paper we prove a family of local (in time) weighted
Strich\-artz estimates with derivative losses for the Klein-Gordon
equation on asymptotically de Sitter spaces, with the constants
depending explicitly on the length of the time interval.  We also
provide a heuristic argument for the non-existence of a global
dispersive estimate on these spaces.  As an application we establish
small-data global well-posedness for a class of defocusing semilinear
wave equations in this context.

The main novelty of this paper is twofold.  The first is that local
Strich\-artz estimates with loss follow from the Fourier integral
operator (i.e., Lagrangian distribution) representation of the
propagator near the diagonal.  In particular, though we do not show it
here, our method extends to prove dispersive-type estimates near the
diagonal for a somewhat larger class of Lagrangian distributions.  The
second is that our method works even in the presence of time-dependent
metrics (see also Tataru \cite{tataru:2001a} or Smith
\cite{smith:2006} for wave-packet proofs of Strichartz estimates in a
time-dependent setting).

Strichartz estimates are mixed $L^{p}$ (in time) and $L^{q}$ (in
space) estimates that provide a measure of dispersion for the wave and
Klein-Gordon equations.  A version of these estimates was originally
discovered by Strichartz \cite{Strichartz:1977} and first appeared in
their modern form in works of Ginibre and Velo \cite{Ginibre:1984} (for
the Schr{\"o}dinger equation); Kapitanski{\u\i}
\cite{Kapitanskii:1991} and Mockenhaupt, Seeger, and Sogge
\cite{Mockenhaupt:1993} (for the wave equation); and Pecher
\cite{Pecher:1984} (for the Klein-Gordon equation).  These estimates
have been instrumental in proving well-posedness for various
semilinear dispersive equations.  In the context of general
relativity, Marzuola \emph{et al.}\ \cite{Marzuola:2010}
and Tohaneanu \cite{Tohaneanu:2012} established Strichartz estimates
for the static Schwarzschild and the rotating Kerr black hole
backgrounds, respectively.  Yagdjian and Galstian
\cite{Yagdjian:2009a} proved a family of Strichartz-type estimates on
a model of de Sitter space.

We now describe Strichartz estimates for the wave and Klein-Gordon
equations on Minkowski space $\reals \times \reals^{n}$.  For
allowable exponents $(p,q,s)$, a solution $u$ of $\Box u = 0$ or $\Box
u + u = 0$ satisfies the following estimate:
\begin{align*}
  \left( \int_{0}^{T}\norm[W^{1-s,q}]{u(t,\cdot)}^{p}\right)^{1/p} &+
  \left(
    \int_{0}^{T}\norm[W^{-s,q}]{\pd[t]u(t,\cdot)}^{p}\right)^{1/p} \\
  &\quad\quad\quad\quad \lesssim \norm[H^{1}]{u(0,\cdot)} + \norm[L^{2}]{(\pd[t]u)(0,\cdot)}
\end{align*}
A similar estimate holds for the inhomogeneous equation.  If $T <
\infty$, then the estimate is \emph{local}, while if $T=\infty$ it is
\emph{global}.  The allowable exponents $(p,q,s)$ must satisfy two
conditions: the \emph{admissibility} condition and the \emph{scaling}
condition.  For the wave equation the conditions are as follows:
\begin{align*}
  \frac{2}{p} + \frac{n-1}{q} &\leq \frac{n-1}{2} \tag{wave admissibility} \\
  \frac{1}{p} + \frac{n}{q} &= \frac{n}{2} - s \tag{scaling}
\end{align*}
With the Klein-Gordon equation, the scaling condition is unchanged,
while the admissibility condition is less restrictive:
\begin{equation*}
  \frac{2}{p} + \frac{n}{q} \leq \frac{n}{2} \tag{KG admissibility}
\end{equation*}
If $s$ is larger than the value indicated by the scaling condition,
the Strichartz estimates are said to have a loss of derivatives.  The
difference seen in the Strichartz exponents between the wave and
Klein-Gordon equation is largely a long-time effect.  As seen below in
Theorem~\ref{thm:strichartz-homog}, we obtain analogs of the wave
exponents and not the Klein-Gordon exponents.  This effect is due to
the lack of a global dispersive estimate in our setting.

The asymptotically de Sitter spaces we consider in this paper are
diffeomorphic to $\reals \times Y$ for some compact $n$-dimensional
manifold $Y$ and are equipped with metrics having the following form
near $t = +\infty$:
\begin{equation*}
  -\dt^{2} + e^{2t}h(e^{-t}, y, \dy)
\end{equation*}
Here $h$ is a smooth (as a function of $e^{-t}$) family of Riemannian
metrics on $Y$.  In other words, we consider metrics that are
asymptotically de Sitter in a global sense, i.e., they have a
spacelike infinity.  Note that the de Sitter-Schwarzschild and de
Sitter-Kerr black hole spacetimes do not fall into this class.  A more
precise description of the class of metrics considered is given in
Section~\ref{sec:asympt-de-sitt}.

The main result of this paper is the following Strichartz-type
estimate for the operator $P(\lambda) = \Box_{g} + \lambda$ (a
dictionary of function spaces is found in Section~\ref{sec:notation}):
\begin{thm}
  \label{thm:strichartz-homog}
  Suppose that $(X,g)$ is an asymptotically de Sitter space, $t_{0}$
  is sufficiently large, $\lambda > 0$ and $\alpha$ is such that $0
  \leq \alpha < \sqrt{\lambda}$ if $\lambda \leq \frac{n^{2}}{4}$ and
  $\alpha = \frac{n}{2}$ if $\lambda > \frac{n^{2}}{4}$.  Suppose
  further that $\epsilon > 0$ and $u$ satisfies the homogeneous
  initial value problem:
  \begin{align*}
    P(\lambda) u &= 0 \\
    (u,\pd[t]u)|_{t=t_{0}} &= (\phi, \psi)
  \end{align*}
  The function $u$ then satisfies a uniform local Strichartz estimate:
  \begin{align*}
    &\norm[e^{(n-2\alpha)(t-t_{0})/q}L^{p}\left( {[t_{0}, t_{0} + T]};
      \Wzero^{1-s,q}(\dk_{t})\right)]{u}  \\
    &\quad + \norm[e^{(n-2\alpha)(t-t_{0})/q}L^{p}\left( {[t_{0}, t_{0} + T]};
      \Wzero^{-s,q}(\dk_{t})\right)]{\pd[t]u} \\
    &\quad\quad\quad\quad\quad\quad\leq C\max\left( T,
      e^{(n-2\alpha)T/2}\right) \norm[{\espace[t_{0}]}]{(\phi, \psi)}
  \end{align*}
  Here the constant is independent of $t_{0}$ and $T$, while $p,q \geq
  2$, $q\neq \infty$, and $(p,q,s)$ satisfies the following:
  \begin{align}
    \label{eq:admissible-epsilon}
    \frac{2}{p} + \frac{n-1}{q} &\leq \frac{n-1}{2} \\ 
    \frac{1}{p} + \frac{n}{q} &= \frac{n}{2} + \epsilon - s \notag
  \end{align}
  
  If, in addition, we know that $h$ is independent of $t$ for large
  $t$, then we may take $\epsilon = 0$ above.
\end{thm}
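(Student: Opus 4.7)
The plan is to derive the estimate from a Lagrangian distribution (FIO) representation of the forward propagator of $P(\lambda)$ near the diagonal, combined with a Keel--Tao $TT^*$ argument on short time slabs. The first step is to conjugate $P(\lambda)$ by a suitable power of $e^{-(t-t_0)}$ so that the conjugated operator sits cleanly in the $0$-calculus framework on the conformal compactification $x = e^{-t}$, where the de Sitter metric takes the form $x^{-2}(-dx^2 + h(x, y, dy))$. The parameter $\alpha$ is dictated by the indicial structure of $P(\lambda)$ at the conformal boundary: for $\lambda \leq n^2/4$ the indicial roots are real and one may take any $\alpha < \sqrt{\lambda}$, while for $\lambda > n^2/4$ the balanced choice $\alpha = n/2$ is forced. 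This conjugation is precisely what produces the exponential weights $e^{(n-2\alpha)(t-t_0)/q}$ on the left-hand side.

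The core analytic step is to represent the forward fundamental solution of the conjugated operator on each short interval $[t_j, t_{j+1}]$ as a Lagrangian distribution associated with the bicharacteristic flowout of the principal symbol. Near the diagonal, its Schwartz kernel is an oscillatory integral with a phase that should be non-degenerate up to the boundary face and depend smoothly on $t$ and on the boundary defining function $x$. Applying stationary phase in the $\Wzero$ framework, I would obtain a fixed-time dispersive estimate of order $|t-t_j|^{-(n-1)/2}$ for the $L^1(\dk_{t_j}) \to L^\infty(\dk_{t_j})$ norm, at the cost of a small $\epsilon$-loss of derivatives used to absorb the imperfect smoothness of the phase in $t$; this loss should disappear exactly when $h$ is eventually stationary, so the phase is classical. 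Interpolation with the $\espace$-energy bound then yields the family of pointwise-in-time $L^q$ estimates needed as input.

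With the dispersive and energy bounds in hand, the Keel--Tao formalism delivers a Strichartz inequality on each unit interval for all exponents $(p,q,s)$ satisfying the stated wave admissibility and shifted scaling conditions. Concatenating these local estimates over the $O(T)$ slabs covering $[t_0, t_0 + T]$, and accounting for the exponential growth of the volume element $\dk_t$ between slabs, the constants add up to the claimed $C \max(T, e^{(n-2\alpha)T/2})$. I expect the main obstacle to be the construction and uniform control of the Lagrangian parametrix up to the boundary face $\{x = 0\}$, together with verifying the non-degeneracy of the phase so that stationary phase achieves the optimal decay rate; quantifying the minimal $\epsilon$-loss induced by the $t$-dependence of $h$ is the second delicate point, as it is what distinguishes the genuinely time-dependent case from the asymptotically stationary one.
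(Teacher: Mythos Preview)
Your overall architecture matches the paper's: an FIO representation of the propagator on the $0$-double space, a $TT^{*}$ argument combining a dispersive bound with the energy estimate, and concatenation over unit slabs via Proposition~\ref{prop:energy-est-main}. Two of your attributions, however, are wrong in ways that would derail the execution.

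First, the $\epsilon$-loss has nothing to do with ``imperfect smoothness of the phase in $t$.'' The phase is smooth down to the front face of $\dblzero$ in every case; that is precisely the content of Proposition~\ref{prop:homog-sol-op}. The loss arises because the kernel of $U'(t,t_{0})U'(s,t_{0})^{*}$ is a Lagrangian distribution of order $-1/2$, hence pointwise unbounded: to extract an $L^{1}\to L^{\infty}$ bound one must first regularize by $(1+\lap_{k_{t}})^{-r}$ with $r=\frac{n+1}{4}+\frac{\epsilon}{2}$, pushing the order below $-\frac{n}{2}-1$ so the symbol becomes integrable (Theorem~\ref{lem:dispersive-1}, Lemma~\ref{lem:composition-1}). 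De-regularizing via Lemma~\ref{lem:action-on-Lp} then costs the $\epsilon$ in $L^{q}$. When $h$ is eventually stationary the gain is not that the phase becomes ``classical'' but that fixed Littlewood--Paley projectors commute with the evolution, so one localizes in frequency before the $TT^{*}$ step and bypasses the regularize/de-regularize cycle entirely (Theorem~\ref{thm:h-indep-t-dispersive}). If you try to locate the $\epsilon$ in the phase you will not find it, and you will not see why stationarity of $h$ removes it.

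Second, the exponential weight $e^{(n-2\alpha)(t-t_{0})/q}$ and the constant $e^{(n-2\alpha)T/2}$ are not produced by growth of the volume form between slabs; they come from the energy estimate itself (Proposition~\ref{prop:energy-est-main}), which already absorbs the volume growth. In the paper the weight is built directly into the operator $T_{q}\psi = e^{-(n-2\alpha)(t-t_{0})/q}U'(t,t_{0})\psi$, so that $T_{q}T_{q}^{*}$ has a clean $|t-s|^{-(n-1)/2}$ bound on each unit interval (Theorem~\ref{thm:homog-dispersive-full}); Hardy--Littlewood--Sobolev then gives the unit-slab Strichartz bound, and summing uses only the energy growth rate $e^{(n-2\alpha)/2}$ per slab.
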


A similar theorem holds for the inhomogeneous problem:
\begin{thm}
  \label{thm:strichartz-inhomog}
  Suppose that $(X,g)$ is an asymptotically de Sitter space, $t_{0}$
  is sufficiently large, $\lambda > 0$ and $\alpha$ is such that $0
  \leq \alpha < \sqrt{\lambda}$ if $\lambda \leq \frac{n^{2}}{4}$ and
  $\alpha = \frac{n}{2}$ if $\lambda > \frac{n^{2}}{4}$.  Suppose
  further that $\epsilon > 0$ and $u$ satisfies the inhomogeneous
  initial value problem:
  \begin{align*}
    P(\lambda) u &= f\\
    (u,\pd[t]u)|_{t=t_{0}} &= (0,0)
  \end{align*}
  The function $u$ then satisfies a uniform local Strichartz estimate
  with constant independent of $t_{0}$ and $T$:
  \begin{align*}
    &\norm[e^{(n-2\alpha)(t-t_{0})/q}L^{p}\left( {[t_{0}, t_{0} + T]};
      \Wzero^{1-2s,q}(\dk_{t})\right)]{u} \\
    &\quad + \norm[e^{(n-2\alpha)(t-t_{0})/q}L^{p}\left( {[t_{0}, t_{0} + T]};
      \Wzero^{-2s,q}(\dk_{t})\right)]{\pd[t]u} \\
    &\quad\quad\quad\quad\quad\quad\leq C e^{(n-2\alpha)T/2}\max\left(
      1 , T^{\left(
          \frac{1}{q'}-\frac{1}{q}\right)\left(\frac{n-1}{2}\right)}\right)
    \cdot \\
      &\quad\quad\quad\quad\quad\quad\quad \cdot \norm[e^{(n-2\alpha)t/q}L^{p'}\left( {[t_{0}, t_{0}+T]}; L^{q'}(\dk_{t})\right)]{f}
  \end{align*}

  Here $p,q \geq 2$, $q\neq \infty$, and $(p,q,s)$ must satisfy the following:
  \begin{align*}
    \frac{2}{p} + \frac{n-1}{q} &\leq \frac{n-1}{2} \\
    \frac{1}{p} + \frac{n}{q} &= \frac{n}{2} + \epsilon - s \notag
  \end{align*}

  If $h$ is independent of $t$ for large $t$, we may take $\epsilon =
  0$ above.
\end{thm}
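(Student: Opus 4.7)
The plan is to derive Theorem~\ref{thm:strichartz-inhomog} from Theorem~\ref{thm:strichartz-homog} via a $TT^{*}$ duality argument combined with the Christ--Kiselev lemma. The doubling of the derivative loss from $s$ in the homogeneous estimate to $2s$ in the inhomogeneous one is the usual fingerprint of $TT^{*}$: one loses $s$ derivatives going from the spacetime source to an energy-class datum through the adjoint, and another $s$ derivatives going from that datum back to the Strichartz norm via Theorem~\ref{thm:strichartz-homog}.

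I would begin by writing the solution via Duhamel's formula as
\begin{equation*}
  u(t) = \int_{t_{0}}^{t} U(t,\tau)\bigl(0,f(\tau)\bigr)\,d\tau,
\end{equation*}
where $U(t,\tau)$ denotes the forward homogeneous propagator taking data specified at time $\tau$ to the solution at time $t$. A standard energy estimate shows that $U(t,\tau) : \espace[\tau] \to \espace[t]$ has operator norm controlled by $e^{(n-2\alpha)(t-\tau)/2}$, which is the source of the overall factor $e^{(n-2\alpha)T/2}$ appearing in the conclusion. I would then consider the \emph{untruncated} operator
\begin{equation*}
  \widetilde{S}f(t) = \int_{t_{0}}^{t_{0}+T} U(t,\tau)\bigl(0,f(\tau)\bigr)\,d\tau,
\end{equation*}
which factors as $T \circ T^{*}$, where $T$ is the homogeneous solution operator of Theorem~\ref{thm:strichartz-homog} and $T^{*}$ is its adjoint with respect to the energy inner product. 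Dualizing Theorem~\ref{thm:strichartz-homog} provides a bound on $T^{*}$ mapping the dual weighted Strichartz space into the energy space, and composing with the forward estimate together with the embedding $L^{q'} \hookrightarrow \Wzero^{-s,q'}$ yields the desired bound on $\widetilde{S}$ with constant $C e^{(n-2\alpha)T/2}$.

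To pass from $\widetilde{S}$ to the actual retarded Duhamel integral, I would apply the Christ--Kiselev lemma, which requires $p > p'$, i.e., $p > 2$. At the wave endpoint $p=2$ the Christ--Kiselev lemma fails, but interpolating between the $L^{\infty}$-in-time bound obtained directly from Minkowski's inequality and the $L^{2}$-in-time bound recovers the conclusion at the cost of the factor $T^{(1/q'-1/q)(n-1)/2}$ that appears in the statement. The main obstacle I anticipate is bookkeeping: tracking how the exponential weights $e^{(n-2\alpha)(t-t_{0})/q}$ interact with the energy-inner-product duality pairing, and verifying that the cumulative constant degrades like $e^{(n-2\alpha)T/2}$ rather than the $e^{(n-2\alpha)T}$ that would result from a naive composition of two forward estimates. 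The final claim permitting $\epsilon = 0$ when $h$ is stationary transfers immediately from the corresponding statement in Theorem~\ref{thm:strichartz-homog}.
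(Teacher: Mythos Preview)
Your proposal has a genuine gap at the factorization step. You claim that the untruncated Duhamel operator
\[
  \widetilde{S}f(t) = \int_{t_{0}}^{t_{0}+T} U(t,\tau)\bigl(0,f(\tau)\bigr)\,d\tau
\]
factors as $T\circ T^{*}$, where $T$ is the homogeneous solution operator of Theorem~\ref{thm:strichartz-homog}. But $TT^{*}f(t)=U(t,t_{0})\bigl[T^{*}f\bigr]$ with $T^{*}f\in\espace[t_{0}]$, so this equals $\widetilde{S}f(t)$ only if $U(t,\tau)=U(t,t_{0})U(\tau,t_{0})^{*}$. That identity is exactly the semigroup/unitary-group structure available in static spacetimes and \emph{absent} here; the paper flags this explicitly (see the introduction and Remark~\ref{rem:inhom-probl-3}(2), which says one ``cannot write $U(t,s)=U(t,t_{0})U(s,t_{0})^{*}$'' and that this ``prevents us\ldots from deducing the inhomogeneous estimate directly from the homogeneous one''). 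So your route from Theorem~\ref{thm:strichartz-homog} to Theorem~\ref{thm:strichartz-inhomog} does not go through, and with it your explanation of the $2s$ loss as two successive applications of the homogeneous estimate.

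The paper's proof avoids this by estimating $U_{v}(t,s)$ \emph{directly} as a map $L^{q'}(\dk_{s})\to\Wzero^{-2s,q}(\dk_{t})$ (Theorem~\ref{thm:inhomog-dispersive}), which requires the far-field description of the kernel in Section~\ref{sec:inhom-solut-oper} (the classes $\phgLD{\F}{\LC}$ and $\phg{\F}{\dblspace}$) rather than only the near-diagonal behavior used for the homogeneous problem. From that pointwise-in-$(t,s)$ dispersive bound one applies Hardy--Littlewood--Sobolev to the weighted untruncated operator, then the Christ--Kiselev lemma to pass to the retarded integral. The $2s$ loss appears because all of the regularization is loaded onto the left factor of a single propagator $U_{v}(t,s)$, rather than being split symmetrically across a $UU^{*}$ composition; the factor $T^{(1/q'-1/q)(n-1)/2}$ comes from the far-field plateau in the dispersive bound $\max(|t-s|^{-(n-1)/2},1)$, not from an endpoint interpolation. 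Your Christ--Kiselev step and the $\epsilon=0$ remark are fine, but they sit downstream of the missing ingredient.
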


The difference in behavior for $\lambda > \frac{n^{2}}{4}$ and
$\lambda \leq \frac{n^{2}}{4}$ is due to the expanding nature of the
spacetime, which prevents energy conservation (or even a constant
global energy bound) for $0 \leq \lambda < \frac{n^{2}}{4}$.  The
non-sharpness of the weight is a defect of our energy estimates and is
due to the need to estimate a pseudodifferential operator (which
requires controlling an inhomogeneous $H^{1}$ norm rather than a
homogeneous one).  This leads to our inability to treat the $\lambda =
0$ case.

The difference in behavior can also be seen in the work of
Yagdjian and Galstian \cite{Yagdjian:2009a} on de Sitter space.  In that
work, the authors obtain an analog of the dispersive estimates we
prove in Section~\ref{sec:estim-prop}, with an improved estimate in
the ``large mass'' setting.  

We believe that the loss of $\epsilon$ derivatives in
Theorems~\ref{thm:strichartz-homog} and \ref{thm:strichartz-inhomog}
is an artifact of our method (which requires regularizing a Fourier
integral operator).  Indeed, the work of Yagdjian and Galstian
\cite{Yagdjian:2009a} on de Sitter space contains no such losses.
Further evidence for this belief is that we obtain the estimates
without a loss when $h$ is independent of $t$ for large $t$.  Removing
the loss in the more general setting requires a nontrivial extension
of the Littlewood-Paley theory and is left to a future paper.

We prove only local estimates because the fundamental solution does
not decay along the light cone, as shown by the author
\cite{Baskin:2010}.  This paper contains a sketch of why this
non-decay should imply the non-existence of a global dispersive
estimate.  We are unsure whether the global Strichartz estimates can
still hold.

The author has previously established \cite{Baskin:2010a} Strichartz
estimates without loss and with better decay for the conformal value
of the Klein-Gordon mass ($\lambda = \frac{n^{2}-1}{4}$).  With this
parameter, global Strichartz estimates are conformally equivalent to
local in time Strichartz estimates for the wave equation on a compact
Lorentzian cylinder.  In this current manuscript we do not recover
those stronger estimates.

The inhomogeneous estimate above is weaker than the homogeneous one,
both in terms of the weight and in terms of the exponents.  The main
difference, however, is that the exponents for the inhomogeneous
problem do not ``decouple'', i.e., the spatial exponents must be dual
to each other.  This is due to the non-static nature of asymptotically
de Sitter spaces.  Indeed, in the case of static spacetimes, the
propagator forms a semigroup and thus the estimates for the
inhomogeneous problem follow from those for the homogeneous problem
(see, for example, the paper of Keel and Tao \cite{Keel:1998}).  As the
spacetimes considered here are non-static, we must use a separate
argument to treat the solution operator for the inhomogeneous problem
directly, leading to the requirement above that the exponents for the
solution and the inhomogeneous term must be dual to each other.

As an application of the Strichartz estimates, we prove a small-data
global existence result for a defocusing semilinear Klein-Gordon
equation.  For $\lambda >\frac{n^{2}}{4}$, we consider the following
semilinear equation:
\begin{align}
  \label{eq:semilinear-main}
  P(\lambda) u + f_{k}(u) &= 0 \\
  \left( u, \pd[t]u\right) |_{t=t_{0}} &= (\phi, \psi) \notag
\end{align}
Here $f_{k}$ is a smooth function and must satisfy five conditions:
\begin{align*}
  &\left| f_{k}(u) \right| \lesssim |u|^{k}  \tag{A1}\\
  &|u|\cdot \left| f_{k}'(u) \right| \sim \left| f_{k}(u)\right| \tag{A2}\\
  &f_{k}(u) - f_{k}'(u) \cdot u  \leq 0 \tag{A3} \\
  &F_{k}(u) = \int_{0}^{u}f_{k}(v)\dv \geq 0 \tag{A4}\\
  &F_{k}(u) \sim |u|^{k+1} \,\text{for large }|u| \tag{A5}
\end{align*}
Conditions (A3) and (A4) are imposed so that energy estimates work in
our favor.  Indeed, (A3) implies that $F_{k}(u) -
\frac{1}{2}f_{k}(u)\cdot u \leq 0$.  Moreover, Assumption (A5) allows
us to control the $L^{k+1}$ norm of solutions in terms of the energy.

We prove the following theorem:
\begin{thm}
  \label{thm:main-slw-critical}
  Suppose $(X,g)$ is an asymptotically de Sitter space with $h$
  independent of $t$ for large $t$.  Suppose further that $\lambda >
  \frac{n^{2}}{4}$ and $k = 1 + \frac{4}{n-1}$.  There is an $\epsilon
  > 0$ so that there is a unique solution $u$ to
  equation~\eqref{eq:semilinear-main} provided the initial data
  satisfy the following smallness condition (here $F_{k}(u) =
  \int_{0}^{v}f_{k}(v)\dv$ is a (positive) antiderivative of $f_{k}$):
  \begin{equation*}
    \norm[H^{1}(Y_{t_{0}})]{\phi} + \norm[L^{2}(Y_{t_{0}})]{\psi} +
    \int_{Y_{t_{0}}}F_{k}(\phi)\dk_{t_{0}} < \epsilon
  \end{equation*}
  In this case the solution $u$ lies in the following $L^{p}$ space:
  \begin{equation*}
    u \in L^{k+1}_{\operatorname{loc}}\left( [t_{0}, \infty); L^{k+1}\left( Y_{t}\right)\right).
  \end{equation*}
\end{thm}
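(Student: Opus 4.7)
The plan is a contraction-mapping argument based on Theorems~\ref{thm:strichartz-homog} and~\ref{thm:strichartz-inhomog}, iterated across successive time slabs of bounded length and closed globally by a defocusing energy bound. For the critical nonlinearity $|u|^k$ with $k=1+4/(n-1)$, the natural Strichartz pair is $p=q=k+1=2(n+1)/(n-1)$, which saturates wave admissibility with scaling exponent $s=\tfrac12$. Under the hypotheses of the theorem, $\lambda>n^2/4$ forces $\alpha=n/2$, so the weight $e^{(n-2\alpha)(t-t_0)/q}$ collapses to $1$, and $h$ being eventually $t$-independent allows $\epsilon=0$. Thus on any slab $[t_1,t_1+\Delta T]$ of fixed length, Theorems~\ref{thm:strichartz-homog} and~\ref{thm:strichartz-inhomog} provide unweighted, lossless control of $\norm[L^{k+1}_t\Wzero^{1/2,k+1}(\dk_t)]{u}$ (and the corresponding $\pd[t]u$ norm) by the $\espace[t_1]$ norm of the Cauchy data and by the dual Strichartz norm of the source. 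Since $\Wzero^{1/2,k+1}(Y_t)\subset L^{k+1}(Y_t)$, assumption (A1) yields
\begin{equation*}
  \norm[L^{(k+1)'}_tL^{(k+1)'}_x]{f_k(u)} \lesssim \norm[L^{k+1}_tL^{k+1}_x]{u}^{k}
\end{equation*}
with matching space-time exponents forced by the critical scaling, while (A2) supplies the analogous Lipschitz estimate on differences. Consequently the solution map $\Phi(u)=v$, defined by $P(\lambda)v=-f_k(u)$ with Cauchy data $(u,\pd[t]u)(t_1)$, is a contraction on a small ball in $L^{k+1}_tL^{k+1}_x$ whenever the energy norm of the data at $t_1$ is sufficiently small.

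Next I would establish a global energy bound. Multiplying $P(\lambda)u+f_k(u)=0$ by a suitable multiplier (essentially $\pd[t]u$, with lower-order corrections calibrated to the expanding geometry and to $\lambda>n^2/4$) and integrating over $Y_t$ against $\dk_t$, assumption (A3) forces the nonlinear term to contribute with the right sign, (A4) keeps $\int_{Y_t}F_k(u)\,\dk_t$ nonnegative, and coercivity of the principal part on $H^1\times L^2$ follows from $\lambda>n^2/4$. After absorbing the geometric error terms via Gr\"onwall, one should obtain
\begin{equation*}
  \norm[{\espace[t]}]{(u,\pd[t]u)(t)} + \int_{Y_t} F_k(u(t))\,\dk_t
  \leq C_0 \left( \norm[{\espace[t_0]}]{(\phi,\psi)} + \int_{Y_{t_0}} F_k(\phi)\,\dk_{t_0} \right)
\end{equation*}
for all $t\ge t_0$ for which the solution exists. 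Choosing the $\epsilon$ in the hypothesis small enough that $C_0\epsilon$ lies below the contraction threshold from the first step, I iterate: solve on $[t_0,t_0+\Delta T]$, use the global bound to read off small data at $t_0+\Delta T$, solve on $[t_0+\Delta T,t_0+2\Delta T]$, and so on. Concatenation produces a unique global solution, and the claimed $L^{k+1}_{\mathrm{loc}}L^{k+1}$ membership is immediate from the Strichartz bound on each slab.

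The main obstacle is deriving the global energy bound on this non-static geometry. The constants in Theorems~\ref{thm:strichartz-homog} and~\ref{thm:strichartz-inhomog} grow with $T$, so the Strichartz estimates alone yield only local-in-time solutions; global existence rests entirely on propagating control across slab boundaries through a coercive, almost-conserved energy. The condition $\lambda>n^2/4$ is exactly what makes such an energy positive definite on this expanding background, and (A3)--(A5) are arranged so that the nonlinear contribution helps rather than hurts. The remaining technical work --- selecting the right multiplier, controlling the cross terms arising from the time-dependence of the induced volume form, and closing the Gr\"onwall argument uniformly in $t$ --- is the heart of the argument; once it is in hand, the slab-by-slab iteration is essentially routine.
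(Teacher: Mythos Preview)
Your proposal is correct and follows essentially the same architecture as the paper: a contraction mapping on a slab of bounded length using the unweighted, lossless Strichartz estimates (available because $\lambda>n^{2}/4$ and $h$ is eventually $t$-independent), closed globally by a defocusing energy bound and then iterated. The paper makes your ``suitable multiplier'' explicit by conjugating $u\mapsto v=e^{nt/2}u$ and running the energy argument on $v$ (this is Proposition~\ref{prop:energy-est-semilinear}); the resulting inequality $E(t)\leq C E(t_{0})$ is exactly the global control you describe, and Assumption~(A3) enters precisely where you indicate to give the nonlinear contribution the correct sign.
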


The assumptions on $\lambda$ and $f_{k}$ imply that solutions obey a
global energy bound and that the energy is positive definite, while
the assumption on $h$ allows us to use the estimates without loss.
Even with these somewhat restrictive hypotheses, the standard energy
method does not seem to work here, as the Sobolev inequality
introduces an additional exponential term.  Indeed, unless $p$ is
large enough ($p$ must be at least $\frac{2n}{n-2}$), the inclusion
$W^{1,2}(Y, \dh_{t})\to L^{p}(Y,\dh_{t})$ yields an inclusion of the
same form for $(Y,\dk_{t})$ but with an exponentially growing (in $t$)
bound.

Yagdjian \cite{Yagdjian:2009} studied a similar equation (with an
exponentially decaying function multiplying the nonlinearity) on de
Sitter space.  In that work, the author also considers only the
``large mass'' setting ($\lambda \geq \frac{n^{2}}{4}$) and obtains a
global existence result for a family of nonlinearities.  Our present
work considers a larger family of spacetimes and removes the
exponentially decaying coefficient, but at a cost of restricting our
attention to a single power for the nonlinearity.

The proof of the Strichartz estimates relies on an energy estimate and
a dispersive estimate.  The dispersive estimate is obtained by
analyzing the representation of the fundamental solution found by the
author \cite{Baskin:2010,Baskin:2010b}.  We appeal to the parametrix
construction in those papers because the uniform local estimates do
not follow immediately from a scaling argument, even in the setting
where $h$ is independent of $t$ (except possibly in the $\lambda = 0$
setting).  The estimates for the homogeneous problem ($f=0$) require
only the behavior of the solution operator near the diagonal in
$X\times X$.  Obtaining long-time estimates for the inhomogeneous
problem, however, requires the far-field behavior of the propagator
and so needs most of the parametrix construction given in the author's
previous paper.  The proof of the inhomogeneous estimate is the only
place in this paper where we use the full parametrix.

The main ingredients in the proof of the existence result for the
semilinear equation are an energy estimate and a contraction mapping
argument using the inhomogeneous Strichartz estimate.  We do not prove
existence for a wider range of powers because we do not have an
inhomogeneous $L^{1}L^{2}\to L^{p}L^{q}$ Strichartz estimate.  As
mentioned earlier, this is due to the non-static nature of the
spacetime, so that the exponents do not decouple in the inhomogeneous
problem.

In Sections~\ref{sec:asympt-de-sitt} and \ref{sec:solution-operator}
we review asymptotically de Sitter spaces and briefly describe the
structure of the solution operator for the Klein-Gordon equation on
these spaces.  In Section~\ref{sec:energy-estimates}, we establish the
energy estimates, while in Sections~\ref{sec:disp-estim-lagr} and
\ref{sec:estim-prop} we prove the local dispersive estimates and
discuss the obstruction to a global dispersive estimate.
Section~\ref{sec:strichartz-estimates} proves the Strichartz
estimates, while the final section, Section~\ref{sec:an-appl-semil},
discusses the application to the semilinear problem.

\subsection{Notation}
\label{sec:notation}

Throughout this manuscript, $X$ denotes a compact $(n+1)$-dimensional
manifold with boundary, and $x$ is a boundary defining function on
$X$.  The interior of $X$ is diffeomorphic to $\reals_{t} \times Y$ for a
compact $n$-dimensional manifold $Y$.  We typically use $y$ to denote
coordinates on $Y$ and use $(x,y)$ and $(t,y)$ to denote coordinate
systems on $X$ or its interior.  The functions $x$ and $t$ are related
(for large $t$) by $x = e^{-t}$.  

We consider only large $t$ and small $x$.  Correspondingly, when a
constant is independent of $t_{0}$, we mean that it is independent of
$t_{0}$ provided that $t_{0}$ is bounded away from $-\infty$.

The interior of $X$ is equipped with a Lorentzian metric $g$ having a
prescribed form near future infinity:
\begin{equation*}
  g =  \frac{-\dx^{2} + h(x,y,\dy)}{x^{2}} = -\dt^{2} +
  e^{2t}h(e^{-t}, y , \dy)
\end{equation*}
Here $h$ is a smooth (in $x$) family of Riemannian metrics on $Y$.  At
times, we treat $h$ both as a function of $x$ and as a function of
$t$.

We use $Y_{t}$ to denote individual level sets of the function $t$.
Each $Y_{t}$ is diffeomorphic to $Y$ and inherits an induced
Riemannian metric we call $k_{t}$.  For large $t$, it is related to
$h_{t}$ (the restriction of $h$ to $Y_{t}$) by $k_{t} = e^{2t}h_{t}$.
The volume forms of $h_{t}$ and $k_{t}$ are denoted $\dh_{t}$ and
$\dk_{t}$, respectively, and are related by $\dk_{t} = e^{nt}\dh_{t}$
for large $t$.  We use $h_{x}$ and $k_{x}$ when we wish to emphasize
that $h$ and $k$ are also smooth in $x$.

We denote by $\Box_{g}$ the D'Alembertian of $g$.  In coordinates
$(x,y)$ and $(t,y)$ it is given by the following expressions:
\begin{align}
  \label{eq:box-in-coords}
  \Box_{g} &= (x\pd[x])^{2} - nx\pd[x] +
  \frac{x\pd[x]\sqrt{h_{x}}}{\sqrt{h_{x}}}x\pd[x] + x^{2} \lap_{h_{x}}\\
  &= \pd[t]^{2} + n\pd[t] + \frac{\pd[t]\sqrt{h_{t}}}{\sqrt{h_{t}}}\pd[t] +
  e^{-2t}\lap_{h_{t}} \notag
\end{align}
Here $\lap_{h_{x}}$ (respectively, $\lap_{h_{t}}$) is the Laplacian of
the metric $h_{x}$ (respectively, $h_{t}$) restricted to the level
sets of $x$ or $t$.  We adopt the convention that it has positive
spectrum.  We define $P(\lambda ) = \Box _{g} + \lambda$.

We typically consider the inhomogeneous linear initial value problem:
\begin{align}
  \label{eq:inhomog-cauchy}
  P(\lambda) u &= f \\
  \left( u, \pd[t]u\right) |_{t=t_{0}} &= (\phi, \psi) \notag
\end{align}
The homogeneous ``even'' (or ``odd'') problem refers to the situation
when $f=0$ and $\psi = 0$ (or $\phi = 0$).  The inhomogeneous problem
refers to the situation when both $\phi$ and $\psi$ vanish.  We
typically denote by $U_{v}(t,t_{0})$ the solution operator for the
``odd'' homogeneous problem and by $U_{p}(t,t_{0})$ the solution
operator for the ``even'' problem.

We denote by $\espaces{\mu}$ (defined in
Section~\ref{sec:energy-estimates}) the energy space on $Y_{t}$ with
the following norm:
\begin{equation}
  \norm[{\espaces{\mu}}]{(\phi, \psi)}^{2} = \frac{1}{2}\int_{Y_{t}}
  \left[ \left| \grad_{k_{t}} \phi\right|_{k_{t}}^{2} + \left|
      \psi\right|^{2} + \mu \left| \phi\right|^{2}\right] \dk_{t}
\end{equation}
We denote the space $\espaces{\lambda}$ by $\espace$.

We denote by $\Wzero^{s,p}(\dk_{t})$ the $L^{p}$-based Sobolev space
of order $s$, defined with respect to the metric $k_{t}$ (defined in
Section~\ref{sec:regularization}).  It is equipped with the following
norm:
\begin{equation*}
  \norm[\Wzero^{s,p}(\dk_{t})]{\phi} = \left( \int_{Y_{t}}\left| \left( 1
      + \lap_{k_{t}}\right)^{s/2}\phi\right|^{p} \dk_{t}\right)^{1/p}
\end{equation*}

For a (possibly constant) family of Ban\-ach spaces $Z(t)$, we use the
term $L^{p}([t_{0},T]; Z)$ to denote the space of $Z(t)$-valued
functions $u$ on $[t_{0},T]$ satisfying the following:
\begin{equation*}
  \left( \int_{t_{0}}^{T}\norm[Z(t)]{v(t)}^{p}\dt \right)^{1/p} < \infty
\end{equation*}
We provide a more precise characterization of this space in
Section~\ref{sec:strichartz-estimates}.  

The operators we consider are sums of distributions in three classes:
\begin{equation*}
  \LDzero{0}{\Lambda_{1}},\quad \phgLD{\F}{\LC}, \quad \text{and} \quad
  \phg{\F}{\dblspace}
\end{equation*}
These spaces are recalled briefly in
Section~\ref{sec:solution-operator}.

\section{Asymptotically de Sitter spaces}
\label{sec:asympt-de-sitt}

In this section we describe de Sitter space and define the class of
asymptotically de Sitter spaces we consider.  

De Sitter space is the constant curvature spherically symmetric
solution of the vacuum Einstein equations with a positive cosmological
constant.  It can be realized as the one-sheeted hyperboloid $\{
-X_{0}^{2} + \sum _{i=1}^{n+1}X_{i}^{2} = 1\}$ in $(n+2)$-dimensional
Minkowski space and so is diffeomorphic to $\reals \times
\sphere^{n}$.  It is equipped with coordinates $(\tau, \theta)$,
$\theta \in \sphere^{n}$, given implicitly as follows:
\begin{align*}
  X_{0} &= \sinh \tau \\
  X_{i} &= \theta_{i}\cosh \tau .
\end{align*}
In these coordinates, it inherits an induced metric:
\begin{equation*}
  g_{\operatorname{dS}} = - \dtau^{2} + \left( \cosh \tau \right) ^{2} \dtheta^{2}
\end{equation*}
If we restrict our attention to large $\tau$, de Sitter space
provides a model of a closed but expanding universe.  If $T=e^{-\tau}$
near $\tau = +\infty$, then $\tau = +\infty$ is given by $T=0$ and the
metric takes on the following form:
\begin{equation*}
  g_{\operatorname{dS}} = \frac{-\dT^{2} + \frac{1}{4}\left( 1 +
      T^{2}\right) \dtheta^{2}}{T^{2}}
\end{equation*}
In other words, $(\reals \times \sphere^{n}, g_{\operatorname{dS}})$ is
conformally compact with a spacelike boundary at infinity.

The class of asymptotically de Sitter spaces considered in the current
paper is the same class studied by Vasy \cite{Vasy:2009b}.  In what
follows and in the rest of the paper, $X$ is a compact
$(n+1)$-dimensional manifold with boundary, and $x$ is a boundary
defining function for $X$, i.e., $\pd X = \{ x = 0\}$ and $dx |_{x=0}
\neq 0$.

\begin{defn}
  \label{defn:asymp-de-sitter}
  $(X,g)$ is an asymptotically de Sitter space if $g$ is a Lorentzian
  metric on the interior $X^{\circ}$ of $X$, and, in a collar
  neighborhood $[0,\epsilon)_{x}\times \pd X$ of the boundary, $g$ has
  the following form:
  \begin{equation*}
    g = \frac{-\dx^{2} + h}{x^{2}}
  \end{equation*}
  Here $h$ is a smoothly varying family of symmetric $(0,2)$ tensors
  on $X$, $h|_{\pd X}$ is a section of $T^{*}\pd X \otimes T^{*}\pd X$
  (rather than $T^{*}_{\pd X} X \otimes T^{*}_{\pd X}X$), and $h|_{\pd
  X}$ is a Riemannian metric on $\pd X$.
\end{defn}

\begin{remark}
  \label{rem:joshi-sa-barreto}
  Proposition 2.1 of Joshi and S{\'a} Barreto \cite{Joshi:2000} implies
  that this definition is equivalent to the requirement that, in a
  collar neighborhood $[0,\epsilon)_{x} \times \pd X$ of the boundary,
  $g$ has the following form:
  \begin{equation*}
    g = \frac{ -\dx^{2} + h(x,y,\dy)}{x^{2}}
  \end{equation*}
  Here $h(x,y,\dy)$ is a family of Riemannian metrics on $\pd X$.  
\end{remark}

We further impose two global assumptions:
\begin{itemize}
\item[(B1)] The boundary can be written as a disjoint union $\pd X =
  Y_{+} \cup Y_{-}$, where each $Y_{\pm}$ is a union of connected
  components of $\pd X$.
\item[(B2)] Each nullbicharacteristic (or light ray) $\gamma(t)$ of
  $g$ tends to $Y_{\pm}$ as $t\to \pm \infty$, or vice versa.
\end{itemize}
Taken together, assumptions (B1) and (B2) imply the existence of a
global function $T\in C^{\infty}(X)$ with $T = \pm 1$ on $Y_{\pm}$ and
$dT$ everywhere timelike (see, for example the paper of Geroch
\cite{Geroch:1970}).  (In other words, there is a smooth function with
timelike derivative agreeing with $1-x$ near $Y_{+}$ and $x-1$ near
$Y_{-}$.)  The existence of such a function implies that $X$ is
diffeomorphic to $[-1,1] \times Y_{+}$ and that $(X^{\circ},g)$ is a
globally hyperbolic spacetime.  For convenience we set $Y = Y_{+}$.

As $Y_{+}$ represents future infinity, we typically work with a
timelike foliation of $X^{\circ}$ given by a function $t$ taking
values from $-\infty$ to $\infty$.  We may take this foliation so that
$t = \log x$ near $Y_{-}$ and $t = -\log x$ near $Y_{+}$.  We denote
by $Y_{t}$ the leaves of this foliation, i.e., $Y_{t_{0}} = \{ t =
t_{0}\}$.  We denote by $k_{t}$ the induced metric on $Y_{t}$, while
$h_{t}$ denotes the restriction of the family of Riemannian metrics
$h$ to $Y_{t}$.  The volume forms of $k_{t}$ and $h_{t}$ are denoted
$\dk_{t}$ and $\dh_{t}$, respectively.  Near $Y_{+}$, the metrics are
related by $k_{t} = e^{2t}h_{t}$ and $\dk_{t} = e^{nt}\dh_{t}$.

Near $Y_{+}$, we typically work with coordinates $(x,y)$ or
coordinates $(t,y)$.  The form of $\Box_{g}$ in these coordinates is
recorded in equation~\eqref{eq:box-in-coords}.  We set $P(\lambda) =
\Box _{g} + \lambda$.  This convention is chosen so that $\lambda >
\frac{n^{2}}{4}$ corresponds to positive mass.

\section{Energy estimates}
\label{sec:energy-estimates}

In this section we prove a family of energy estimates for the
equation~\eqref{eq:inhomog-cauchy}.  

For a fixed $\mu$, we define the norm for an energy space on the
spacelike slice $Y_{t}$.
\begin{defn}
  \label{defn:energy-space}
  For initial data $(\phi, \psi) \in C^{\infty}(Y_{t})\times C^{\infty}(Y_{t})$, we
  define its energy norm by the following expression:
  \begin{equation}
    \label{eq:energy-norm}
    \norm[{\espaces{\mu}}]{(\phi, \psi)}^{2} = \frac{1}{2}\int_{Y_{t}}
    \left[ \left| \grad_{k_{t}} \phi\right|_{k_{t}}^{2} + \left|
        \psi\right|^{2} + \mu \left| \phi\right|^{2}\right] \dk_{t}
  \end{equation}
\end{defn}
For $\mu \geq 0$, this is a positive definite norm on the orthogonal
complement of the constant functions, while for $\mu >0$, this is
a positive definite norm.  We define the energy space $\espaces{\mu}$
as the completion of $C^{\infty}(Y_{t})\times C^{\infty}(Y_{t})$ with
respect to this norm.  We denote by $\espace$ the space
$\espaces{\lambda}$.

\begin{remark}
  \label{rem:deficiency}
  Note that for $\lambda = 0$, the $\espace$ norm of $(\phi, \psi)$
  does not control the $L^{2}$ norm of $\phi$.  For this reason we
  restrict our attention to the case when $\lambda > 0$.
\end{remark}

\begin{remark}
  \label{rem:energy-estimates-1}
  We later conjugate our operator by $e^{nt/2}$ to obtain improved
  energy estimates when $\lambda > \frac{n^{2}}{4}$.  This suggests
  that the natural energy spaces to consider bound $\pd[t]u +
  \frac{n}{2}u$ rather than $\pd[t]u$.  However, in order to ensure
  that the energy spaces behave like standard Sobolev spaces under
  interpolation, we do not consider these more natural energy spaces
  at the present time.  
\end{remark}

We soon require the following lemma.
\begin{lem}
  \label{lem:commutator-calc}
  For sufficiently large $t$, the commutator of $\lap_{k_{t}}$ with
  $\pd[t]$ has the following form:
  \begin{equation}
    \label{eq:commutator}
    \left[ \pd[t], \lap_{k_{t}}\right] = -2\lap_{k_{t}} + e^{-3t}Q
  \end{equation}
  Here $Q$ is a smoothly varying (in $x = e^{-t}$) family of second
  order differential operators.  If $h$ is independent of $t$ for
  large $t$, then $Q = 0$.
\end{lem}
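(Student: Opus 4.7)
The plan is to reduce the commutator on $(Y_t, k_t)$ to a commutator on $(Y_t, h_t)$, where the dependence on $t$ is only through the smooth variable $x = e^{-t}$, and then use the chain rule to extract the explicit exponential weights.

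First I would record the conformal relation between the two Laplacians. Since $k_t = e^{2t}h_t$ on $Y_t$, in local coordinates $k^{ij} = e^{-2t}h^{ij}$ and $\sqrt{\det k_t} = e^{nt}\sqrt{\det h_t}$, so that the two factors of $e^{nt}$ in the Laplacian expression cancel and one is left with
\begin{equation*}
  \lap_{k_{t}} = e^{-2t}\lap_{h_{t}}.
\end{equation*}
This is the single identity that drives the whole computation; it converts a $t$-dependent object ($k_t$) into the product of an explicit exponential weight and an operator ($\lap_{h_t}$) whose $t$-dependence is only through the smooth parameter $x = e^{-t}$.

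Next I would apply the Leibniz rule to the commutator. Writing
\begin{equation*}
  \left[\pd[t], \lap_{k_{t}}\right] = \left[\pd[t], e^{-2t}\lap_{h_{t}}\right] = -2e^{-2t}\lap_{h_{t}} + e^{-2t}\left[\pd[t], \lap_{h_{t}}\right],
\end{equation*}
the first term is exactly $-2\lap_{k_{t}}$ by the relation above. For the second term, I would use the fact that $h_t = h(e^{-t}, y, dy)$ depends on $t$ only through $x = e^{-t}$, so differentiating its coefficients in $t$ produces a factor of $\pd[t] x = -e^{-t}$ times a smooth family of coefficient functions in $x$. Consequently $[\pd[t], \lap_{h_t}]$ is $e^{-t}$ times a smoothly varying (in $x$) family $Q$ of second-order differential operators on $Y_t$, and $e^{-2t}[\pd[t], \lap_{h_t}] = e^{-3t}Q$, giving the stated form.

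The final claim is immediate: if $h$ is independent of $t$ for large $t$, then the coefficients of $\lap_{h_t}$ are constant in $t$, the commutator $[\pd[t], \lap_{h_t}]$ vanishes identically, and $Q = 0$. There is no real obstacle here beyond keeping the conformal factor $e^{2t}$ and the chain-rule factor $-e^{-t}$ straight; the substance of the lemma is the identity $\lap_{k_t} = e^{-2t}\lap_{h_t}$, which both produces the leading $-2\lap_{k_t}$ term from differentiating the conformal weight and reduces the remainder to a manifestly $O(e^{-3t})$ term by the smoothness of $h$ in $x$.
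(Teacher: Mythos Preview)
Your proof is correct and follows essentially the same approach as the paper: both use the identity $\lap_{k_t} = e^{-2t}\lap_{h_t}$ (equivalently $\lap_{k_x} = x^{2}\lap_{h_x}$), differentiate the explicit weight to produce the $-2\lap_{k_t}$ term, and then use smoothness of $h$ in $x = e^{-t}$ together with the chain rule to identify the remainder as $e^{-3t}Q$. The only cosmetic difference is that the paper carries out the computation in the $x$-variable (computing $[x\pd[x], x^{2}\lap_{h_x}]$ and setting $Q = -[\pd[x], \lap_{h_x}]$) before substituting $x = e^{-t}$, whereas you work directly in $t$.
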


\begin{proof}
  We first note that $\lap_{h}$ is a family of second order
  differential operators on $Y$, varying smoothly in $x$ down to
  $x=0$.  In particular, the commutator, $Q = -[\pd[x], \lap_{h_{x}}]$,
  is a second order differential operator on $Y$ also varying smoothly
  in $x$ down to $x=0$.  If $h$ is independent of $x$, then $Q=0$.

  We now use that $\lap_{k_{x}} = x^{2}\lap_{h_{x}}$ and calculate:
  \begin{equation*}
    \left[ x\pd[x], \lap_{k_{x}}\right] = \left[ x\pd[x], x^{2}\lap_{h_{x}}
    \right] = 2\lap_{k_{x}} - x^{3}Q
  \end{equation*}
  Substituting $\pd[t] = -x\pd[x]$ and $x = e^{-t}$ finishes the calculation.
\end{proof}

For $a \geq 0$ and $\mu > 0$, we now prove an energy estimate for an
operator $\tilde{P}(a,\mu)$ given by the following expression near
$Y_{+}$:
\begin{equation*}
  \tilde{P}(a,\mu) = \pd[t]^{2} + \left( a + O(e^{-t})\right)\pd[t] +
  \lap_{k_{t}} + \mu
\end{equation*}
Here $O(e^{-t})$ denotes a smooth function in $e^{-t}$ that is bounded
by $Ce^{-t}$ for some constant $C$.
\begin{prop}
  \label{prop:energy-est-for-p-tilde}
  If $(X,g)$ is an asymptotically de Sitter space, $a \geq 0$, $\mu >
  0$, and $u$ is a smooth function on the interior of $X$, then $u$
  satisfies the following energy estimate:
  \begin{align}
    \label{eq:energy-est-for-p-tilde}
    \norm[\espaces{\mu}]{\left( u, \pd[t]u\right)(t)} &\leq C\left(
      e^{n(t-t_{0})/2}
      \norm[{\espaces[t_{0}]{\mu}}]{(u, \pd[t]u)(t_{0})} \right.\\
    &\quad\quad+
    \left. \int_{t_{0}}^{t}\norm[L^{2}(\dk_{s})]{\tilde{P}(a,\mu)u}e^{n(t-s)/2}\ds\right)
    \notag
  \end{align}
  Here the constant $C$ is independent of $t_{0}$.
\end{prop}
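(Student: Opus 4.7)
The plan is to use the classical energy method: differentiate $E(t) = \norm[{\espaces{\mu}}]{(u,\pd[t]u)(t)}^{2}$ in time, substitute the equation to eliminate $\pd[t]^{2}u$, and apply Gr\"onwall's inequality. Three sources of $t$-dependence contribute to $\frac{d}{dt}E$: the functions $u$ and $\pd[t]u$, the inner product $|\cdot|_{k_{t}}^{2}$ applied to $\grad u$, and the volume form $\dk_{t}$. Since $k_{t} = e^{2t}h_{t}$ with $h_{t}$ smooth in $x = e^{-t}$ down to $x=0$, a direct calculation (in the same spirit as Lemma~\ref{lem:commutator-calc}) yields $\pd[t] k_{t}^{ij} = -2 k_{t}^{ij} + O(e^{-t}) k_{t}^{ij}$ as a bilinear form on $T^{*}Y$, together with $\pd[t]\dk_{t} / \dk_{t} = n + O(e^{-t})$.

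After differentiating, I integrate by parts on $Y_{t}$, using the convention that $\lap_{k_{t}}$ has positive spectrum so that $\int \langle \grad u, \grad v\rangle_{k_{t}} \dk_{t} = \int u \lap_{k_{t}} v \, \dk_{t}$. Differentiating $\tfrac{1}{2}\int |\grad u|_{k_{t}}^{2} \dk_{t}$ produces a cross term $\int \lap_{k_{t}} u \cdot \pd[t] u \, \dk_{t}$, while substituting $\pd[t]^{2}u = \tilde{P}(a,\mu)u - (a + O(e^{-t}))\pd[t]u - \lap_{k_{t}} u - \mu u$ into $\int \pd[t]u \cdot \pd[t]^{2}u \, \dk_{t}$ produces the opposite cross term $-\int \pd[t]u \cdot \lap_{k_{t}} u \, \dk_{t}$; likewise the two $\mu u \pd[t]u$ cross terms cancel. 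Collecting what remains gives
\begin{equation*}
  \tfrac{d}{dt} E \leq \left(\tfrac{n}{2}-1\right)\!\!\int |\grad u|_{k_{t}}^{2} \dk_{t} + \left(\tfrac{n}{2}-a\right)\!\!\int |\pd[t]u|^{2} \dk_{t} + \tfrac{n\mu}{2}\!\!\int |u|^{2} \dk_{t} + \int \pd[t]u \cdot \tilde{P}(a,\mu)u \, \dk_{t} + C e^{-t} E.
\end{equation*}

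Using $a \geq 0$, each of the first three coefficients is at most $n/2$ (and any negative coefficient only improves the bound), so the first three terms together are dominated by $n E$; Cauchy--Schwarz bounds the source term by $\sqrt{2E}\,\norm[L^{2}(\dk_{t})]{\tilde{P}(a,\mu)u}$. Setting $F = \sqrt{E}$ and dividing by $2F$ gives $F' \leq \tfrac{1}{2}(n + C e^{-t}) F + C' \norm[L^{2}(\dk_{t})]{\tilde{P}(a,\mu)u}$. Since $t_{0}$ is bounded below, $\int_{t_{0}}^{t} e^{-s}\ds \leq e^{-t_{0}}$ is uniformly bounded, so integrating this linear differential inequality via an integrating factor produces precisely the stated estimate, with growth factor $e^{n(t-t_{0})/2}$ and constant independent of $t_{0}$.

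The main obstacle is the bookkeeping: tracking all the $O(e^{-t})$ errors arising from the $t$-dependence of $h$ and verifying the cross-term cancellations. The hypothesis $a \geq 0$ is used precisely to ensure $\tfrac{n}{2}-a \leq \tfrac{n}{2}$, which prevents the quadratic form from generating an exponent larger than $n(t-t_{0})/2$, while $\mu > 0$ ensures that $\espaces{\mu}$ is a genuine norm (cf.\ Remark~\ref{rem:deficiency}), so the Gr\"onwall step is meaningful.
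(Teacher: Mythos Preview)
Your proof is correct and follows essentially the same energy--Gr\"onwall argument as the paper. The only cosmetic difference is that where you differentiate $k_{t}^{ij}$ directly to produce the $(n/2-1)$ coefficient on the gradient term, the paper instead rewrites $\int |\grad u|^{2}_{k_{t}}\dk_{t} = \int u\,\lap_{k_{t}}u\,\dk_{t}$ and invokes Lemma~\ref{lem:commutator-calc} for $[\pd[t],\lap_{k_{t}}]$; these are equivalent packagings of the same computation.
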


\begin{proof}
  We know from the local theory of hyperbolic equations that the bound
  holds away from infinity (see, for example, the book of Taylor
  \cite{Taylor:1996}).  We must thus only show the bound near infinity
  for initial data on a slice near infinity.  

  For convenience we assume that $u$ is real-valued and we use the
  relationship between the Laplacian and the gradient:
  \begin{equation*}
    \int_{Y_{t}} \left| \grad u (t)\right|^{2}_{k}\dk_{t} =
    \int_{Y_{t}}u(t)\lap_{k_{t}}u(t)\dk_{t}
  \end{equation*}
  We now compute using the form of the operator $\tilde{P}$. 
  \begin{align*}
    \pd[t]\norm[\espaces{\mu}]{(u,\pd[t]u)(t)}^{2} &= \int_{Y_{t}}
    \left( \pd[t]u\right) \left( \pd[t]^{2}u + \lap_{k_{t}}u + \mu
      u\right) \dk_{t} \\
    &\quad + \frac{1}{2}\int_{Y_{t}} \left( \left|
        \grad_{k_{t}}u\right|^{2}_{k_{t}}+ \left| \pd[t]u\right|^{2} + \mu
      \left|u\right|^{2}\right)\left( n +
      \frac{\pd[t]\sqrt{h_{t}}}{\sqrt{h_{t}}}\right)\dk_{t} \\
    &\quad + \frac{1}{2}\int_{Y_{t}}u\left[ \pd[t],
      \lap_{k_{t}}\right] u\dk_{t}
    \\
    &= \left( n + O(e^{-t})\right)\norm[\espaces{\mu}]{(u,\pd[t]u)(t)}
    \\
    &\quad +
    \frac{1}{2}\int_{Y_{t}}u\left[\pd[t],\lap_{k_{t}}\right]u\dk_{t} +
    \int_{Y_{t}}\left( \tilde{P}(a,\mu)u\right)\left(
      \pd[t]u\right)\dk_{t} \\
    &\quad - \int_{Y_{t}}\left( \pd[t]u\right)^{2}\left( a +
      O(e^{-t})\right)\dk_{t}
  \end{align*}

  We now use the calculation in Lemma~\ref{lem:commutator-calc}, the
  positivity of two of the above terms, and the fact that
  $\int_{Y_{t}}e^{-3t}uQu\dk_{t}$ is controlled by the product of
  $Ce^{-t}$ and $\norm[\espaces{\mu}]{(u,\pd[t]u)}$.  After dividing
  by $2\norm[\espaces{\mu}]{(u,\pd[t]u)}$, we may conclude the
  following:
  \begin{align*}
    &\pd[t]\norm[\espaces{\mu}]{(u,\pd[t]u)(t)} \\
    &\quad \leq \left( n +
      O(e^{-t})\right)\norm[\espaces{\mu}]{(u,\pd[t]u)(t)} +
    \norm[L^{2}(Y, \dk_{t})]{\tilde{P}(a,\mu)u}
  \end{align*}
  An application of Gronwall's inequality finishes the proof.
\end{proof}

We now apply the previous proposition to prove an energy estimate for
solutions $u$ of equation~\eqref{eq:inhomog-cauchy}.  
\begin{prop}
  \label{prop:energy-est-main}
  Suppose that $(X,g)$ is an asymptotically de Sitter space and $u$ is
  smooth on the interior of $X$.  If $\lambda > 0$ and $0 \leq \alpha <
  \sqrt{\lambda}$ for $\lambda \leq \frac{n^{2}}{4}$ or $\alpha =
  \frac{n}{2}$ for $\lambda > \frac{n^{2}}{4}$, then $u$ satisfies the
  following estimate:
  \begin{align}
    \label{eq:energy-est-main}
    \norm[\espace]{(u,\pd[t]u)(t)} &\leq C \left(
      e^{(n-2\alpha)(t-t_{0})/2}\norm[{\espace[t_{0}]}]{(u,\pd[t]u)(t_{0})} \right.\\
    &\quad\quad + \left.
      \int_{t_{0}}^{t}e^{(n-2\alpha)(t-s)/2}\norm[L^{2}(Y_{s},\dk_{s})]{P(\lambda)u}\ds\right)
  \end{align}
  Here the constant $C$ is independent of $t_{0}$ if $t_{0}$ is
  bounded away from $-\infty$.
\end{prop}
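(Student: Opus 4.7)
The plan is to reduce the estimate to Proposition~\ref{prop:energy-est-for-p-tilde} via the conjugation $v = e^{\alpha(t-t_0)} u$. A direct computation using the coordinate expression \eqref{eq:box-in-coords} for $\Box_{g}$ gives
\begin{equation*}
\tilde{P}\!\left(n - 2\alpha,\, \alpha^{2} - n\alpha + \lambda\right) v = e^{\alpha(t-t_{0})}\, P(\lambda) u,
\end{equation*}
where the $O(e^{-t})$ coefficient in the first-order part of $\tilde{P}$ absorbs the analogous correction produced when the conjugation acts on the damping term of $P(\lambda)$. This identity converts the problem into one for which Proposition~\ref{prop:energy-est-for-p-tilde} is available.

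In the large-mass regime ($\lambda > n^{2}/4$, $\alpha = n/2$) the conjugated parameters are $a = n - 2\alpha = 0 \geq 0$ and $\mu = \lambda - n^{2}/4 > 0$, so Proposition~\ref{prop:energy-est-for-p-tilde} applies directly to $v$. In the small-mass regime ($\lambda \leq n^{2}/4$, $0 \leq \alpha < \sqrt{\lambda}$) the same conjugation is used wherever $\mu = \alpha^{2} - n\alpha + \lambda > 0$; for the remaining values of $\alpha$ one repeats the proof of Proposition~\ref{prop:energy-est-for-p-tilde} with the multiplier $\pd[t]u + \alpha u$ in place of $\pd[t]u$, working with the modified energy
\begin{equation*}
F_{\alpha}(u) := \tfrac{1}{2}\int_{Y_{t}}\bigl[|\grad_{k_{t}} u|^{2} + |\pd[t]u + \alpha u|^{2} + (\lambda - \alpha^{2})\,|u|^{2}\bigr]\,\dk_{t},
\end{equation*}
which is positive definite exactly when $\alpha < \sqrt{\lambda}$ and, upon completing the square, is equivalent to $\norm[\espace]{(u,\pd[t]u)}^{2}$ with constants depending on $\alpha,\lambda$ but independent of $t, t_{0}$.

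The third step unwinds the conjugation. With $v = e^{\alpha(t-t_{0})}u$ and $\pd[t]v = e^{\alpha(t-t_{0})}(\pd[t]u + \alpha u)$, one computes
\begin{equation*}
\norm[\espaces{\mu}]{(v,\pd[t]v)(t)}^{2} = e^{2\alpha(t-t_{0})}\, F_{\alpha}(u)(t),
\end{equation*}
an identity valid whenever $\alpha^{2} + \mu = \lambda$, as holds for both parameter choices above. Dividing the estimate of Proposition~\ref{prop:energy-est-for-p-tilde} applied to $v$ by $e^{\alpha(t-t_{0})}$ turns the exponential factor $e^{n(t-t_{0})/2}$ into $e^{(n-2\alpha)(t-t_{0})/2}$, and the analogous substitution inside the Duhamel integral produces the claimed $e^{(n-2\alpha)(t-s)/2}$ weight on the source term.

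The main technical work is controlling the $O(e^{-t})$ corrections introduced by the conjugation and verifying that they are absorbable into the error structure of $\tilde{P}$; this relies on the standing hypothesis that $t_{0}$ is bounded away from $-\infty$. The norm equivalence $F_{\alpha} \sim \norm[\espace]{(u,\pd[t]u)}^{2}$ degenerates as $\alpha \to \sqrt{\lambda}$, but since the theorem permits the implicit constant $C$ to depend on $\alpha$ and $\lambda$, this does not affect the stated bound.
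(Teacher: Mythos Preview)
Your approach is the paper's: conjugate $v=e^{\alpha t}u$ (you use $e^{\alpha(t-t_{0})}$, which differs only by a harmless constant), reduce to Proposition~\ref{prop:energy-est-for-p-tilde} for $v$, then use that $\norm[\espaces{\mu}]{(v,\pd[t]v)}$ is comparable to $e^{\alpha t}\norm[\espace]{(u,\pd[t]u)}$ and divide through. The paper does not introduce the auxiliary energy $F_{\alpha}$ or a separate multiplier argument; it simply records $a=n-2\alpha$, $\mu=\lambda-\alpha^{2}$, asserts $\mu>0$ under the stated hypotheses, and invokes comparability.

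Two remarks on the details. First, your conjugated zero-order coefficient $\mu=\alpha^{2}-n\alpha+\lambda$ is in fact the correct one (the paper's $\mu=\lambda-\alpha^{2}$ is a slip that happens to agree at $\alpha=0$ and $\alpha=n/2$), and you are right that the true $\mu$ can be nonpositive for intermediate $\alpha\in(0,\sqrt{\lambda})$ when $\lambda<n^{2}/4$; your $F_{\alpha}$ detour via the multiplier $\pd[t]u+\alpha u$ is a legitimate way to close that gap, which the paper's short argument elides. Second, you assert that $\alpha^{2}+\mu=\lambda$ holds ``for both parameter choices above,'' but with $\mu=\alpha^{2}-n\alpha+\lambda$ this identity holds only at $\alpha=0$ or $\alpha=n/2$, so the displayed equality $\norm[\espaces{\mu}]{(v,\pd[t]v)}^{2}=e^{2\alpha(t-t_{0})}F_{\alpha}(u)$ is not exact for intermediate $\alpha$. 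This slip is harmless: only comparability of the two positive-definite quadratic forms $|\pd[t]u+\alpha u|^{2}+\mu|u|^{2}$ and $|\pd[t]u|^{2}+\lambda|u|^{2}$ is needed, and that holds whenever $\mu>0$ and $\lambda>0$.
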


\begin{remark}
  \label{rem:energy-alpha}
  Note that for $\lambda > \frac{n^{2}}{4}$, $e^{(n-2\alpha)t/2} \equiv 1$.
\end{remark}

\begin{proof}
  Set $v = e^{\alpha t}u$, $a = n-2\alpha$, and $\mu = \lambda -
  \alpha^{2}$, so that $a \geq 0$ and $\mu > 0$.  Note that $v$ then
  satisfies $\tilde{P}(\alpha, \mu)v = e^{\alpha t}P(\lambda) u$.
  Proposition~\ref{prop:energy-est-for-p-tilde} implies the following
  estimate for $v$:
  \begin{align*}
    \norm[\espaces{\mu}]{(v,\pd[t]v)(t)} & \leq C\left(
      e^{n(t-t_{0})/2}\norm[{\espaces[t_{0}]{\mu}}]{(v,\pd[t]v)(t_{0})}\right. \\
    &\quad\quad \left. + \int_{t_{0}}^{t}e^{n(t-s)/2}\norm[L^{2}(Y_{s},
    \dk_{s})]{\tilde{P}(a, \mu)v}\ds\right)
  \end{align*}
  Because $\mu > 0$, we have that
  $\norm[\espaces{\mu}]{(v,\pd[t]v)(t)}$ and $e^{\alpha
    t}\norm[\espace]{(u,\pd[t]u)(t)}$ are comparable.  Dividing
  through by $e^{\alpha t}$ finishes the proof.
\end{proof}

\section{The solution operator}
\label{sec:solution-operator}

In this section we recall from the author's previous work
\cite{Baskin:2010} the description of the solution operator for the
Cauchy problem~\eqref{eq:inhomog-cauchy}.  We use this description
primarily to ensure uniformity in our estimates, as the non-static
nature of the spacetime means that we cannot represent in general the
solution operator as a function of a fixed Laplacian and so must use
another representation of the solution to obtain uniform estimates.

The Schwartz kernel of the solution operator is a Lagrangian
distribution on a compactification $\dblspace$ of the interior of
$X\times X$.  We briefly describe the result below and refer the
reader to the author's previous work for more details
\cite{Baskin:2010,Baskin:2010b}.  Because we are interested only in
obtaining uniform local Strichartz estimates, the homogeneous problem
requires only the behavior near the diagonal near infinity.  We
nevertheless include a fuller description of the Schwartz kernel for
two reasons.  First, it indicates why we limit our attention to
uniform local Strichartz estimates -- there is an
obstruction\footnote{The obstruction is described in
  Section~\ref{sec:an-obstr-glob}.} to a global dispersive estimate
that can be seen from the full parametrix.  Second, the inhomogeneous
problem typically requires an understanding of the behavior of the
solution operator far from the diagonal.  For static spacetimes, one
may rely on the unitary group structure of the propagator to obtain
estimates for the inhomogeneous problem in terms of those for the
homogeneous one.  In fact, this allows the Strichartz exponents for
the inhomogeneous equation to ``decouple'' (so that the source and
target exponents need not be dual to each other).  The non-static
nature of asymptotically de Sitter spaces, however, means that we must
estimate the inhomogeneous solution operator directly.

\subsection{The homogeneous solution operator}
\label{sec:homog-solut-oper}

For the homogeneous solution operators, we restrict our attention to
the behavior of the Schwartz kernel near the diagonal and near
infinity.  In this region, the kernel is most easily described on
$\dblzero$, a manifold with corners compactifying the interior of
$X\times X$.  This is the $0$-double space originally introduced by
Mazzeo and Melrose \cite{Mazzeo:1987}.  

We obtain $\dblzero$ by blowing up the boundary of the diagonal in
$X\times X$, yielding a manifold with corners that has a new boundary
hypersurface, which we call the front face $\frontface$.  The lift of
the diagonal and the flowout of the light cone to this manifold
intersect all boundary hypersurfaces transversely.
Figure~\ref{fig:dblspace} illustrates this blow-up.

\begin{figure}[htp]
  \centering
  \begin{tikzpicture}
    \draw [<->] (-4,3) to (0,3) node[anchor=north](yold){$y,\tilde{y}$};
    \draw [->] (-2,3) to (-2, 4.5) node[anchor=west](xtold){$\tilde{x}$};
    \draw [->] (-2,3) to (-1,2) node[anchor=west](xold) {$x$};
    \draw (-2,3) to (0,4) node[anchor=west](diagold){$\diag$};

    \draw [->, very thick] (0,4.5) .. controls (1, 5) .. (2,4.5);

    \draw [<-] (1,3) to (2,3);
    \draw [->] (4,3) to (5,3) node [anchor=north] (y){$y,\tilde{y}$};
    \draw (2,3) .. controls (3,3.5) .. (4,3) coordinate [pos=0.5,anchor=center] (top);
    \draw (4,3) .. controls (3,2.7) .. (2,3) coordinate [pos=0.5] (bottom);
    \draw [->] (top) to (3, 4.5) node [anchor=west](xt){$\tilde{x}$};
    \draw [->] (bottom) to (4 , 2) node [anchor=west](x){$x$};
    \draw [draw=white,double=black, very thick] (3,3) to (5,4) node [anchor=west](diag){$\diag _{0}$};
  \end{tikzpicture}
  \caption[Passing from $X\times X$ to the $0$-double space.]{Passing
    from $X\times X$ to the $0$-double space $X^2_0 = [X^{2}, \pd\left(\diag\right)
    ]$.}
  \label{fig:dblspace}
\end{figure}

The $0$-double space has three boundary hypersurfaces: $\leftface$,
the lift of the left face of $X\times X$ (given by $x=0$ in $X\times
X$); $\rightface$, the lift of the right face (given by $\xt = 0$ in
$X\times X$); and $\frontface$, the front face introduced by the
blow-up.  Near the front face in a single coordinate chart for $Y$,
the ``polar coordinates'' given below are smooth functions.
\begin{equation*}
  r_{\frontface} = \left( x^{2} + \xt^{2} + \left| y -
      \yt\right|^{2}\right)^{1/2}, \quad \theta = \left( x, y-\yt,
    \xt\right)/r_{\frontface}\in \sphere^{n+1}
\end{equation*}
It is often more convenient to work in projective coordinates near the
front face away from $\rightface$.  These are given by the
$(s,z,\xt,\yt)$, where $s$ and $z$ are the following:
\begin{equation*}
  s = \frac{x}{\xt}, \quad z = \frac{y-\yt}{\xt}
\end{equation*}

The flowout by the Hamilton vector field of the principal symbol of
$P$, $\sigma (P)$, of the characteristic set of $P$ intersected with
the lifted diagonal is a smooth submanifold of $T^{*}\dblzero$ that
intersects all boundary hypersurfaces transversely.  This is a
Lagrangian submanifold of $T^{*}\dblzero$, which we denote
$\Lambda_{1}$.  Near the front face but away from the diagonal,
$\Lambda_{1}$ is the conormal bundle of a submanifold we call the
light cone $\LC$.  Because we are only interested in the region
near the front face, we may assume without loss of generality that
$\LC$ is an embedded submanifold away from the diagonal.

In the following proposition, we summarize the relevant part of the
construction in the author's previous work \cite{Baskin:2010}.  We
denote by $\LDzero[m]{0}{\Lambda_{1}}$ the space of Lagrangian
distributions of order $m$ associated to the flowout Lagrangian
$\Lambda_{1}$ and supported near the diagonal in $\dblzero$.
\begin{prop}
  \label{prop:homog-sol-op}
  Suppose that $t_{0}$ is large and that $U_{p}(t,t_{0})$ and
  $U_{v}(t,t_{0})$ are the solution operators for the ``even'' and
  ``odd'' homogeneous initial value problems, respectively.  For any
  fixed $T_{0}$, let $K_{p}$ and $K_{v}$ be the distributional kernels
  of the operators $U_{p}$ and $U_{v}$, respectively, cut off to the
  region $|t-t_{0}| \leq T_{0}$.  Here $K_{p}$ and $K_{v}$ should be
  regarded as distributions on $X\times X$.  Under these conditions,
  $K_{p}$ and $K_{v}$ lie in the following spaces of Lagrangian
  distributions:
  \begin{align*}
    K_{p} &\in \LDzero[-1/2]{0}{\Lambda_{1}} \\
    K_{v} &\in \LDzero[-3/2]{0}{\Lambda_{1}}
  \end{align*}
\end{prop}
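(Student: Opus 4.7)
The plan is to construct parametrices for $U_p$ and $U_v$ as Lagrangian distributions on $\dblzero$ associated to the flowout Lagrangian $\Lambda_{1}$, and then absorb the remaining smooth error using the energy estimate in Proposition~\ref{prop:energy-est-for-p-tilde}. Since $\Lambda_{1}$ is the flowout by $H_p$ (with $p = \sigma(P(\lambda))$) of the intersection of the characteristic set of $P(\lambda)$ with the conormal bundle of the lifted diagonal in $\dblzero$, propagation of singularities already tells us that the wavefront sets of $K_p$ and $K_v$ lie on $\Lambda_{1}$; the task is to upgrade this qualitative statement to the precise Lagrangian orders $-1/2$ and $-3/2$.

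Concretely I would work in projective coordinates $(s, z, \xt, \yt)$ near $\ffp$ with $s = x/\xt$ and $z = (y-\yt)/\xt$, and make the oscillatory integral ansatz
\begin{equation*}
K_{\bullet}(s, z, \xt, \yt) = \int e^{i\phi(s, z, \xt, \yt, \xi)} a_{\bullet}(s, z, \xt, \yt, \xi)\,d\xi, \qquad \xi \in \reals^{n},
\end{equation*}
using finitely many phase functions if needed to cover $\Lambda_{1}$. The phase $\phi$ is determined by the eikonal equation $p(d_{(s,z)}\phi) = 0$ with the conormal variable prescribed on the diagonal at the front face, and the amplitude $a_{\bullet} \sim \sum_{j \geq 0} a_{-j}$ is determined recursively by transport equations along $H_p$, with leading-order terms chosen so that the Cauchy data match: a nonzero constant leading symbol (amplitude in $S^{0}$) for $K_p$, which produces $\delta(y-\yt)$ as the trace on $\diag$, and a leading symbol proportional to $1/|\xi|$ (amplitude in $S^{-1}$) for $K_v$, encoding the $\sqrt{\lap}^{-1}$ factor. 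The H\"ormander order formula on the $(2n+2)$-dimensional manifold $\dblzero$ with $n$ phase variables reads $m = \ell + n/2 - (2n+2)/4 = \ell - 1/2$, so amplitude order $\ell = 0$ yields $m = -1/2$ and $\ell = -1$ yields $m = -3/2$, matching the claim.

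Applying $P(\lambda)$ to this parametrix produces a remainder whose order can be iteratively lowered by refining the transport equations; Borel summation makes the remainder smooth, i.e., of order $-\infty$. The true kernel is then recovered by solving an inhomogeneous Cauchy problem with a smooth source via Proposition~\ref{prop:energy-est-for-p-tilde}, and the resulting correction is smooth and hence does not affect the Lagrangian class. Cutting off to $|t - t_0| \leq T_0$ gives the stated membership in $\LDzero[-1/2]{0}{\Lambda_{1}}$ and $\LDzero[-3/2]{0}{\Lambda_{1}}$. The hard part will be maintaining uniformity up to the front face $\ffp$: the phase, amplitude, and Hamilton flow must all extend smoothly to $\ffp$ under the $0$-blown-up geometry of Mazzeo-Melrose, and the transport equations must be solved so that $a_{\bullet}$ is polyhomogeneous down to $\ffp$. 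This uniform corner control is what distinguishes the present construction from the classical Duistermaat-H\"ormander parametrix and is what ultimately allows the Strichartz constants in Theorems~\ref{thm:strichartz-homog} and \ref{thm:strichartz-inhomog} to be independent of $t_0$.
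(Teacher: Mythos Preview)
The paper does not actually prove this proposition: it is stated as a summary of the construction carried out in the author's earlier paper \cite{Baskin:2010} (and \cite{Baskin:2010b}), with no argument given here beyond the description of $\dblzero$, $\Lambda_{1}$, and the relevant distribution classes.  Your outline---eikonal/transport parametrix in projective coordinates $(s,z,\xt,\yt)$ near the front face, Borel summation to a $C^{\infty}$ remainder, then correction via the energy estimate---is exactly the standard Duistermaat--H\"ormander construction transplanted into the Mazzeo--Melrose $0$-calculus, and it is the approach taken in the cited work.  Your order computation $m=\ell+n/2-(2n+2)/4=\ell-1/2$ is correct, and you have correctly identified the genuinely nontrivial point: the uniform smoothness of the phase, amplitude, and Hamilton flow down to the front face, which is what the $0$-blowup is designed to deliver and what makes the estimates uniform in $t_{0}$.
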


\subsection{The inhomogeneous solution operator}
\label{sec:inhom-solut-oper}

Although we eventually use a representation of the solution of the
inhomogeneous problem in terms of the solution operator of the
homogeneous problem, we do use the extra information contained in this
section.  This is because the iterative process used to extend the
homogeneous estimates in the Section~\ref{sec:homogeneous-problem-1}
does not in general work for the inhomogeneous problem.

We consider the solution operator $E$ for the inhomogeneous problem:
\begin{align*}
  P(\lambda) u &= f\\
  (u,\pd[t]u)|_{t=t_{0}} &= (0,0)
\end{align*}
It can be written in terms of the homogeneous solution operator:
\begin{equation*}
  \left( Ef\right) (t) = \int_{t_{0}} ^{t} U_{v}(t,s)f(s)\ds
\end{equation*}

In the rest of this subsection we provide a fuller description of the
Schwartz kernel of the operator $U_{v}(t,s)$ as $t,s\to +\infty$.
More precisely, we describe the kernel of $U_{v}(t,s)$ considered as a
distribution on $X\times X$.

The Schwartz kernel of $U_{v}(t,s)$ is a Lagrangian distribution on a
compactification $\dblspace$ of the interior of $X\times X$.  It is
obtained from $\dblzero$ by blowing up the intersection of the
projection $\LC$ of $\Lambda_{1}$ with the side faces $\leftface$ and
$\rightface$, introducing a new boundary hypersurface called the light
cone face, or $\newface$.\footnote{The full double space on which it lives is a
  bit more complicated, but we omit these complications here as we are
  only interested in the behavior near future infinity.  We refer the
  reader to \cite{Baskin:2010} for more details.}
Figure~\ref{fig:newdblspace} illustrates the space $\dblspace$.

\begin{figure}[htp]
  \centering
  \begin{tikzpicture}
    \draw [<-] (1,3) to (2,3);
    \draw [->] (4,3) to (5,3) node [anchor=west] (y){$y,\tilde{y}$};
    \draw (2,3) .. controls (3,3.5) .. (4,3) coordinate [pos=0.5,anchor=center] (top)
    coordinate [pos=0.25, anchor=center](leftlc)
    coordinate [pos=0.65, anchor=center](rightlc)
    coordinate [pos=0.2, anchor=center](llbuc)
    coordinate [pos=0.3, anchor=center](lrbuc)
    coordinate [pos=0.6, anchor=center](rlbuc)
    coordinate [pos=0.7, anchor=center](rrbuc);
    \draw (4,3) .. controls (3,2.5) .. (2,3) coordinate [pos=0.45]
    (bottom)
    coordinate [pos=0.25, anchor=center](rightlcb)
    coordinate [pos=0.65, anchor=center] (leftlcb)
    coordinate [pos=0.2,anchor=center] (rrbucb)
    coordinate [pos=0.3,anchor=center] (rlbucb)
    coordinate [pos=0.6,anchor=center] (lrbucb)
    coordinate [pos=0.7, anchor=center] (llbucb);

    \draw [draw=white,double=black,very thick] (3,3) to (5,4) node
    [anchor=west](diag){$\Delta _{0}$};

    \draw (3,3) to (leftlc);
    \draw (3,3) to (rightlcb);
    \draw (3,3) to (rightlc);
    \draw (3,3) to (leftlcb);
    \filldraw[draw = black,fill=white]{(llbuc) .. controls
      +(0.15,-0.05) .. (lrbuc) .. controls +(-0.25,0.65) .. ([shift={(-0.25,
        1)}]lrbuc) .. controls +(-0.075,-0.1) .. ([shift={(-0.25, 1)}]llbuc)
      .. controls +(0,-0.35) .. (llbuc)};
    \filldraw[draw=black,fill=white]{(llbucb) .. controls
      +(0.15,0.05) .. (lrbucb) .. controls +(-0.25,-0.65)
      .. ([shift={(-0.25, -1)}]lrbucb) .. controls +(-0.075, 0.1)
      .. ([shift={(-0.25,-1)}]llbucb) .. controls +(0,0.35) .. (llbucb)};
    \filldraw[draw=black,fill=white]{(rrbuc) .. controls
      +(-0.15,-0.05) .. (rlbuc) .. controls +(0.25, 0.65) .. ([shift={(0.25,
        1)}]rlbuc) .. controls +(0.075, -0.1) .. ([shift={(0.25,1)}]rrbuc)
      .. controls +(0,-0.35) .. (rrbuc)};
    \filldraw[draw=black,fill=white]{(rrbucb) .. controls
      +(-0.15,0.05) .. (rlbucb) .. controls +(0.25, -0.65) .. ([shift={(0.25,
        -1)}]rlbucb) .. controls +(0.075, 0.1) .. ([shift={(0.25,-1)}]rrbucb)
      .. controls +(0,0.35) .. (rrbucb)};

    \draw [->] (top) to (3, 4.5) node [anchor=west](xt){$\tilde{x}$};
    \draw [->] (bottom) to (3.5, 2) node [anchor=east](x){$x$};
  \end{tikzpicture}
  \caption{The double space $\dblspace$ near $\frontface$.}
  \label{fig:newdblspace}
\end{figure}

We now summarize the relevant result of the author's previous work
\cite{Baskin:2010}.  As above, we denote by $\LDzero{0}{\Lambda_{1}}$
the space of Lagrangian distributions of order $m$ associated to the
flowout Lagrangian $\Lambda_{1}$ and supported near the diagonal in
$\dblzero$.  We denote by $\phgLD{\F}{\LC}$ the space of distributions
conormal to $\LC$ whose symbols have polyhomogeneous expansions with
index family $\F$ at the side faces of $\dblspace$.  Finally, we
denote by $\phg{\F}{\dblspace}$ the space of polyhomogeneous conormal
distributions with index family $\F$ on $\dblspace$.  For a precise
definition of $\phgLD{\F}{\LC}$, we refer the reader to
\cite{Baskin:2010}.  For a discussion of polyhomogeneous conormal
distributions, we refer the reader to a paper of Melrose
\cite{Melrose:1992}.
\begin{prop}
  \label{prop:soln-op-odd}
  The kernel of the operator $U_{v}(t,s)$, regarded as a distribution
  on $X\times X$, can be written as a sum $K_{1}+K_{2}+K_{3}$, where
  $K_{i}$ are in the following spaces:
  \begin{align*}
    K_{1} &\in \LDzero[-3/2]{0}{\Lambda_{1}} \\
    K_{2} &\in \phgLD[-3/2]{\F}{\LC}\\
    K_{3} &\in \phg{\F}{\dblspace}
  \end{align*}
  Here $\Lambda_{1}$ is the flowout light cone, and the relevant index
  sets in the family $\F$ are given by the following:
  \begin{align*}
    F_{\newface} &= \left\{ (j,\ell):\ell\leq j, j\in \naturals_{0}
    \right\} \\
    F_{\leftface} &= F_{\rightface} = \left\{ \left(\frac{n}{2} \pm \sqrt{\frac{n^{2}}{4}
        - \lambda} + m, 0\right) : m \in
      \naturals_{0}\right\} \\
    F_{\frontface} &= \left\{ (m,0) : m\in\naturals_{0}\right\}
\end{align*}
\end{prop}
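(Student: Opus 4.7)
The plan is to construct the kernel of $U_{v}(t,s)$ by a parametrix argument on the $0$-double space $\dblzero$ and its blow-up $\dblspace$, in the spirit of Mazzeo--Melrose. The kernel is characterized by $P(\lambda) K_{v} = 0$ in the interior of $X \times X$, together with the distributional initial data $(K_{v}, \pd[t] K_{v})|_{t=t_{0}} = (0, \delta_{\tilde{y}})$. The three summands $K_{1}, K_{2}, K_{3}$ correspond respectively to the singularity at the lifted diagonal, its propagation along the flowout light cone out to $\newface$, and the smooth but asymptotically nontrivial tail after the light cone has passed.

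First I would build $K_{1} \in \LDzero[-3/2]{0}{\Lambda_{1}}$ near the lifted diagonal by the usual geometric optics construction: $\sigma(P)$ vanishes simply on the characteristic set, and the Hamilton flow of $\sigma(P)$ out of the conormal of the diagonal generates the Lagrangian $\Lambda_{1}$. Solving the transport equation with the initial data appropriate to the odd problem pins the order as $-3/2$, matching the standard computation of the forward fundamental solution of the wave operator. Next, since $\Lambda_{1}$ is a flowout Lagrangian meeting all boundary hypersurfaces of $\dblspace$ transversely, I would iteratively solve transport equations along $\Lambda_{1}$ to propagate this parametrix into a neighborhood of $\newface$, producing $K_{2} \in \phgLD[-3/2]{\F}{\LC}$. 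The index set $F_{\newface} = \{(j,\ell) : \ell \leq j, j \in \naturals_{0}\}$, including its logarithmic terms, emerges from indicial resonances encountered as the transport equations are solved iteratively near $\newface$.

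To obtain the smooth tail $K_{3} \in \phg{\F}{\dblspace}$, I would analyze the indicial operator of $P(\lambda)$ at the side faces. Using equation~\eqref{eq:box-in-coords}, the indicial operator at $\leftface$ (and by symmetry $\rightface$) is $(x\pd[x])^{2} - n(x\pd[x]) + \lambda$, whose characteristic roots are $\frac{n}{2} \pm \sqrt{\frac{n^{2}}{4} - \lambda}$. Inverting this normal operator iteratively, modulo lower order terms, produces polyhomogeneous expansions at $\leftface$ and $\rightface$ with exactly the claimed index sets $F_{\leftface} = F_{\rightface}$; smoothness at $\frontface$ follows from the non-degeneracy of the geometry after the front-face blow-up, giving $F_{\frontface} = \{(m,0) : m \in \naturals_{0}\}$. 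Assembling the three pieces yields an initial parametrix whose error is smoothing and rapidly decaying at all boundary faces, and a Borel summation then finalizes the asymptotic expansion.

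The main obstacle, I expect, lies in the corner analysis at $\newface \cap \leftface$ and $\newface \cap \rightface$: one must verify that the flowout Lagrangian structure carried by $K_{2}$ near $\newface$ is compatible with the polyhomogeneous expansion dictated by the indicial operator at the side faces. This compatibility is precisely what determines the combined index family $\F$ and what allows the iterative error-removal to converge asymptotically, so that the sum $K_{1} + K_{2} + K_{3}$ represents $U_{v}(t,s)$ modulo a genuinely negligible remainder. Verifying the exact matching of index sets across these corners, rather than merely the existence of some polyhomogeneous expansion, is where the bulk of the technical work sits.
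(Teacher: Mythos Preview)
Your outline is a reasonable sketch of how such a parametrix construction proceeds, but you should be aware that the paper does not actually prove this proposition. Proposition~\ref{prop:soln-op-odd} is explicitly stated as a summary of results from the author's earlier work \cite{Baskin:2010} (and \cite{Baskin:2010b}); the paragraph immediately preceding the statement reads ``We now summarize the relevant result of the author's previous work \cite{Baskin:2010},'' and no argument is given in the present paper. The same is true of Proposition~\ref{prop:homog-sol-op}. So there is nothing in this paper to compare your proposal against; the full construction --- including the corner analysis at $\newface \cap \leftface$ that you correctly flag as the delicate point --- lives entirely in the cited references.

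That said, your sketch tracks the architecture of the cited construction reasonably well: Lagrangian parametrix near the diagonal, propagation along the flowout to $\newface$, and indicial analysis at the side faces to produce the polyhomogeneous tail. One point to be careful about is the order in which the pieces are assembled: in the actual construction the passage from $K_{1}$ to $K_{2}$ is not simply ``iteratively solving transport equations near $\newface$'' but requires first blowing up $\LC \cap \leftface$ and $\LC \cap \rightface$ to create $\newface$, then matching the conormal structure of the propagated singularity to a polyhomogeneous expansion at the new face. The logarithms in $F_{\newface}$ arise from this matching, not from resonances of the transport equation per se. If you intend to reproduce the argument rather than cite it, that distinction matters.
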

 
\subsection{Regularization}
\label{sec:regularization}

Our proof of the dispersive estimates in the general case rely on
regularizing the solution operator.  In this section we describe that
regularization.

We rely on the notion of semiclassical pseudodifferential operators on
the slices $Y_{t}$.  Here the variable $\xt$ acts as the semiclassical
parameter.  In particular, we consider pseudodifferential operators
with Schwartz kernels given by oscillatory integrals:
\begin{equation}
  \label{eq:fam-psi-do}
  \frac{1}{(2\pi \xt)^{n}}\int_{\reals^{n}}e^{i(y-\yt)\cdot \eta / \xt} a\left( \frac{x}{\xt}
    , \frac{y-\yt}{\xt}, \xt, \yt, \eta\right) \deta
\end{equation}
Here $a$ is a symbol in $\eta$ and smooth in the rest of its
arguments.  We primarily use that powers of $1 + \lap_{k_{t}}$ are of
this form.

We also use the boundedness of these operators on $L^{p}$ spaces.
This is a standard result in semiclassical analysis.
\begin{lem}
  \label{lem:action-on-Lp}
  Suppose that $A_{x}$ is a family of pseudodifferential operators of
  order $-\epsilon$ on $Y$ of the form given by
  equation~\eqref{eq:fam-psi-do}.  If $1 < p < \infty$, then $A_{x}$
  is a bounded operator:
  \begin{equation*}
    A_{x}:L^{p}(Y, \dk_{x}) \to L^{p}(Y; \dk_{x})
  \end{equation*}
  Here the bound is independent of $x$.
\end{lem}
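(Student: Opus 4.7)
The plan is to reduce this to the classical $L^p$-boundedness theorem for pseudodifferential operators of nonpositive order via a semiclassical rescaling argument. First, I would localize using a partition of unity subordinate to a finite cover of $Y$ by coordinate charts; this is harmless because the kernel in \eqref{eq:fam-psi-do} is, modulo a uniformly smoothing remainder, concentrated near the diagonal at scale $\xt$. In a fixed chart, I would perform the substitution $\eta = \xt\eta'$, which cancels the prefactor $(2\pi\xt)^{-n}$, turns the phase into $(y-\yt)\cdot\eta'$, and converts the amplitude into $\tilde a(x,\xt,y,\yt,\eta') = a(x/\xt, (y-\yt)/\xt, \xt, \yt, \xt\eta')$. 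Equivalently, one may introduce the rescaled coordinates $\hat y = y/\xt$, $\hat{\yt} = \yt/\xt$, in which $A_x$ is represented as a classical pseudodifferential operator on $\reals^n$ whose amplitude is a symbol of order $-\epsilon$ in $\eta'$, uniformly in the parameter $\xt$.

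Next I would invoke the standard result that amplitudes in $S^{0}_{1,0}(\reals^n)$ (and hence in $S^{-\epsilon}_{1,0}$) yield operators bounded on $L^p(\reals^n)$ for $1 < p < \infty$, with operator norm controlled by finitely many symbol seminorms. The symbol $\tilde a$ has an additional smooth dependence on the ``diagonal-difference'' variable $(y-\yt)/\xt$; I would handle this either by expanding in a Taylor series in that variable with remainders absorbed into the decay afforded by the $-\epsilon$ order, or by treating it as an amplitude and applying the standard reduction of amplitudes to left symbols. Finally, I would translate the rescaled $L^p$ estimate back to $L^p(Y,\dk_x)$: since $\dk_x = x^{-n}\sqrt{h_x}\,dy$ and the rescaling $y\mapsto \hat y = y/\xt$ introduces a Jacobian $\xt^n$, the measures $\dk_x$ in the original coordinates and $\sqrt{h_x}\,d\hat y$ in rescaled coordinates differ by the multiplicative factor $(\xt/x)^n$, which pulls out of the $L^p\to L^p$ norm and cancels between source and target.

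The main obstacle is making the uniformity in $\xt$ precise, since the amplitude's dependence on $x/\xt$ and $(y-\yt)/\xt$ is unbounded off-diagonal. This is resolved by recalling that in the parametrix construction summarized in Section~\ref{sec:solution-operator}, the pseudodifferential kernels of interest are localized near the lifted diagonal in $\dblzero$, where $x/\xt$ is bounded above and below and $|y-\yt|/\xt$ ranges over a bounded set (contributions away from the lifted diagonal are smoothing and contribute a uniformly $L^p$-bounded remainder by Schur's test, since $\phg{\F}{\dblspace}$ elements have positive index sets at the side faces). Within this region $\tilde a$ is a genuine bounded symbol, and the standard proof gives a bound depending only on finitely many seminorms of $\tilde a$, all of which are controlled uniformly in $\xt$ by the smoothness of $a$ in its arguments.
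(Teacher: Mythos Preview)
Your rescaling argument is essentially correct and takes a genuinely different route from the paper. The paper does \emph{not} rescale to a classical pseudodifferential operator; instead it works directly in the semiclassical calculus and splits the symbol as $a = a_{0} + a_{\infty}$, with $a_{0}$ compactly supported in $\eta$ and $a_{\infty}$ supported away from $\eta = 0$. The piece $a_{0}$ is Schwartz in $\eta$ and is handled by citing off-the-shelf semiclassical $L^{p}$ bounds (Koch--Tataru--Zworski), while $a_{\infty}$ is handled by non-stationary phase (integration by parts gaining $\xt^{n}$, since $|\eta|$ is bounded below) together with Schur's test. The paper first proves boundedness on $L^{p}(Y,\dh_{x})$ and then passes to $L^{p}(Y,\dk_{x})$ by noting that $A_{x}$ commutes with multiplication by powers of $x$. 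Your approach is cleaner conceptually---one reduction rather than a two-piece decomposition---while the paper's is more elementary in that the $a_{\infty}$ part avoids invoking the full Calder\'on--Zygmund machinery.

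Two small caveats on your write-up. First, after the substitution $\eta = \xt\eta'$ the symbol $a(\cdot,\xt\eta')$ lies uniformly in $S^{0}_{1,0}$ but \emph{not} uniformly in $S^{-\epsilon}_{1,0}$: the order-$(-\epsilon)$ seminorms pick up a factor $\xt^{-\epsilon}$ in the region $|\eta'|\gtrsim \xt^{-1}$. This is harmless since $S^{0}$ suffices for $L^{p}$ boundedness, but your sentence ``a symbol of order $-\epsilon$ in $\eta'$, uniformly'' should be adjusted. Second, the two rescalings you describe are not equivalent for the purpose at hand: substituting in $\eta$ alone leaves the amplitude depending on $(y-\yt)/\xt$, whose $\yt$-derivatives blow up like $\xt^{-1}$; it is the spatial rescaling $\hat y = y/\xt$, $\hat\yt = \yt/\xt$ that makes the amplitude $a(x/\xt,\hat y - \hat\yt, \xt, \xt\hat\yt,\eta)$ uniformly smooth in both base variables, so you should commit to that version.
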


\begin{proof}
  We start by proving the same claim for $Y$ equipped with the volume
  form $\dh_{x}$.

  Using a partition of unity, we write the symbol $a = a_{0} +
  a_{\infty}$, where $a_{0}$ is supported near the zero section and
  $a_{\infty}$ is supported away from $0$.  The function $a_{0}$ is a
  Schwartz function in $\eta$, so we may use standard semiclassical
  results (see, e.g., Koch, Tataru, and Zworski \cite{Koch:2007}) to
  conclude that the bound holds for $a_{0}$.

  For $a_{\infty}$, we appeal to Schur's test.  Indeed, we must bound
  the integral over the left and right factors of $Y$, uniformly in
  $x$.  Because $a_{\infty}$ is supported away from $\eta = 0$, we may
  use the principle of non-stationary phase to bound the following integral:
  \begin{equation*}
    \int_{Y}\int_{\reals^{n}}e^{i(y-y')\cdot \eta / x}a_{\infty}(y,y',\eta)\deta\frac{\dy}{x^{n}}
  \end{equation*}
  Indeed, integrating by parts $n$ times gives a bound of $O(x^{n})$,
  which cancels the factor of $x^{-n}$ in the measure.  A similar
  bound applies to the integral in the other factor, proving the
  claim.

  This shows that $A_{x}$ is bounded $L^{p}(Y, \dh_{x}) \to L^{p}(Y,
  \dh_{x})$.    To prove that it is bounded $L^{p}(Y, \dk_{x} ) \to
  L^{p}(Y, \dk_{x})$, we note only that $A_{x}$ commutes with
  multiplication by $x$.
\end{proof}

We require the following notion of $L^{p}$-based Sobolev spaces, which
interpolate with the $L^{2}$-based Sobolev spaces in the standard way.
\begin{defn}
  \label{defn:sobolev-spaces}
  We define the $\Wzero^{s,p}(\dk_{t})$ norm of a function $\phi \in
  C^{\infty}(Y)$:
  \begin{equation*}
    \norm[\Wzero^{s,p}(\dk_{t})]{\phi} = \left(\int_{Y_{t}}\left| \left(
        1 + \lap_{k_{t}}\right)^{s/2}\phi\right|^{p}\dk_{t} \right)^{1/p}
  \end{equation*}
\end{defn}

We now prove a lemma that allows us to regularize the distributions in
Sections~\ref{sec:homog-solut-oper} and \ref{sec:inhom-solut-oper}.
\begin{lem}
  \label{thm:reg-1}
  Suppose $K\in\LDzero{0}{\Lambda_{1}}$ is a Lagrangian distribution
  of order $m$ associated to $\Lambda_{1}$, and $A_{x}\in \Psi^{k}(Y)$
  is a family of semiclassical pseudodifferential operators on $Y$ of
  order $k$, i.e., whose Schwartz kernels are given as oscillatory
  integrals of the form~\eqref{eq:fam-psi-do} with $a$ a symbol of
  order $k$.  The composition $AK$ is then also a Lagrangian
  distribution:
  \begin{equation*}
    AK \in \LDzero[m+k]{0}{\Lambda_{1}}
  \end{equation*}
  Here we may think of $A$ as acting on the left or the right $Y$
  factor in $X\times X$.  

  An analogous statement holds for $K \in \phgLD{\F}{\Lambda_{1}}$.
\end{lem}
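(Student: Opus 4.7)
The plan is to reduce the composition to a standard oscillatory integral calculation performed in projective coordinates near the front face, using the fact that in those coordinates $\tilde{x}$ simultaneously plays the role of the semiclassical parameter for $A_x$ and the boundary defining function of $\frontface$ on $\dblzero$. Because the statement is about behavior near the diagonal at infinity, all other regions can be handled by standard pseudodifferential/Lagrangian composition away from boundaries.

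First, I would work in the projective coordinates $(s,z,\tilde x,\tilde y)$ with $s=x/\tilde x$ and $z=(y-\tilde y)/\tilde x$, in which $\Lambda_1$ is, near $\ffp\cap\diag$, the flowout of the conormal to $\{s=1,\,z=0\}$ under the Hamilton vector field of the principal symbol of $P$ (lifted to $\dblzero$). By the general structure theorem for Lagrangian distributions, $K\in\LDzero[m]{0}{\Lambda_1}$ has a local oscillatory-integral representation of the form
\begin{equation*}
K(s,z,\tilde x,\tilde y)=(2\pi\tilde x)^{-N}\int_{\reals^{N}} e^{i\phi(s,z,\tilde y,\xi)/\tilde x}\,b(s,z,\tilde x,\tilde y,\xi)\,d\xi,
\end{equation*}
where $\phi$ is a non-degenerate phase function parametrizing $\Lambda_1$ and $b$ is a symbol of order $m+\frac{n+1}{2}-\frac{N}{2}$ in $\xi$, smooth down to $\tilde x=0$. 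The factor $\tilde x^{-n}$ in the measure conversion from $(x,y)$ to $(s,z)$ fits neatly with the semiclassical prefactor $(2\pi\tilde x)^{-N}$, so the representation is internally consistent on $\dblzero$.

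Next, for $A_x$ acting on the right $Y$ factor, I use the representation \eqref{eq:fam-psi-do} of its Schwartz kernel with symbol $a$ of order $k$ and write the composition as an iterated oscillatory integral, then apply the method of stationary phase (or, equivalently, the Kuranishi trick) to merge the phases. Since $a$ is a symbol in the fiber variables only and is smooth in the base variables, the combined phase still parametrizes $\Lambda_1$ (no new critical set is introduced), and the reduced symbol is a symbol of order $m+k$ in the merged fiber variables. This yields the desired membership in $\LDzero[m+k]{0}{\Lambda_1}$. For $A_x$ acting on the left factor, I change perspective using $x=s\tilde x$, so that after a coordinate swap the same argument applies; the transition is harmless because the composition away from $\frontface\cap\diag$ reduces to a classical pseudodifferential composition, already covered by Hörmander's Lagrangian calculus.

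The main obstacle is verifying the compatibility of the semiclassical symbol calculus with the $0$-Lagrangian structure—specifically, that the semiclassical parameter $\tilde x$ plays the same role on both sides of the composition, and that non-stationary phase estimates survive uniformly down to $\tilde x=0$. I would resolve this by a partition of unity on the $\xi$-support, treating the region near $\xi=0$ via the Schwartz-function argument of Lemma~\ref{lem:action-on-Lp} and the region $|\xi|\gtrsim 1$ by repeated integration by parts in $y-\tilde y$, exactly as in the proof of Lemma~\ref{lem:action-on-Lp}, which produces gains in $\tilde x$ matching the factors from the measure. For the polyhomogeneous case $K\in\phgLD{\F}{\LC}$, the same local argument near $\frontface$ applies verbatim; at $\leftface$ and $\rightface$ one must additionally check that the action of $A_x$ preserves the polyhomogeneous expansion with index family $\F$, which follows because $A_x$ is smooth in $\tilde x$ down to $\tilde x=0$ and commutes to leading order with $\tilde x$, so that each term of the expansion is mapped into a term of the same order.
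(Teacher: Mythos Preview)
Your approach is essentially the paper's: near the front face, write $K$ as an oscillatory integral and apply stationary phase to the composed integral to produce a new symbol of the correct order; away from the front face, fall back on standard Lagrangian calculus. The paper is a bit more economical near $\frontface$---it uses the explicit phase $e^{i(y-\tilde y)\cdot\xi/x}$ with $x$ (not $\tilde x$) as the semiclassical parameter when $A$ acts on the left, so the parameters already match and no ``change of perspective'' is needed---and it makes explicit one step you pass over: away from $\frontface$, $A$ is \emph{not} a pseudodifferential operator on $X$ (its kernel has extra wavefront set in the conormal-to-$\{x=\tilde x\}$ direction), so before invoking H{\"o}rmander's composition theorem one must observe that this extra wavefront set is disjoint from the operator wavefront set of $K$, allowing $A$ to be microlocalized to an honest pseudodifferential operator on $X$ without altering the singular structure of $AK$. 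Your partition-of-unity in $\xi$ and separate treatment of the polyhomogeneous expansion are more than the paper needs; it simply notes that the same stationary-phase computation carries over.
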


\begin{remark}
  \label{rem:regularization-1}
  Because $k_{x} = x^{-2}h$ near $x=0$, we may put $\left( 1 +
    \lap_{k_{x}}\right)^{k/2}$ in this form.  The $x^{-n}$ in front of the
  oscillatory integral should be interpreted as coming from the
  half-density factors in the operator kernel.
\end{remark}

\begin{proof}
  Away from the front face, we may appeal to standard composition
  results.  Although $A$ is not pseudodifferential on $X$, the
  additional wavefront set of the kernel of $A$ is disjoint from the
  operator wavefront set of $K$.  Indeed, in local coordinates near
  $\pd (X\times X)$, it is contained in the following set:
  \begin{equation*}
    \left\{ (x,y,x,\yt, \xi , 0 , -\xi, 0): (y,\yt) \in
      \operatorname{supp} A_{x}\right\}
  \end{equation*}
  The composition of this set with $\Lambda_{1}$ is empty, so we may
  microlocalize $A$ to be a pseudodifferential operator on $X$ without
  changing the singular structure of $AK$.

  Near the front face, we use a different argument.  Suppose that
  $A_{x}$ is a family of pseudodifferential operators on $Y$ with
  Schwartz kernels as in equation~\eqref{eq:fam-psi-do}.  Given a
  symbol $b(x,\xt, y, \yt, \xi)$ of order $m + \frac{1}{2}$, supported
  away from the side faces of $\dblzero$, we observe the following:
  \begin{equation*}
    A_{x}\left( b(x,\xt,y,\yt,\xi) e^{i(y-\yt)\cdot \xi /x}\right) =
    c(x,\xt,y,\yt,\xi)e^{i(y-\yt)\cdot \xi /x}
  \end{equation*}
  Here $c$ is another symbol of order $k + m + \frac{1}{2}$.  This can
  be seen (as in the book of Grigis and Sj{\"o}strand
  \cite{Grigis:1994}) via a careful application of stationary phase to
  the following integral:
  \begin{equation*}
    \int_{\reals^{n}} \int_{Y_{x}} e^{i(y-y')\cdot (\xi - \eta) /
      x}a(y,y',\eta) b(x,\xt, y', \yt, \xi) \frac{\dy'}{x^{n}}\deta
  \end{equation*}  The second statement follows by a similar argument.
\end{proof}

\section{Dispersive estimates for Lagrangian distributions}
\label{sec:disp-estim-lagr}

In this section we establish a family of uniform dispersive estimates
for distributions in the same class as the wave propagator.  In
particular, we prove that $0$-Lagrangian distributions associated to
$\Lambda_{1}$ and supported near the diagonal obey a dispersive
estimate.  We note that the method we use to prove the dispersive
estimate near the diagonal can be extended to a larger class of
Lagrangian distributions.

\begin{thm}
  \label{lem:dispersive-1}
  Suppose that $K\in \LDzero{0}{\Lambda_{1}}$ for $m =
  -\frac{n}{2}-1-\epsilon$ and $\epsilon > 0$ and that $K$ is
  supported near the diagonal in a small neighborhood of the front
  face.  Then in fact, in terms of coordinates $(s,z,\xt,\yt)$, $K$
  satisfies the following bound:
  \begin{equation*}
    \left| K \right| \lesssim \left| \log s\right| ^{-(n-1)/2 + \epsilon}
  \end{equation*}
  Moreover, because $|\log s |\lesssim 1$ on the support of $K$, we
  may ignore the $\epsilon$ in the previous bound.
\end{thm}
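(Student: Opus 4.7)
The plan is to realize $K$ as an oscillatory integral on a non-degenerate phase parametrization of $\Lambda_{1}$ near the diagonal, and then extract the pointwise bound via stationary phase in the angular frequency variables. In the projective coordinates $(s,z,\xt,\yt)$ of Subsection~\ref{sec:homog-solut-oper}, the normal operator of $P$ at the front face is a de Sitter-type wave operator whose characteristic variety and null bicharacteristics can be written down explicitly; the flowout Lagrangian $\Lambda_{1}$ therefore admits a non-degenerate phase parametrization $\phi(s,z,\xt,\yt;\xi)$ in some frequency variables $\xi$, with coefficients smooth in $\xt$ down to $\xt = 0$. By the definition of the class $\LDzero{0}{\Lambda_{1}}$ we may write
\begin{equation*}
K(s,z,\xt,\yt) = \int e^{i\phi(s,z,\xt,\yt,\xi)}\, a(s,z,\xt,\yt,\xi)\,d\xi
\end{equation*}
for a classical symbol $a$ of order controlled by $m = -\tfrac{n}{2}-1-\epsilon$.

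First I would pass to polar coordinates $\xi = \rho\omega$ with $\omega$ on the unit sphere. Since $\phi$ is homogeneous of degree one in $\xi$ it factors as $\phi = \rho\,\Psi(s,z,\xt,\yt,\omega)$, and at a fixed base point $(s,z,\xt,\yt)$ off the diagonal the angular phase $\Psi(s,z,\xt,\yt,\cdot)$ has non-degenerate critical points on the sphere, corresponding to the tangent direction of the null geodesic reaching that point from the diagonal. The key geometric input, obtained by a direct computation on the front-face normal operator, is that the transverse Hessian of $\Psi$ is of size $|\log s|$, where $|\log s|$ plays the role of proper time along the null flow. Stationary phase in the $(n-1)$ transverse angular directions then yields a gain of $(\rho|\log s|)^{-(n-1)/2}$. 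Combining this with the symbol bound on $a$ and summing dyadically in $\rho$, convergence is exactly the statement that the order $m$ lies $\epsilon$ below the critical threshold; spending this $\epsilon$ of spare regularity to sum the dyadic series produces the bound $|K|\lesssim |\log s|^{-(n-1)/2+\epsilon}$.

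The main technical obstacle is verifying that the oscillatory integral representation and the stationary phase analysis hold uniformly in $\xt$ down to the front face $\xt = 0$, since $\LDzero{0}{\Lambda_{1}}$ is defined through the 0-calculus rather than the classical Lagrangian calculus on $\dblzero$. I would handle this by first setting up the representation and verifying the angular non-degeneracy at the level of the normal operator, where the bicharacteristic flow of the de Sitter-type model is explicit, and then extending to nonzero $\xt$ via the smooth $\xt$-dependence of the phase and symbol built into the definition of $\LDzero{0}{\Lambda_{1}}$. The final assertion of the theorem then follows because on the support of $K$ the quantity $|\log s|$ is bounded, so the factor $|\log s|^{\epsilon}$ is uniformly bounded and may be absorbed into the implicit constant, yielding the stated bound without the $\epsilon$ in the exponent.
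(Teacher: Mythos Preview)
Your proposal is correct and follows essentially the same route as the paper: an oscillatory-integral representation with an explicit model phase at the front face, polar coordinates in the fiber variable, stationary phase in the angular directions, integration in the radial variable using the $\epsilon$ of spare symbol order, and a perturbation argument in $\xt$ to pass from the normal operator to the full Lagrangian. The only cosmetic difference is that the paper works with the concrete model phase $\phi_{0} = z\cdot\zeta \pm (1-s)|\zeta|$ and carries the parameter $(1-s)$ (so the effective large parameter in stationary phase is $(1-s)|\zeta|$), converting to $|\log s|$ only at the very end via $(1-s)\sim |\log s|$ near the diagonal; your phrasing in terms of a ``Hessian of size $|\log s|$'' and a dyadic sum in $\rho$ packages the same computation.
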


\begin{proof}
  We start by showing the dispersive estimate in the case where the
  Lagrangian $\Lambda_{1}$ is parametrized by the phase function
  $\phi_{0}$, given by the following in terms of $s$ and $z$:
  \begin{equation*}
    \phi_{0} = z\cdot \zeta \pm (1-s)|\zeta|
  \end{equation*}
  The argument is identical for either value of plus or minus, so we
  fix it to be plus.  We assume that the distribution $K$ is supported
  near the front face.  We may thus write it as an oscillatory
  integral of the following form:
  \begin{equation*}
    K = \int _{\reals^{n}} e^{i\phi_{0}}a(s,z,\xt,\yt,\zeta) \dzeta
  \end{equation*}
  Here $a$ is a symbol of order $m+\frac{1}{2}$.  Using polar
  coordinates $\zeta = |\zeta|\hat{\zeta}$ and writing $\phi_{0} =
  (1-s)|\zeta|\left( \frac{z}{1-s}\cdot \hat{\zeta} + 1\right)$, we
  may apply stationary phase to the above oscillatory integral:
  \begin{align*}
    \left| K \right| &\lesssim \int_{(1-s)^{-1}}\left|
      a\left(s,z,\xt,\yt,\pm|\zeta|\hat{z}\right) \right|\cdot \\
    &\quad\quad \left( |\zeta|^{(n-1)/2}(1-s)^{-(n-1)/2} +
      O(|\zeta|^{n-1/2}(1-s)^{-(n-2)/2})\right) \differential{|\zeta|}
    \\
    &+ \int_{0}^{(1-s)^{-1}}\int_{\sphere^{n-1}}\left|
      a(s,z,\xt,\yt,|\zeta|\hat{\zeta})\right|
    |\zeta|^{n-1}\differential{\hat{\zeta}}\differential{|\zeta|} 
  \end{align*}
  The first term is bounded by $C(1-s)^{-(n-1)/2 + \epsilon}$ when $m
  = -\frac{n}{2} - 1 - \epsilon$.  The second term within the
  parentheses is similarly bounded.  We then bound the third term by
  the following expression:
  \begin{equation*}
    C + C\int_{1}^{(1-s)^{-1}}|\zeta|^{n+\frac{1}{2}+m}\differential{|\zeta|}
  \end{equation*}
  This is then bounded by $C(1 + (1-s)^{-(n-1)/2 + \epsilon})$ when $m
  = - \frac{n}{2} -1 - \epsilon$.  Because this piece is supported
  near $s=1$, we may bound it by $(1-s)^{-(n-1)/2+\epsilon}$.

  We now use a perturbation argument to show that this estimate holds
  in a neighborhood of the front face for the Lagrangian
  $\Lambda_{1}$.  Indeed, near the front face, $\Lambda_{1}$ may be
  parametrized by $\phi_{0} + |\zeta| r(s,z,\xt,\yt,\hat{\zeta})$,
  where $r = \xt b$ and $b$ is a smooth function of its arguments.
  For small enough $\xt$, $\phi$ is still a phase functions and its
  critical points (in $\hat{\zeta}$) are close to those of
  $\phi_{0}$.  A similar argument to the one above thus shows that, in
  a small neighborhood of the front face, the same bound holds.

  Finally, because we are assuming that $K$ is supported near the
  diagonal, $(1-s)\sim \log s$ on the support of $K$.  This finishes
  the proof.
\end{proof}

We now prove a similar estimate for the larger class of distributions
used for the inhomogeneous problem.
\begin{lem}
  \label{lem:dispersive-2}
  Suppose that $K \in \phgLD{\F}{\Lambda_{1}}$ with $m = -\frac{n}{2}
  - 1 - \epsilon$ and $\epsilon > 0$.  If $K$ is supported near the
  light cone $\LC$ and $F_{\newface}\geq 0$, then $K$ satisfies the
  following bound when both $x$ and $\xt$ are close to $0$:
  \begin{equation*}
    |K| \lesssim \max\left( 1 , \left| \log \left(
          \frac{x}{\xt}\right)\right| ^{-\frac{n-1}{2} + \epsilon}\right)
  \end{equation*}
  Moreover, we may replace the exponent in the previous equation with
  $-\frac{n-1}{2}$.  
\end{lem}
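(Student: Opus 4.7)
The plan is to localize $K$ via a partition of unity on $\dblspace$ and treat the different boundary regimes separately. Since $K$ is supported near $\LC$, which in the blown-up space $\dblspace$ meets the boundary only at $\frontface$ and $\newface$, I would write $K = K_{1} + K_{2}$, where $K_{1}$ is supported in a neighborhood of $\frontface$ and $K_{2}$ is supported in a neighborhood of $\newface$ but away from $\frontface$.

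For $K_{1}$, I would repeat the stationary phase argument from Theorem~\ref{lem:dispersive-1}, but now required uniformly across the entire portion of $\LC$ near $\frontface$ rather than only near the diagonal. In the projective coordinates $(s,z,\xt,\yt)$, the Lagrangian $\Lambda_{1}$ is parametrized by the phase $\phi = z\cdot\zeta + (1-s)|\zeta| + |\zeta|\xt\, b(s,z,\xt,\yt,\hat{\zeta})$, and writing $K_{1}$ as an oscillatory integral with symbol of order $m + \tfrac{1}{2} = -\tfrac{n+1}{2}-\epsilon$, stationary phase in the angular variable $\hat{\zeta}$ followed by integration in $|\zeta|$ reproduces the bound $|K_{1}| \lesssim (1-s)^{-(n-1)/2+\epsilon}$ in the regime $|1-s|\to 0$. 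Away from the diagonal, where $|1-s|$ is bounded below, the same computation produces a uniform $O(1)$ bound (since the $\rho$-integration $\int_{1}^{\infty} \rho^{(n-1)/2 + m + 1/2}\,d\rho$ converges for $m < -\tfrac{n}{2}-1$). Because $|1-s|\sim |\log s| = |\log(x/\xt)|$ near the diagonal, this yields $|K_{1}|\lesssim \max(1, |\log(x/\xt)|^{-(n-1)/2+\epsilon})$.

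For $K_{2}$, the polyhomogeneous expansion at $\newface$ with index set $F_{\newface} = \{(j,\ell):\ell\leq j,\ j\in\naturals_{0}\}$ has nonnegative real parts for its exponents, so the expansion is uniformly bounded as one approaches $\newface$. In projective coordinates adapted to the blow-up creating $\newface$, the conormal singularity of $K_{2}$ along $\LC$ is of order $m+\tfrac{1}{2} < -\tfrac{n}{2}$, well below the codimension of $\LC$, so the fiber integral defining $K_{2}$ is absolutely convergent and yields a pointwise bound. Since the support of $K_{2}$ is away from $\frontface$, the ratio $x/\xt$ is bounded away from $1$ and therefore $|\log(x/\xt)|$ is bounded below, giving $|K_{2}|\lesssim 1 \lesssim \max(1, |\log(x/\xt)|^{-(n-1)/2+\epsilon})$ as needed.

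The main obstacle is verifying uniformity of the stationary phase bound for $K_{1}$ across the portion of $\LC$ extending away from the diagonal, and confirming that the perturbation $|\zeta|\xt\, b$ does not spoil the nondegeneracy of the critical points (where $\hat{\zeta} = \pm z/|z|$) anywhere on this extended region; this should follow from smallness of $\xt$ near $\frontface$ combined with smoothness of $b$, exactly as in the perturbation argument of Theorem~\ref{lem:dispersive-1}. Once this is secured, the final remark replacing the exponent $-\tfrac{n-1}{2}+\epsilon$ by $-\tfrac{n-1}{2}$ follows just as in Theorem~\ref{lem:dispersive-1}: on the support of $K$, $|\log(x/\xt)|$ is bounded, so the factor $|\log(x/\xt)|^{\epsilon}$ is absorbed into the multiplicative constant.
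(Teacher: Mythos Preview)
Your proposal is correct and follows essentially the same route as the paper. The paper's proof is terse: near the diagonal it invokes Theorem~\ref{lem:dispersive-1} verbatim, and away from the diagonal it passes to the one-variable conormal representation (phase $\phi_{1}=\frac{1}{s}((1-s)-|z|)\eta$), observes that the symbol order $m+\frac{1}{2}$ is negative enough to be integrable, and then cites the polyhomogeneity index bound $F_{\newface}\ge 0$ to control the approach to $\newface$. Your decomposition by boundary face ($\frontface$ vs.\ $\newface$) rather than by distance to the diagonal is a cosmetic reorganization of the same ingredients; in particular, your treatment of the ``away from diagonal'' part of $K_{1}$ via the convergent $|\zeta|$-integral is exactly the integrability of the conormal symbol that the paper uses, just reached after stationary phase in $\hat{\zeta}$ rather than by writing the one-variable phase directly.
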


\begin{proof}
  Near the diagonal, the proof is identical to the proof of
  Lemma~\ref{lem:dispersive-1}.  Away from the diagonal, $\Lambda_{1}$
  is the conormal bundle of an embedded submanifold and the phase
  function for this distribution is a perturbation of the following:
  \begin{equation*}
    \phi_{1}(s,z,\xt,\yt, \eta) = \frac{1}{s}\left( (1-s) -
      |z|\right)\eta 
  \end{equation*}
  Here $s = x/\xt$ and $z = (y-\yt) / \xt$.  The order $m$ is
  sufficiently negative that the symbol of the conormal distribution
  is integrable.  We now simply combine the result near the diagonal
  with the symbol bound given from the order of polyhomogeneity.
\end{proof}

We also record the following trivial but useful observation.
\begin{lem}
  \label{lem:dispersive-3}
  If $K\in\phg{\F}{\dblspace}$ with all index sets greater than or
  equal to $0$, then its Schwartz kernel is smooth on the interior of
  $X\times X$ and uniformly bounded on $X\times X$.
\end{lem}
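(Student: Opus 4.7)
The plan is to read the conclusion directly off the definition of a polyhomogeneous conormal distribution on the compact manifold with corners $\dblspace$. Recall that $K \in \phg{\F}{\dblspace}$ means that, near each boundary hypersurface $H$ of $\dblspace$ with boundary defining function $\rho_{H}$, the distribution $K$ admits an asymptotic expansion
\begin{equation*}
    K \sim \sum_{(z,k) \in F_{H}} a_{z,k}\,\rho_{H}^{z}\bigl(\log \rho_{H}\bigr)^{k},
\end{equation*}
with coefficients $a_{z,k}$ themselves polyhomogeneous at the remaining boundary faces (see Melrose \cite{Melrose:1992}), and in particular $K$ is smooth on the interior of $\dblspace$.

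First I would establish smoothness on the interior of $X\times X$. The construction of $\dblspace$ from $X\times X$ consists entirely of blow-ups of submanifolds lying in the boundary of $X\times X$, so the blow-down map $\beta: \dblspace \to X\times X$ restricts to a diffeomorphism between the respective interiors. The smoothness of $K$ on the interior of $\dblspace$ therefore descends to smoothness of $\beta_{*}K$ on the interior of $X\times X$.

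Next I would establish the uniform bound. The hypothesis that every index set $F_{H}$ is $\geq 0$ means precisely that each pair $(z,k) \in F_{H}$ satisfies $\operatorname{Re} z \geq 0$, with $k = 0$ whenever $z = 0$. For such $(z,k)$, the factor $\rho_{H}^{z}(\log \rho_{H})^{k}$ is bounded as $\rho_{H} \to 0^{+}$. Truncating the expansion at a finite order and absorbing the remainder (which vanishes at $H$) into the smooth part yields a uniform bound for $K$ in a neighborhood of each boundary hypersurface; the joint polyhomogeneity at corners handles neighborhoods of lower-dimensional boundary strata in the same way. Compactness of $\dblspace$ then produces a global uniform bound, and surjectivity of $\beta$ transfers this bound to $X\times X$.

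No substantive obstacle arises: the content of the lemma is essentially a matter of unpacking the definition of polyhomogeneity together with the observation that $\beta$ is proper, surjective, and a diffeomorphism on interiors, which is why the lemma is labelled ``trivial.'' The only minor subtlety to flag in the write-up is the convention on the phrase ``index set $\geq 0$,'' namely that $(0,k)$ with $k>0$ is excluded so that no $\log \rho_{H}$ singularity is introduced at $H$.
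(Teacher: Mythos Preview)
Your proof is correct. The paper does not actually give a proof of this lemma at all---it merely records it as a ``trivial but useful observation'' immediately after the statement. Your write-up is exactly the argument one would supply if asked to spell out that observation: smoothness in the interior from the fact that the blow-down map is a diffeomorphism there, and uniform boundedness from the non-negativity of the index sets together with compactness of $\dblspace$. Your flag about the convention that ``index set $\geq 0$'' excludes pairs $(0,k)$ with $k>0$ is also appropriate and matches how the index sets in Proposition~\ref{prop:soln-op-odd} actually behave.
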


\section{Estimating the propagator}
\label{sec:estim-prop}

We now seek dispersive estimates for the propagator and for the
solution operator for the inhomogeneous problem.  We typically
regularize the distribution, appeal to the result of the previous
section to obtain a $L^{1}\to L^{\infty}$ type estimate, interpolate
with an energy estimate, and then de-regularize.

\subsection{The homogeneous problem}
\label{sec:homogeneous-problem}

We start by fixing $t_{0}$ large.  As before, we let $U_{v}(t,t_{0})$
denote the solution operator for the ``odd'' problem and
$U_{p}(t,t_{0})$ denote the solution operator for the ``even''
problem.  To simplify the calculation of adjoints,
we consider the operators $\pd[t]U_{v}(t,t_{0})$, $(1 +
\lap_{k_{t}})^{1/2}U_{v}(t,t_{0})$, $\pd[t]U_{p}(t,t_{0})(1+\lap_{k_{t}})^{-1/2}$,
and $(1+\lap_{k_{t}})^{1/2}U_{p}(t,t_{0})(1+\lap_{k_{t}})^{-1/2}$.  All are
elements of $\LDzero[-1/2]{0}{\Lambda_{1}}$, and the energy estimates
provide $L^{2}\to L^{2}$ bounds for them.  We denote by
$U_{\bullet}'(t,t_{0})$ any one of these operators.  We seek a
dispersive estimate for the products
$U_{\bullet}'(t,t_{0})U_{\bullet}'(s,t_{0})^{*}$, where the adjoint
is taken with respect to the $L^{2}$ inner product.

We denote by $\tilde{U}_{\bullet}$ the regularization of
$U'_{\bullet}$ by order $r = \frac{n+1}{4}+\frac{\epsilon}{2}$, i.e.,
$\tilde{U}_{\bullet} = \left( 1 + \lap_{k}\right)^{-r}U'_{\bullet}$.

In order to estimate $\tilde{U}_{\bullet}\tilde{U}_{\bullet}^{*}$, we
use the following lemma.
\begin{lem}
  \label{lem:composition-1}
  Suppose that $s, t>t_{0}$ and $t_{0}$ is sufficiently large.  For
  each $T_{0}$, the restriction of the product
  $\tilde{U}_{\bullet}(t,t_{0})\tilde{U}_{\bullet}(s,t_{0})^{*}$ to
  the neighborhood $|s-t_{0}|\leq T_{0}$ and $|t-t_{0}|\leq T_{0}$ is
  an element of $\LDzero[-\frac{n}{2}-1-\epsilon]{0}{\Lambda_{1}}$,
  uniformly in $t_{0}$.
\end{lem}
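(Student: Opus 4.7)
The plan is to combine the regularization lemma~\ref{thm:reg-1} with the classical composition theorem for Lagrangian distributions on $\dblzero$ associated to the flowout $\Lambda_{1}$. By Proposition~\ref{prop:homog-sol-op}, the kernels of $U_{p}$ and $U_{v}$ cut off to $|t-t_{0}|\leq T_{0}$ lie in $\LDzero[-1/2]{0}{\Lambda_{1}}$ and $\LDzero[-3/2]{0}{\Lambda_{1}}$, respectively. Each of the four operators grouped under $U'_{\bullet}$ is obtained from these by composing on the left or right with $\pd[t]$ or $(1+\lap_{k_{t}})^{\pm 1/2}$; Lemma~\ref{thm:reg-1} (together with Remark~\ref{rem:regularization-1}, which handles the powers of $(1+\lap_{k})$, and an analogous semiclassical argument for $\pd[t]$) thus shows that every $U'_{\bullet}$ lies in $\LDzero[-1/2]{0}{\Lambda_{1}}$. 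Applying Lemma~\ref{thm:reg-1} once more with the regularizer $(1+\lap_{k})^{-r}$ and $r=(n+1)/4+\epsilon/2$ produces $\tilde{U}_{\bullet}(t,t_{0})\in \LDzero[-n/2-1-\epsilon]{0}{\Lambda_{1}}$, supported near the diagonal by the cutoff.

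Next, I would verify that the adjoint $\tilde{U}_{\bullet}(s,t_{0})^{*}$ with respect to the $L^{2}(\dk_{t_{0}})$ pairing belongs to the same class. The adjoint swaps the left and right factors of $\dblzero$, and since $\Lambda_{1}$ is the Hamilton flowout of $\sigma(P)$ from the conormal of the diagonal (with $P$ formally self-adjoint up to smooth terms), it is preserved by this swap. The heart of the argument is then the composition: interpreted as canonical relations, composing the bicharacteristic flowout from $t_{0}$ to $t$ with the backward flowout from $s$ to $t_{0}$ yields the flowout from $s$ to $t$, which is again a piece of $\Lambda_{1}$ --- in effect, the semigroup property of the wave propagator at the Lagrangian level. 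On the support cut out by the $T_{0}$-cutoffs, $|t-s|\leq 2T_{0}$, so the composition takes place in a neighborhood of the diagonal where it is transversal, and the standard composition theorem for Lagrangian distributions produces an element of $\LDzero[-n/2-1-\epsilon]{0}{\Lambda_{1}}$. The order is preserved (not doubled) because the composition is of wave-propagator type: $\tilde{U}_{\bullet}(t,t_{0})\tilde{U}_{\bullet}(s,t_{0})^{*}$ is morally $(1+\lap_{k_{t}})^{-r}$ applied to a propagator from $s$ to $t$ and then to $(1+\lap_{k_{s}})^{-r}$.

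The main obstacle is establishing uniformity in $t_{0}$. Working on $\dblzero$ rather than on $X\times X$ is essential: the projective coordinates $(s,z,\xt,\yt)$ near the front face expose a dilation invariance $x\mapsto e^{-t_{0}}x$ under which the principal part of the phase $\phi_{0}$ appearing in the proof of Theorem~\ref{lem:dispersive-1}, along with the associated symbols, is $t_{0}$-independent. Combined with the uniform $L^{2}\to L^{2}$ bound on $U'_{\bullet}$ from Proposition~\ref{prop:energy-est-for-p-tilde} and the smoothness of the residual $x$- and $\xt$-dependence down to the front face guaranteed by the parametrix construction of \cite{Baskin:2010}, this yields symbolic estimates for the composition with constants independent of $t_{0}$, provided $t_{0}$ is bounded away from $-\infty$. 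If invoking the abstract FIO composition theorem in the $0$-calculus proves awkward, an alternative is to work directly in the coordinates $(s,z,\xt,\yt)$ and apply stationary phase simultaneously in the two frequency variables dual to $z$ and its analog, checking that the stationary locus reproduces $\Lambda_{1}$ and that the resulting symbol bound gives the stated order.
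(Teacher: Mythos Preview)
Your overall strategy matches the paper's: invoke H\"ormander's FIO composition theorem after checking transversality of the two canonical relations, and extract uniformity in $t_{0}$ from smoothness of the kernels up to the front face of $\dblzero$. The gap is in the order bookkeeping. Your claim that the regularization alone already places $\tilde{U}_{\bullet}$ in $\LDzero[-n/2-1-\epsilon]{0}{\Lambda_{1}}$ is an arithmetic slip: since $U'_{\bullet}\in\LDzero[-1/2]{0}{\Lambda_{1}}$ and $r=(n+1)/4+\epsilon/2$, Lemma~\ref{thm:reg-1} gives only $\tilde{U}_{\bullet}\in\LDzero[-(n+3)/4-\epsilon/2]{0}{\Lambda_{1}}$. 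Moreover, transversal composition of Lagrangian distributions \emph{adds} orders; the phrase ``the order is preserved (not doubled)'' is not the correct picture, and your heuristic that the composition is ``morally $(1+\lap_{k_{t}})^{-r}$ applied to a propagator from $s$ to $t$ and then to $(1+\lap_{k_{s}})^{-r}$'' would require an identity $U'_{\bullet}(t,t_{0})U'_{\bullet}(s,t_{0})^{*}=U'_{\bullet}(t,s)$ at least at the principal-symbol level, which is not automatic for the particular operators $\pd[t]U_{v}$, $(1+\lap_{k})^{1/2}U_{v}$, etc.

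The mechanism the paper uses is worth internalizing: the kernels live on $X\times X$, but to form the product $\tilde{U}_{\bullet}(t,t_{0})\tilde{U}_{\bullet}(s,t_{0})^{*}$ one first restricts the inner variable to the hypersurface $\{t'=t_{0}\}$, and restricting a Lagrangian distribution to a transversal hypersurface shifts the order by $+\tfrac{1}{4}$. After this shift on each factor, the additive composition formula gives
\[
2\Bigl(-\tfrac{n+3}{4}-\tfrac{\epsilon}{2}+\tfrac{1}{4}\Bigr)=-\tfrac{n}{2}-1-\epsilon,
\]
which is the stated order. Once you fix this step, the rest of your outline (adjoint preserving $\Lambda_{1}$, transversality near the diagonal, uniformity via the projective coordinates near $\frontface$) is correct and coincides with the paper's argument.
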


\begin{proof}
  The Lagrangian submanifolds corresponding to $\tilde{U}_{\bullet}$
  and $\tilde{U}_{\bullet}^{*}$ intersect transversely in $Y\times Y$,
  so we may follow the proof of H{\"o}rmander (Theorem 4.2.2 of
  \cite{Hormander:1971}) to see that
  $\tilde{U}_{\bullet}\tilde{U}_{\bullet}^{*}$ is a Fourier integral
  operator.  The uniformity follows from the smoothness of the
  distribution up to the front face.

  The compositions have the stated order because restricting to
  $t'=t_{0}$ shifts the order by $\frac{1}{4}$.
\end{proof}

Combining the previous lemma with the results of
Section~\ref{sec:disp-estim-lagr} proves the following corollary.
\begin{prop}
  \label{prop:disp-prop-1}
  For $\epsilon > 0$, $t_{0}$ large and $s,t$ within a fixed distance
  $T_{0}$ of $t_{0}$, the composition
  $\tilde{U}_{\bullet}(t,t_{0})\tilde{U}_{\bullet}(s,t_{0})^{*}$ is a
  bounded operator $L^{1}(\dk_{s})\to L^{\infty}(\dk_{t})$ with the
  following bound:
  \begin{equation*}
    C|t-s|^{-\frac{n-1}{2}+\epsilon}
  \end{equation*}
  Here the bound is independent of $t_{0}$.
\end{prop}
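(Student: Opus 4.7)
The plan is to read off the desired $L^1 \to L^\infty$ bound from a pointwise bound on the Schwartz kernel of $\tilde{U}_{\bullet}(t,t_{0})\tilde{U}_{\bullet}(s,t_{0})^{*}$, combining Lemma~\ref{lem:composition-1} with the dispersive estimate for Lagrangian distributions, Theorem~\ref{lem:dispersive-1}. First I would invoke Lemma~\ref{lem:composition-1} to identify the composition (restricted to the region $|t-t_{0}|, |s-t_{0}| \leq T_{0}$) as an element of $\LDzero[-n/2 - 1 - \epsilon]{0}{\Lambda_{1}}$, uniformly in $t_{0}$. After using a partition of unity adapted to a neighborhood of the front face in $\dblzero$ (the far-from-diagonal piece being harmless, since the kernel is smooth and uniformly bounded there), I would apply Theorem~\ref{lem:dispersive-1} to bound the kernel in the projective coordinates $(\sigma, z, \tilde{x}, \tilde{y})$ with $\sigma = x/\tilde{x}$ by
\begin{equation*}
\left| K_{t,s}(y,\tilde{y})\right| \lesssim |\log \sigma|^{-(n-1)/2 + \epsilon},
\end{equation*}
uniformly in $t_{0}$.

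Next I would translate the blow-up coordinate $\sigma$ back into the time coordinates. Since $x = e^{-t}$ and $\tilde{x} = e^{-s}$ near $Y_{+}$, one has $\sigma = e^{-(t-s)}$, so $|\log \sigma| = |t-s|$. This converts the pointwise estimate into
\begin{equation*}
\left| K_{t,s}(y,\tilde{y})\right| \lesssim |t-s|^{-(n-1)/2 + \epsilon}
\end{equation*}
on the relevant region. The restriction $t, s$ within $T_{0}$ of $t_{0}$ guarantees we stay in the coordinate patch where the dispersive estimate is valid and where the constants in Lemma~\ref{lem:composition-1} are uniform.

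Finally, I would convert this pointwise kernel bound into an operator bound. For an integral operator $Tf(y) = \int K(y,\tilde{y}) f(\tilde{y}) \dk_{s}(\tilde{y})$, the standard estimate
\begin{equation*}
\norm[L^{\infty}(\dk_{t})]{Tf} \leq \left( \sup_{y,\tilde{y}} |K(y,\tilde{y})|\right) \norm[L^{1}(\dk_{s})]{f}
\end{equation*}
yields the claimed $L^{1}(\dk_{s}) \to L^{\infty}(\dk_{t})$ bound. Uniformity of the constant in $t_{0}$ is inherited from the corresponding uniformity in Lemma~\ref{lem:composition-1} and in Theorem~\ref{lem:dispersive-1}.

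The main technical obstacle I anticipate is bookkeeping at the level of half-densities and volume forms: one needs to be sure that passing from the Schwartz kernel (naturally a half-density on $\dblzero$) to the integral operator acting between $L^{1}(\dk_{s})$ and $L^{\infty}(\dk_{t})$ does not introduce unwanted powers of $\tilde{x}$, and that the front-face uniformity of the Lagrangian distribution genuinely translates to $t_{0}$-independence after making this identification. The key point, already implicit in the setup of Section~\ref{sec:regularization}, is that the weight $x^{-n}$ appearing in the oscillatory integrals is exactly compensated by the relation $\dk_{t} = e^{nt}\dh_{t}$, so the bound is clean once expressed in $(t, s)$ coordinates.
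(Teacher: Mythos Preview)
Your proposal is correct and follows essentially the same route as the paper: invoke Lemma~\ref{lem:composition-1} to place the composition in $\LDzero[-n/2-1-\epsilon]{0}{\Lambda_{1}}$, apply Theorem~\ref{lem:dispersive-1} for the pointwise kernel bound, and then change variables $|\log(x/\tilde{x})| = |t-s|$ to convert to the $L^{1}\to L^{\infty}$ operator bound. The partition-of-unity step you include is harmless but unnecessary, since by definition elements of $\LDzero{0}{\Lambda_{1}}$ are already supported near the diagonal (and near the front face), so Theorem~\ref{lem:dispersive-1} applies directly.
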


\begin{proof}
  An $L^{1}(\dk_{s})\to L^{\infty}(\dk_{t})$ estimate is equivalent to
  a pointwise bound on $K$, where $K$ is the Schwartz kernel of the
  composition.  The claim then follows in light of
  Lemmas~\ref{lem:dispersive-1} and \ref{lem:composition-1}.  Indeed,
  the bound from Lemma~\ref{lem:dispersive-1} implies the following
  bound for $K$ (considered as an operator)::
  \begin{equation*}
    \norm[L^{\infty}(\dk_{x})]{K\phi} \lesssim \left|
      \log\frac{x}{\xt}\right| ^{-(n-1)/2+\epsilon}\norm[L^{1}(\dk_{\xt})]{\phi}
  \end{equation*}
  Changing coordinates from $(x,y)$ to $(t,y)$ finishes the proof.
\end{proof}

By interpolating with the energy estimates in
Section~\ref{sec:energy-estimates}, we obtain the following family of
dispersive estimates.
\begin{thm}
  \label{thm:homog-dispersive-full}
  Suppose that $\lambda > 0$, $\epsilon > 0$, and that $\alpha$ is
  such that $0 \leq \alpha < \sqrt{\lambda}$ if $\lambda \leq
  \frac{n^{2}}{4}$ and $\alpha = \frac{n}{2}$ if $\lambda >
  \frac{n^{2}}{4}$.  If $t_{0}$ is sufficiently large and $|t-t_{0}| ,
  |s-t_{0}|\leq T_{0}$ for a fixed $T_{0}$, then the composition
  $U_{\bullet}'(t,t_{0})U_{\bullet}'(s,t_{0})$ is a bounded operator
  between Sobolev spaces:
  \begin{equation*}
    \Wzero^{\left(\frac{1}{q'} - \frac{1}{q}\right)\left(\frac{n+1}{4}
      + \frac{\epsilon}{2}\right) , q'} \left(Y_{s},\dk_{s}\right) \to
  \Wzero^{-\left( \frac{1}{q'}-\frac{1}{q}\right)\left( \frac{n+1}{4}
      + \frac{\epsilon}{2}\right), q}\left( Y_{t}, \dk_{t}\right)
  \end{equation*}
  Here $q'$ denotes the conjugate exponent of $q$ and $\Wzero^{r,q}$
  denotes the $L^{q}$-based Sobolev space of order $r$.  As a map
  between these spaces, the operator obeys the following bound with
  $C$ independent of $t_{0}$:
  \begin{equation*}
    Ce^{(n-2\alpha)(t-t_{0})/q}e^{(n-2\alpha)(s-t_{0})/q}|t-s|^{-\left(
      \frac{1}{q'}-\frac{1}{q}\right)\left( \frac{n-1}{2} 
      \right)} 
  \end{equation*}
\end{thm}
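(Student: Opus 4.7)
The approach is a $TT^{*}$ argument via complex interpolation between an $L^{2}$-based energy estimate and the $L^{1}\to L^{\infty}$ regularized dispersive estimate from Proposition~\ref{prop:disp-prop-1}.  First, each operator $U_{\bullet}'(t,t_{0})$ is controlled by the energy norm of the initial data, so Proposition~\ref{prop:energy-est-main} yields
\begin{equation*}
  \norm[L^{2}(\dk_{t_{0}}) \to L^{2}(\dk_{t})]{U_{\bullet}'(t,t_{0})} \leq C e^{(n-2\alpha)(t-t_{0})/2},
\end{equation*}
and composing $U_{\bullet}'(t,t_{0})$ with the adjoint of $U_{\bullet}'(s,t_{0})$ produces the $q=q'=2$ endpoint
\begin{equation*}
  \norm[L^{2}(\dk_{s})\to L^{2}(\dk_{t})]{U_{\bullet}'(t,t_{0}) U_{\bullet}'(s,t_{0})^{*}} \leq C e^{(n-2\alpha)(t-t_{0})/2} e^{(n-2\alpha)(s-t_{0})/2},
\end{equation*}
in which both the Sobolev shift and the $|t-s|$ factor predicted by the theorem disappear, as they should.

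For the second endpoint I would translate Proposition~\ref{prop:disp-prop-1} into a Sobolev-to-Sobolev bound on the unregularized composition.  Since $\tilde{U}_{\bullet} = (1+\lap_{k})^{-r}U_{\bullet}'$ with $r = \tfrac{n+1}{4}+\tfrac{\epsilon}{2}$, and $(1+\lap_{k})^{\pm r}$ act as isomorphisms between the appropriate $L^{p}$-based Sobolev spaces on each $Y_{t}$ (by Lemma~\ref{lem:action-on-Lp} and duality), the $L^{1}\to L^{\infty}$ bound of order $C|t-s|^{-(n-1)/2+\epsilon}$ on $\tilde{U}_{\bullet}(t,t_{0})\tilde{U}_{\bullet}(s,t_{0})^{*}$ unwinds to a bound for $U_{\bullet}'(t,t_{0})U_{\bullet}'(s,t_{0})^{*}$ between the shifted spaces $\Wzero^{\sigma_{0},1}(\dk_{s})$ and $\Wzero^{-\sigma_{0},\infty}(\dk_{t})$ for the corresponding $\sigma_{0}$.

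I would then apply Riesz--Thorin interpolation between these two endpoint estimates.  Choosing the interpolation parameter $\theta\in[0,1]$ so that $1-\theta = 1/q'-1/q$ (equivalently $\theta = 2/q$, forcing the source Lebesgue exponent to be $q'$ and the target to be $q$), the intermediate Sobolev order scales linearly in $1-\theta$ and the norm satisfies $M_{0}^{1-\theta}M_{1}^{\theta}$.  Carrying out the arithmetic yields the exponential weight $e^{(n-2\alpha)(t-t_{0})/q}e^{(n-2\alpha)(s-t_{0})/q}$ and the dispersive decay $|t-s|^{-(1/q'-1/q)(n-1)/2}$ stated in the theorem; the residual $\epsilon$ appearing in the $|t-s|$ exponent from the dispersive endpoint is absorbed into $C$ since $|t-s|\leq 2T_{0}$ is bounded on the time window under consideration.

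The main technical obstacle is justifying the complex interpolation of the family of weighted Sobolev spaces $\Wzero^{s,p}(\dk_{t})$ \emph{uniformly in} $t$.  This reduces to a bounded-imaginary-powers estimate for $(1+\lap_{k_{t}})^{is}$ on $L^{p}(\dk_{t})$ with constants controlled independently of $t$ and of the purely imaginary exponent, which follows from the semiclassical pseudodifferential framework underpinning Lemma~\ref{lem:action-on-Lp}.  Threading the uniformity of the constant through both endpoints, so that the final $C$ depends only on $(X,g)$ and on $T_{0}$ rather than on $t_{0}$, $s$, or $t$, is the most delicate bookkeeping step but is routine once the endpoint estimates have the required uniformity.
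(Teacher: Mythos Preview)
Your proposal is correct and follows essentially the same interpolation strategy as the paper.  The only organizational difference is that the paper interpolates the \emph{regularized} composition $\tilde{U}_{\bullet}\tilde{U}_{\bullet}^{*}$ (between the $L^{1}\to L^{\infty}$ bound of Proposition~\ref{prop:disp-prop-1} and the $\Wzero^{-r,2}\to \Wzero^{r,2}$ bound coming from the energy estimate) and only afterwards removes the regularization at the intermediate exponents via Lemma~\ref{lem:action-on-Lp}, whereas you unwind the regularization first at the endpoints and then interpolate the unregularized operator; the two orderings are equivalent, and your discussion of bounded imaginary powers makes explicit what the paper leaves implicit in the word ``interpolate.''  One minor point: the unwinding at $p=1,\infty$ is tautological from the definition of $\Wzero^{s,p}$ and does not actually use Lemma~\ref{lem:action-on-Lp}, which is restricted to $1<p<\infty$.
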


\begin{remark}
  \label{rem:homog-disp-what-is-bound}
  If $\lambda > \frac{n^{2}}{4}$, the exponential terms in the bound
  above disappear.
\end{remark}

\begin{proof}
  We may assume that $t,s > t_{0}$.
  Proposition~\ref{prop:energy-est-main} and the boundedness of
  pseudodifferential operators on $L^{2}$-based Sobolev spaces imply
  that $\tilde{U}_{\bullet}(t,t_{0})\tilde{U}_{\bullet}(s,t_{0})^{*}$
  is a bounded operator:
  \begin{equation*}
    \Wzero^{-\frac{n+1}{4}-\frac{\epsilon}{2}, 2} (Y_{s}, \dk_{s}) \to
    \Wzero^{\frac{n+1}{4} + \frac{\epsilon}{2}, 2} (Y_{t}, \dk_{t})
  \end{equation*}
  As a map between these spaces, the operator is bounded by the following:
  \begin{equation*}
    Ce^{(n-2\alpha)(t-t_{0})/2}e^{(n-2\alpha)(s-t_{0})/2}
  \end{equation*}

  We now interpolate with the bounds in
  Proposition~\ref{prop:disp-prop-1} to see that for $q\in
  (2,\infty)$, the composition
  $\tilde{U}_{\bullet}(t,t_{0})\tilde{U}_{\bullet}(s,t_{0})^{*}$ is a
  bounded operator:
  \begin{equation*}
    \Wzero^{-\frac{2}{q}\left( \frac{n+1}{4} +
        \frac{\epsilon}{2}\right), q'}(Y_{s},\dk_{s}) \to \Wzero
    ^{\frac{2}{q}\left( \frac{n+1}{4} + \frac{\epsilon}{2}\right), q}
    (Y_{t}, \dk_{t})
  \end{equation*}
  As such a map, the operator has the following bound:
  \begin{equation*}
    Ce^{(n-2\alpha)(t-t_{0})/q}e^{(n-2\alpha)(s-t_{0})/q}|t-s|^{-\left(
      \frac{1}{q'} - \frac{1}{q}\right) \left( \frac{n-1}{2} - \epsilon\right)}
  \end{equation*}

  We finally remove the regularization with
  Lemma~\ref{lem:action-on-Lp}. By ignoring the $\epsilon$ in the
  above bound, we may combine the losses from the estimate and the
  de-regularization into a single $\epsilon$.
\end{proof}

If $h$ is independent of $t$ for large $t$, we may insert
Littlewood-Paley projectors and remove the loss of regularity via
now-standard arguments.
\begin{thm}
  \label{thm:h-indep-t-dispersive}
  Suppose that $\lambda > 0$, $\epsilon > 0$, and that $\alpha$ is
  such that $0 \leq \alpha < \sqrt{\lambda}$ if $\lambda \leq
  \frac{n^{2}}{4}$ and $\alpha = \frac{n}{2}$ if $\lambda >
  \frac{n^{2}}{4}$.  If $t_{0}$ is sufficiently large and $|t-t_{0}| ,
  |s-t_{0}|\leq T_{0}$ for a fixed $T_{0}$, then the composition
  $U_{\bullet}'(t,t_{0})U_{\bullet}'(s,t_{0})$ is a bounded operator
  between Sobolev spaces:
  \begin{equation*}
    \Wzero ^{\left( \frac{1}{q'}-\frac{1}{q}\right)\left(
        \frac{n+1}{4}\right), q'}(Y_{s},\dk_{s}) \to \Wzero ^{-\left(
        \frac{1}{q'}-\frac{1}{q}\right)\left(\frac{n+1}{4}\right), q}(Y_{t},\dk_{t})
  \end{equation*}
    Here $q'$ denotes the conjugate exponent of $q$.  As a map
  between these spaces, the operator has the following bound with
  $C$ independent of $t_{0}$:
  \begin{equation*}
    Ce^{(n-2\alpha)(t-t_{0})/q}e^{(n-2\alpha)(s-t_{0})/q}|t-s|^{-\left(
      \frac{1}{q'}-\frac{1}{q}\right)\left( \frac{n-1}{2} 
      \right)} 
  \end{equation*}
\end{thm}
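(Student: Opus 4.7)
The plan is to combine the dispersive estimate of Lemma~\ref{lem:dispersive-1} with a Littlewood-Paley decomposition adapted to the fixed metric $h$, which is only available because we are assuming $h$ is independent of $t$ for large $t$. Since $\lap_{h}$ is then a fixed self-adjoint operator on $Y$, functional calculus yields Littlewood-Paley projectors $P_{k}$ localizing $(1+\lap_{h})^{1/2}$ to dyadic scales $2^{k}$. Because $k_{t}=e^{2t}h$ gives $\lap_{k_{t}}=e^{-2t}\lap_{h}$, these $P_{k}$ are also (up to rescaling) spectral projectors for $\lap_{k_{t}}$, and the $\Wzero^{s,q}(Y_{t},\dk_{t})$ norms admit a square-function characterization uniform in $t$, with each $P_{k}$ bounded on $L^{p}(Y_{t},\dk_{t})$ for $1<p<\infty$ by the Mikhlin-H{\"o}rmander theorem for functions of $\lap_{h}$ on the compact manifold $(Y,h)$.

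The key analytic step is to rerun the stationary-phase argument of Lemma~\ref{lem:dispersive-1} for the frequency-localized operator $P_{k}U'_{\bullet}(t,t_{0})U'_{\bullet}(s,t_{0})^{*}P_{k}$, whose Schwartz kernel, by Lemma~\ref{thm:reg-1}, is again a Lagrangian distribution in $\LDzero{0}{\Lambda_{1}}$ but now with symbol essentially supported in $|\zeta|\sim 2^{k}$. The $\epsilon$-loss in Lemma~\ref{lem:dispersive-1} came solely from forcing convergence of the high-frequency tail of the $|\zeta|$-integral; with that integral now truncated to a dyadic shell, the three terms from stationary phase each yield clean powers of $2^{k}$, producing the pointwise kernel bound
\begin{equation*}
  \bigl|P_{k}U'_{\bullet}(t,t_{0})U'_{\bullet}(s,t_{0})^{*}P_{k}(\cdot,\cdot)\bigr| \lesssim 2^{k(n+1)/2}\,|t-s|^{-(n-1)/2},
\end{equation*}
i.e.\ an $L^{1}(\dk_{s})\to L^{\infty}(\dk_{t})$ bound of the same size, with no $\epsilon$-loss.

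Interpolating this endpoint against the $L^{2}\to L^{2}$ bound $\lesssim e^{(n-2\alpha)(t-t_{0})/2}e^{(n-2\alpha)(s-t_{0})/2}$ (from Proposition~\ref{prop:energy-est-main}, combined with uniform $L^{2}$-boundedness of $P_{k}$) via Riesz-Thorin gives, for each dyadic $k$ and with $r=(\tfrac{1}{q'}-\tfrac{1}{q})\tfrac{n+1}{4}$,
\begin{equation*}
  \|P_{k}U'_{\bullet}U'^{*}_{\bullet}P_{k}\|_{L^{q'}\to L^{q}} \lesssim 2^{2kr}\, e^{(n-2\alpha)(t-t_{0})/q}e^{(n-2\alpha)(s-t_{0})/q}\,|t-s|^{-(1/q'-1/q)(n-1)/2}.
\end{equation*}
To reassemble, I would then use almost-orthogonality: the principal symbol of $U'_{\bullet}$ is homogeneous of degree $0$ in the fiber variable of $\Lambda_{1}$, so off-diagonal compositions $P_{k}U'_{\bullet}U'^{*}_{\bullet}P_{k'}$ with $|k-k'|\geq C$ contribute only smoothing errors and at most $O(1)$ values of $k'$ matter for each $k$. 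Summing dyadically and converting the factor $2^{2kr}$ into Sobolev norms by the square-function characterization from the first paragraph then yields exactly the $\Wzero^{r,q'}(\dk_{s})\to \Wzero^{-r,q}(\dk_{t})$ mapping property with the stated constant.

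The main obstacle will be the alignment in the second step: rigorously identifying the spectral projector $P_{k}$ for $\lap_{h}$ on $Y$ with a localization to $|\zeta|\sim 2^{k}$ of the fiber variable in the oscillatory integral representation of $U'_{\bullet}$ on the $0$-double space. This is essentially a symbolic-calculus computation in the spirit of Lemma~\ref{thm:reg-1}, but now one must track $k$-powers through the composition, through the matching of the semiclassical fiber variable $\eta$ (the covariable dual to $y$, scaled by $\xt$) with the Lagrangian fiber variable $\zeta$, and through the stationary-phase integration, verifying that no hidden $k$-powers enter from the non-principal terms in the phase perturbation $\phi_{0}+|\zeta|\xt b$. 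Once this is done the rest of the argument is the by-now-standard Littlewood-Paley proof of Strichartz estimates.
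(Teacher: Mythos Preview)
Your proposal is correct and is exactly the approach the paper intends: the paper gives no proof at all for this theorem, only the one-line remark immediately preceding it that when $h$ is independent of $t$ one may ``insert Littlewood-Paley projectors and remove the loss of regularity via now-standard arguments.'' You have accurately expanded what that entails---the fixed Laplacian $\lap_{h}$ yields honest spectral projectors, the frequency-localized kernel picks up exactly the factor $2^{k(n+1)/2}$ in place of the divergent $|\zeta|$-integral responsible for the $\epsilon$-loss in Lemma~\ref{lem:dispersive-1}, and reassembly via square functions converts this into the sharp Sobolev exponent $\left(\tfrac{1}{q'}-\tfrac{1}{q}\right)\tfrac{n+1}{4}$---and you have also correctly identified the one genuine technical point (matching the spectral localization in $\lap_{h}$ to the fiber-variable localization on $\dblzero$) that the paper elides by calling the argument ``standard.''
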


\subsubsection{An obstruction to a global dispersive estimate}
\label{sec:an-obstr-glob}

The non-decay of the principal symbol of the propagator near infinity
provides an obstruction to a global dispersive estimate.  In this
section we briefly review how to obtain a global dispersive estimate
on Minkowski space under our framework and then provide a heuristic
argument for the lack of a global dispersive estimate on
asymptotically de Sitter spaces.  The argument we provide here is
heuristic,  but we believe it can be made rigorous by directly
estimating solutions for the model problem on $\reals_{s}^{+}\times
\reals_{z}^{n}$.  

On Minkowski space $\reals\times \reals^{n}$, the propagator is given
as a linear combination of terms of the following form:
\begin{equation*}
  U(t,s,y,\yt) = \int _{\reals^{n}}e^{i\left( y- \yt\right)\cdot \eta
    \pm i(t-s)|\eta|}a(y,\yt,t,s,\eta)\deta
\end{equation*}
Here $a$ is a symbol of order $0$ in $\eta$.  The local dispersive
estimate is known near the diagonal, so we assume here that
$(t,s,y,\yt)$ satisfy $|y-\yt| , |t-s| \geq 1$.  Our goal is to write
$U$ as a conormal distribution associated to the light cone and
parametrized by a single variable.  This is possible because the light
cone is an embedded hypersurface away from the diagonal.  To do this,
we introduce modified polar coordinates in $\eta$ by writing
$(t-s)|\eta| = \sigma$ and $\hat{\eta} = \frac{\eta}{|\eta|}$.
Applying stationary phase in $\hat{\eta}$ shows that $U$ can be
written in the following form:
\begin{equation}
  \label{eq:conormal-written}
  U(t,s,y,\yt) = \int_{\reals}e^{i\rho(t,s,y,\yt)\sigma}\tilde{a}(t,s,y,\yt,\sigma)\dsigma
\end{equation}
Here $\rho$ is a defining function for the light cone and $\tilde{a}$
is a symbol of order $\frac{n-1}{2}$ in $\sigma$ with symbol norms
decaying as $|t-s|^{-\frac{n-1}{2}}$.  In other words, $U$ is written
in the desired form and its symbol exhibits the claimed decay.

Strictly speaking, however, the $W^{\frac{n-1}{2}, 1}\to L^{\infty}$
dispersive estimate does not hold even on Minkowski space, as the
kernel of the propagator is unbounded.  There are several ways to
handle this minor defect.  Three of the most popular include
considering individually the spectral localizations of the solution,
restricting the $L^{1}$-based Sobolev space to a Hardy space, and
enlarging $L^{\infty}$ to a space of functions of bounded mean
oscillation.  Here we take a different approach.  We consider instead
the dispersive estimate satisfied by the regularization of the
propagator by order $\frac{n+1}{2}+\epsilon$.  In Minkowski space, the
kernel of $(1 + \lap_{y})^{-\frac{n+1}{2}-\epsilon} U(t,s,y,\yt)$ has
the same form as in equation~\eqref{eq:conormal-written} but with a
symbol of order $-1-\epsilon$.  The symbol of the regularized
propagator is then integrable and exhibits decay of the requisite
form, i.e., $(1 + \lap_{y})^{-\frac{n+1}{2}-\epsilon} U(t,s)$
satisfies an $L^{1}\to L^{\infty}$ estimate on Minkowski space with
bound $|t-s|^{-\frac{n-1}{2}}$.

A careful reading of Section 12 of the author's previous
work \cite{Baskin:2010} shows that the principal symbol of the
propagator on an asymptotically de Sitter space (away from the
diagonal) approaches a nonzero constant.  Regularizing $U_{v}$ by
order $\frac{n+1}{2}+\epsilon$ and applying the same argument shows
that the regularized propagator has the following form near the
intersection of the front face and the light cone face:
\begin{equation}
  \label{eq:conormal-on-ads}
  \int_{\reals}e^{i\rho \eta}a\left( \rho,s,\frac{z}{|z|},\xt,\yt, \eta\right)\deta
\end{equation}
Here $\rho = \frac{1-s-|z|}{s}$ and $a$ is an elliptic symbol (for
each fixed $s$) of order $-1-\epsilon$ so that $\left(1 + \eta
  ^{2}\right) ^{\frac{1+\epsilon}{2}} a$ approaches a nonzero constant (for
large $\eta$) as $s\to 0$.  This integral is the inverse Fourier
transform of a symbol and so is a distribution conormal to $\rho = 0$.
Treating it as a family of distributions in $s$, the non-decay of the
principal symbol implies that any pointwise bound the family satisfies
should not decay in $s$.  In particular, the regularized propagator
satisfies an $L^{1}\to L^{\infty}$ bound but with a non-decaying
constant.

Upgrading this heuristic argument to a rigorous one (i.e., saturating
the above $L^{1}\to L^{\infty}$ bound) would require several steps.
The first and simplest step would be to show that applying the
regularized propagator to a series of $L^{1}$-normalized bump
functions approximating the delta function indeed yields a solution of
the form~\eqref{eq:conormal-on-ads} for fixed $\xt$ and $\yt$.  The
difficult step is to then show that the
integral~\eqref{eq:conormal-on-ads} in fact does not decay in $s$,
i.e., there are no miraculous cancellations.  For generic elliptic
symbols of order $-1-\epsilon$ this statement is true, but it seems
difficult to rule out decay for all such $a$.

\subsection{The inhomogeneous problem}
\label{sec:inhom-probl}

In this section we prove the dispersive estimates for $U(t,s)$ that
are needed to prove inhomogeneous Strichartz estimates on arbitrarily
long intervals.  Indeed, we use that the solution operator $E$ for the
inhomogeneous problem is related to the homogeneous propagator via
Duhamel's principle:
\begin{equation*}
  (Ef)(t) = \int_{t_{0}} ^{t}U_{v}(t,s)f(s)\ds
\end{equation*}

We prove the following theorem about the operators $\pd[t]U_{v}$ and
$(1 + \lap_{k_{t}})^{1/2}U_{v}$.
\begin{thm}
  \label{thm:inhomog-dispersive}
  Suppose that $\lambda > 0$, $\epsilon > 0$, and $\alpha$ is such
  that $0 \leq \alpha < \sqrt{\lambda}$ for $\lambda \leq
  \frac{n^{2}}{4}$ and $\alpha = \frac{n}{2}$ for $\lambda >
  \frac{n^{2}}{4}$.  If $t_{0}$ is sufficiently large and $t,s \geq
  t_{0}$, then $\pd[t]U_{v}(t,s)$ and $(1 +
  \lap_{k_{t}})^{1/2}U_{v}(t,s)$ are bounded operators between Sobolev
  spaces:
  \begin{equation*}
    L^{q'}(\dk_{s}) \to \Wzero ^{-2s , q}  (\dk_{t})
  \end{equation*}
  Here $q\in (2,\infty)$ and $q'$ denotes the conjugate exponent of
  $q$.  The regularity exponent $s$ is given by the following:
  \begin{equation*}
   2s =\left(1 - \frac{2}{q}\right) \left( \frac{n+1}{2}+
        \epsilon\right)
  \end{equation*}
  As a map between these Sobolev spaces, $\pd[t]U_{v}(t,s)$ and $(1 +
  \lap_{k_{t}})^{1/2}U_{v}(t,s)$ obey the following bound with $C$ independent
  of $t_{0}$:
  \begin{equation*}
    C e^{(n-2\alpha)(t-s)/q} \max \left( |t-s|^{-\left( \frac{1}{q'}
          -\frac{1}{q}\right)\left( \frac{n-1}{2}\right)}, 1\right)
  \end{equation*}
  
  If $h$ is independent of $t$ for large $t$, we may take $\epsilon = 0$.
\end{thm}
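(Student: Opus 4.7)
The plan is to parallel the proof of Theorem~\ref{thm:homog-dispersive-full} but apply it to the single kernel $U_{v}(t,s)$ in place of a composition with an adjoint; this forces us to use the full parametrix of Proposition~\ref{prop:soln-op-odd} rather than only the diagonal piece. First, I decompose the kernel of $\pd[t] U_{v}(t,s)$ (or of $(1+\lap_{k_{t}})^{1/2} U_{v}(t,s)$) according to Proposition~\ref{prop:soln-op-odd} as a sum $K_{1} + K_{2} + K_{3}$ with $K_{1} \in \LDzero[-1/2]{0}{\Lambda_{1}}$, $K_{2} \in \phgLD[-1/2]{\F}{\LC}$, and $K_{3} \in \phg{\F}{\dblspace}$, the orders being raised by one relative to Proposition~\ref{prop:soln-op-odd} to account for the single derivative factor. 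Composing on both sides with $(1+\lap_{k_{t}})^{-r/2}$ for $r = (n+1)/2 + \epsilon$ lowers each Lagrangian piece to total order $-n/2 - 1 - \epsilon$ by Lemma~\ref{thm:reg-1}, which is precisely the regime in which Lemmas~\ref{lem:dispersive-1} and \ref{lem:dispersive-2} apply.

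Next, I establish the $L^{1} \to L^{\infty}$ endpoint bound for the regularized operator by applying Lemma~\ref{lem:dispersive-1} to the regularized $K_{1}$, Lemma~\ref{lem:dispersive-2} to the regularized $K_{2}$, and Lemma~\ref{lem:dispersive-3} to $K_{3}$ (all relevant index sets from Proposition~\ref{prop:soln-op-odd} are nonnegative when $\lambda > 0$). Under the substitution $x = e^{-t}$, $\xt = e^{-s}$, the factor $|\log(x/\xt)|^{-(n-1)/2}$ from Lemma~\ref{lem:dispersive-2} becomes $|t-s|^{-(n-1)/2}$, while $K_{2}$ and $K_{3}$ contribute a uniform $O(1)$ piece over the rest of $\dblspace$; this gives a pointwise kernel bound of $C\max(|t-s|^{-(n-1)/2}, 1)$. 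The complementary $L^{2} \to L^{2}$ endpoint follows from Proposition~\ref{prop:energy-est-main} together with the uniform $L^{2}$-boundedness of $(1+\lap_{k_{t}})^{-r/2}$, yielding a bound of $Ce^{(n-2\alpha)(t-s)/2}$. Riesz-Thorin interpolation at exponent $q \in (2,\infty)$ combines these into a map from $L^{q'}(\dk_{s})$ into a fractional $L^{q}$-based Sobolev space with bound $Ce^{(n-2\alpha)(t-s)/q}\max(|t-s|^{-(1/q'-1/q)(n-1)/2}, 1)$; de-regularizing via the uniform $L^{q}$-boundedness of $(1+\lap_{k_{t}})^{-r/2}$ from Lemma~\ref{lem:action-on-Lp} identifies the target space with $\Wzero^{-2s,q}(\dk_{t})$ for $2s = (1-2/q)((n+1)/2 + \epsilon)$.

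The main technical obstacle is the uniformity of all constants as $t, s \to \infty$; unlike in Theorem~\ref{thm:homog-dispersive-full}, the two time parameters are not confined to a fixed interval, and the $\max(\cdot, 1)$ shape of the bound reflects the non-decay of the propagator away from the diagonal discussed heuristically in Section~\ref{sec:an-obstr-glob}. This uniformity is automatic from the polyhomogeneous conormal structure of the kernel on the compactification $\dblspace$, whose boundary hypersurfaces absorb the large-$t, s$ asymptotics of $U_{v}$; without this conormal description one would need a separate long-time argument. Finally, to obtain the improved statement with $\epsilon = 0$ when $h$ is independent of $t$ for large $t$, I would insert Littlewood-Paley projectors adapted to $1+\lap_{k_{t}}$ and apply the unregularized dispersive bound to each dyadic frequency block, exactly as in the passage from Theorem~\ref{thm:homog-dispersive-full} to Theorem~\ref{thm:h-indep-t-dispersive}; the $t$-independence of $h$ is what lets the spectral projectors commute effectively enough with the propagator to avoid the regularization loss.
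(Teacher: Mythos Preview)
Your proposal is correct and follows essentially the same route as the paper: regularize $\partial_{t}U_{v}$ (or $(1+\lap_{k_{t}})^{1/2}U_{v}$), invoke Lemmas~\ref{lem:dispersive-1}--\ref{lem:dispersive-3} on the three pieces of Proposition~\ref{prop:soln-op-odd} for the $L^{1}\to L^{\infty}$ endpoint, use Proposition~\ref{prop:energy-est-main} for the $L^{2}$ endpoint, interpolate, de-regularize, and appeal to Littlewood--Paley projectors when $h$ is $t$-independent.

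One small discrepancy worth flagging: the paper regularizes \emph{only on the left} by the full amount $(1+\lap_{k_{t}})^{-(\frac{n+1}{2}+\epsilon)}$, not symmetrically on both sides. This one-sided choice is what produces the asymmetric mapping $L^{q'}(\dk_{s})\to \Wzero^{-2s,q}(\dk_{t})$ directly after interpolation (the $L^{2}$ endpoint becomes $L^{2}\to \Wzero^{\frac{n+1}{2}+\epsilon,2}$ rather than $L^{2}\to L^{2}$). Your two-sided scheme would naturally yield a symmetric $\Wzero^{s,q'}\to\Wzero^{-s,q}$ estimate; recovering the stated asymmetric form then requires one more application of the $(1+\lap_{k})^{s}$ isomorphism between Sobolev scales, which is harmless but should be made explicit. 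Also, check your order bookkeeping: with the convention $(1+\lap_{k_{t}})^{s/2}\in\Psi^{s}$ used in Definition~\ref{defn:sobolev-spaces}, composing on both sides with $(1+\lap_{k_{t}})^{-r/2}$ lowers the Lagrangian order by $2r$, not $r$.
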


\begin{remark}
  \label{rem:inhom-probl-2}
  The regularity exponent on the right side is less than or equal to $1$ so long as
  $q$ satisfies the following:
  \begin{equation*}
    q \leq 2 + \frac{4}{n-1+2\epsilon}
  \end{equation*}
  In particular, if $h$ is independent of $t$ for large $t$, $2s$ is
  no larger than $1$ for $q \leq 2 + \frac{4}{n-1}$.
\end{remark}

\begin{proof}
  We start by letting $\tilde{U}(t,s)$ be $\pd[t]U_{v}(t,s)$ or $(1 +
  \lap_{k_{t}})^{1/2}U_{v}(t,s)$ regularized by order $\frac{n+1}{2} +
  \epsilon$:
  \begin{align*}
    \tilde{U}(t,s) &= \left( 1 + \lap_{k_{t}}\right)^{-\frac{n+1}{2} -
      \epsilon}\pd[t]U_{v}(t,s)
    \quad\quad\quad\quad \text{ or} \\
     \tilde{U}(t,s) &=\left( 1 +
      \lap_{k_{t}}\right)^{-\frac{n+1}{2}-\epsilon}(1+\lap_{k_{t}})^{1/2}U_{v}(t,s)
  \end{align*}
  Lemmas~\ref{lem:dispersive-1}, \ref{lem:dispersive-2}, and
  \ref{lem:dispersive-3} show that $\tilde{U}(t,s)$ is a bounded
  operator $L^{1}(\dk_{s}) \to L^{\infty}(\dk_{t})$ with the following
  bound:
  \begin{equation*}
    C\max \left( | t- s|^{-\frac{n-1}{2} + \epsilon} , 1 \right)
  \end{equation*}
  (Note that we may choose to remove the $\epsilon$ in the above
  bound.)
  
  Moreover, the energy estimates in Section~\ref{sec:energy-estimates}
  show that $\tilde{U}(t,s)$ is a bounded operator between
  $L^{2}$-based Sobolev spaces:
  \begin{equation*}
    L^{2}(\dk_{s}) \to \Wzero ^{\frac{n+1}{2} + \epsilon, 2}(\dk_{t})
  \end{equation*}
  For $0 \leq \alpha < \sqrt{\lambda}$ if $\lambda \leq n^{2}/4$ and
  $\alpha = n/2$ if $\lambda > n^{2}/4$, the operator has a bound of
  the following form:
  \begin{equation*}
    C e^{(n-2\alpha) (t-s)/2}
  \end{equation*}

  Interpolating the above two bounds and then de-regularizing yields
  that $\pd[t]U_{v}(t,s)$ and $(1+\lap_{k_{t}})^{1/2}U_{v}(t,s)$ are
  bounded operators:
  \begin{equation*}
    L^{q'}(\dk_{s}) \to \Wzero ^{ - \left(
        \frac{1}{q'}- \frac{1}{q} \right) \left( \frac{n+1}{2}+
        \epsilon\right) , q}
  \end{equation*}
  It obeys the following bound:
  \begin{equation*}
    C e^{(n-2\alpha)(t-s)/q}\max\left( |t-s|^{-\left(
          \frac{1}{q'}-\frac{1}{q}\right)\left( \frac{n-1}{2} +
          \epsilon\right)}, 1\right)
  \end{equation*}
  Here $q\in (2,\infty)$, $q'$ is the dual exponent to $q$, and
  $\alpha$ is as above.  (Note that $\frac{1}{q'} - \frac{1}{q} = 1-
  \frac{2}{q}$.)

  When $h$ is independent of $t$ for large $t$, we may repeat the
  above argument, replacing the regularizers with fixed
  Littlewood-Paley projectors to avoid the losses.
\end{proof}

\section{Strichartz estimates}
\label{sec:strichartz-estimates}

In this section we prove uniform local Strichartz estimates.  We treat
the homogeneous and inhomogeneous problems for $\lambda > 0$
separately.  

Before proving the estimates, we define the $L^{p}(I ; Z(t))$ spaces.
\begin{defn}
  \label{defn:strichartz-spaces}
  Suppose that $I \subset \reals$ is an interval and $Z(t)$ a family
  of Banach spaces on $I$.  We define the $L^{p}(I; Z(t))$ norm for a
  $Z$-valued function on $I$ by the following:
  \begin{equation*}
    \norm[L^{p}(I; Z)]{v} = \left( \int_{I} \norm[Z(t)]{v(t)}^{p}\dt\right)^{1/p}
  \end{equation*}
  If this norm is finite, we say that $v\in L^{p}(I; Z)$.
\end{defn}

\subsection{The homogeneous problem}
\label{sec:homogeneous-problem-1}

In this section we prove Theorem~\ref{thm:strichartz-homog}.

\begin{remark}
  \label{rem:homogeneous-problem-2}
  Note that if $s \leq 1$, Theorem~\ref{thm:strichartz-homog} yields
  an $L^{p}L^{q}$ estimate for $u$.  
\end{remark}

\begin{proof}
  For convenience, we give the proof for the ``odd'' problem ($\phi =
  0$).  The proof for the full homogeneous problem is nearly
  identical.  We first prove the estimates on an interval of length
  $T_{0}=1$ and then add them up to obtain the full bound.

  We start by defining, for $\psi \in L^{2}(\dk_{t_{0}})$, the
  operator $T_{q}$:
  \begin{equation*}
    T_{q}\psi = e^{-(n-2\alpha)(t-t_{0})/q}U'(t,t_{0})\psi
  \end{equation*}
  In the above we set $U'(t,t_{0})=\pd[t]U_{v}(t,t_{0})$.  Consider
  also the formal adjoint of $T_{q}$ considered as an operator
  $L^{2}(\dk_{t_{0}}) \to L^{\infty}_{t}L^{2}(\dk_{t})$.  For $F \in
  L^{1}_{t}([t_{0}, t_{0}+T_{0}]; L^{2}(\dk_{t}))$, this operator is
  given by the following:
  \begin{equation*}
    T_{q}^{*}F = \int _{t_{0}} ^{t_{0}+T_{0}} e^{-(n-2\alpha)(t-t_{0})/q}U'(s,t_{0})^{*}F(s)\ds
  \end{equation*}
  In particular, the operator $T_{q}T_{q}^{*}$ is given by the following:
  \begin{align*}
    \left( TT^{*}F\right)(t) &= \int_{t_{0}}^{t_{0}+T_{0}}
    e^{-(n-2\alpha)(t+s-2t_{0})/q}U'(t,t_{0})U'(s,t_{0})^{*}F(s)\ds \\
    &= \int_{t_{0}}^{t_{0}+T_{0}}V(t,s)F(s)\ds
  \end{align*}

  Theorem~\ref{thm:homog-dispersive-full} implies that $V(t,s)$ is
  bounded as an operator between the following spaces:
  \begin{equation*}
    \Wzero ^{\left( \frac{1}{q'}-\frac{1}{q}\right) \left(
        \frac{n+1}{4} + \frac{\epsilon}{2}\right), q'}(Y_{s}, \dk_{s})
    \to \Wzero^{-\left( \frac{1}{q'}-\frac{1}{q}\right) \left(
        \frac{n+1}{4} + \frac{\epsilon}{2}\right), q}(Y_{t},\dk_{t})
  \end{equation*}
  It has the bound $C|t-s|^{-\left(
      \frac{1}{q'}-\frac{1}{q}\right)\left( \frac{n-1}{2}\right)}$.  

  We now apply the Hardy-Littlewood-Sobolev method of fractional
  integration (see, for example, the book of Stein
  \cite{Stein:1970}).  In our setting, this states that convolution
  with $|t|^{-\alpha}$ is a bounded operator $L^{p}(\reals) \to
  L^{r}(\reals)$, provided $\frac{1}{p}+\alpha = \frac{1}{r} + 1$.  In
  particular, $TT^{*}$ is a bounded operator:
  \begin{equation*}
    L^{p'}\left( [t_{0}, t_{0} + T_{0}] ; \Wzero^{s,q'}(\dk_{s})\right) \to
    L^{p}\left( [t_{0}, t_{0} + T_{0}]; \Wzero ^{-s, q}(\dk_{t})\right)
  \end{equation*}
  Here $p'$ is the conjugate exponent to $p$, $q'$ the dual exponent
  to $q$, and $p$ and $s$ are given by the following:
  \begin{align*}
    \frac{2}{p} + \frac{n-1}{q} &= \frac{n-1}{2} \\
    \frac{2}{q} \left( \frac{n+1}{4} + \frac{\epsilon}{2}\right) &= 
     \frac{n+1}{4} + \frac{\epsilon}{2} - s
  \end{align*}
  In other words, the theorem holds in the case where we have equality
  in equation~\eqref{eq:admissible-epsilon} (with a slightly
  different $\epsilon$).

  To prove the remaining estimates, we regularize $U'(t,t_{0})$ by
  order $\frac{n}{2} + \frac{\epsilon}{2}$ and then the resulting
  composition has an integrable symbol.  This yields a bound of the
  following form (here $\tilde{U}'$ is this regularization):
  \begin{equation*}
    \norm[L^{1}(\dk_{s})\to
    L^{\infty}(\dk_{t})]{\tilde{U}'(t,t_{0})\tilde{U}'(s,t_{0})^{*}}\leq C
  \end{equation*}
  Interpolating this bound with the energy estimate and then removing
  the regularization shows that, for all $2 < q < \infty$,
  $U'(t,t_{0})U'(s,t_{0})^{*}$ is a bounded operator $\Wzero^{s,q'}\to
  \Wzero^{-s,q}$ with bound $Ce^{(n-2\alpha)(t+s-2t_{0})/q}$ where $s$
  satisfies the following:
  \begin{equation*}
   \frac{n}{q} + \frac{\epsilon}{q} = \frac{n}{2} + \frac{\epsilon}{2} - s
  \end{equation*}
  In particular, $T_{q}T_{q}^{*}$ is then a bounded operator
  $L^{1}W^{s,q'}\to L^{\infty}W^{-s,q}$.  In other words, we have that
  the theorem holds for $p=\infty$.  Interpolating these bounds with
  the previous ones proves half of the theorem in the ``odd'' case for
  intervals of length $T_{0}$.

  To finish the proof in the ``odd'' case on intervals of length
  $T_{0}$, we may replace $U'(t,t_{0}) = \pd[t]U_{v}(t,t_{0})$ with
  $\left( 1 + \lap_{k_{t}}\right)^{1/2} U_{v}(t,t_{0})$ without
  changing the proof.  The estimates in the ``even'' case follow by
  replacing $U_{v}(t,t_{0})$ with $U_{p}(t,t_{0})\left( 1 +
    \lap_{k_{t_{0}}}\right)^{-1/2}$.  

  The estimate on an interval of length $T$ follows by summing the
  estimates on intervals of length $T_{0} = 1$ and using the energy
  estimates of Proposition~\ref{prop:energy-est-main}.  

  The final statement of the theorem follows by repeating the above
  proof with the dispersive estimates of
  Theorem~\ref{thm:h-indep-t-dispersive}.  
\end{proof}

\begin{remark}
  \label{rem:get-global-by-weight}
  By inserting exponential (or even polynomial) weights, it is
  possible to extend the bounds to be global in time, but for weighted
  spaces.  (One replaces $T_{q}$ with $T_{q} = e^{-(n+\alpha +
    \delta)(t-t_{0})/q}U(t,t_{0})$ in the above.)
\end{remark}

\subsection{The inhomogeneous problem}
\label{sec:inhom-probl-1}

In this section we prove Theorem~\ref{thm:strichartz-inhomog}.  Our
approach to the inhomogeneous problem is similar, but complicated by
our inability to ``sum up'' bounds on small intervals without
incurring exponential penalties from the energy estimate when $\lambda
\leq \frac{n^{2}}{4}$ or obtaining a term of the form
$\norm[L^{1}L^{2}]{f}$ on the right side of the estimate.

\begin{remark}
  \label{rem:inhom-probl-3}
  We make several remarks about the statement of
  Theorem~\ref{thm:strichartz-inhomog}.  :
  \begin{enumerate}
  \item The estimates in Theorem~\ref{thm:strichartz-inhomog} are
    weaker than what might be expected from the homogeneous case
    because we must account for the far-field behavior of the
    propagator.
  \item We require the exponents $q$ and $q'$ for the spatial parts of
    the estimate to be dual to each other because the propagator is no
    longer a unitary group and so we cannot write $U(t,s) = U(t,t_{0})
    U(s,t_{0})^{*}$.  This prevents us both from deducing the
    inhomogeneous estimate directly from the homogeneous one and from
    ``decoupling'' the exponents in the inhomogeneous problem.  The
    regularity exponent here looks slightly different from the one in
    the homogeneous setting ($2s$ versus $s$) for the same reason.
  \item If $q < 2 + \frac{4}{n-1}$ we may guarantee that $1-2s \leq 0$
    by choosing $p$ and $\epsilon$ appropriately.  This yields an
    $L^{p'}L^{q'}\to L^{p}L^{q}$ estimate.
  \end{enumerate}
\end{remark}

We require a variant of the Christ-Kiselev lemma (first proved by
Christ and Kiselev \cite{Christ:2001}), which we state now.  The
version we state is slightly different from those in the literature,
but the proof given by Hassell, Tao, and Wunsch \cite{Hassell:2006}
remains valid when $X$ and $Y$ are replaced by smoothing varying
families of Banach spaces $X(t)$ and $Y(t)$.

\begin{lem}[Christ-Kiselev Lemma \cite{Christ:2001}, see
  \cite{Hassell:2006} for this variant]
  \label{lem:christ-kiselev}
  Let $X(t)$ and $Y(t)$ be smoothly varying families of Banach spaces,
  and for all $s,t\in \reals$, let $K(t,s):X(s)\to Y(t)$ be an
  operator-valued kernel from $X(s)$ to $Y(t)$.  Suppose we have the
  following estimate for all $t_{0}\in \reals$, some $1 \leq p < q
  \leq \infty$ and all $f\in L^{p}\left( (-\infty, t_{0}); X(t)\right)$:
  \begin{equation*}
    \norm[L^{q}\left( {[t_{0}, \infty)};Y(t)\right)]{\int_{s <
        t_{0}}K(t,s)f(s)\ds} \leq A\norm[L^{p}\left( \reals; X(t)\right)]{f}
  \end{equation*}
  There is some constant $C$ (depending on $p$ and $q$) so that the
  following estimate holds:
  \begin{equation*}
    \norm[L^{q}(\reals; Y(t))]{\int_{s< t}K(t,s)f(s)\ds} \leq
    CA\norm[L^{p}(\reals; X(t))]{f}
  \end{equation*}
  Moreover, the same type of estimate holds if $\reals$ is
  replaced by a finite interval.
\end{lem}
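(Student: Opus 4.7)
The plan is to adapt the standard Christ-Kiselev argument (as presented in \cite{Hassell:2006}) by a dyadic decomposition of the input, noting that the smooth $t$-dependence of the Banach spaces $X(t)$ and $Y(t)$ plays no essential role since the construction uses only integration in the time variable and the hypothesis is applied on time-subintervals.

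First, by a standard density and normalization argument, I would reduce to the case $f \geq 0$ (meaning $\norm[X(t)]{f(t)}$ has a convenient scalar surrogate), $\norm[L^{p}(\reals;X(t))]{f} = 1$, and $f$ supported on a bounded interval. Define the measure $d\mu(s) = \norm[X(s)]{f(s)}^{p}\ds$. For each $n \geq 0$, I would partition the support of $f$ into $2^{n}$ successive intervals $I_{n,1}, \ldots, I_{n,2^{n}}$, each carrying mass $\mu(I_{n,k}) = 2^{-n}$, arranged so that the partition at level $n+1$ refines the partition at level $n$. This produces a binary tree of intervals.

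Next, I would decompose the retarded region $\{(t,s):s<t\}$ as a disjoint union of product rectangles $J_{\alpha}\times I_{\alpha}$, where at each level of the tree, a rectangle is formed from a pair of sibling intervals (the earlier one in the $s$-slot, the later one in the $t$-slot). This is exactly the covering lemma behind the Whitney-type decomposition of the upper triangle used by Christ and Kiselev. On each such rectangle, the operator with full kernel $K(t,s)$ acts on $f\mathbf{1}_{I_{\alpha}}$ and the hypothesis gives
\begin{equation*}
\norm[L^{q}(J_{\alpha};Y(t))]{\int_{I_{\alpha}}K(t,s)f(s)\ds} \leq A\norm[L^{p}(I_{\alpha};X(t))]{f} = A\cdot 2^{-n/p},
\end{equation*}
where $n$ is the dyadic level of the pair.

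Finally, I would assemble the estimate by summing the $L^{q}$-norms over the disjoint rectangles: since the $J_{\alpha}$ at a fixed level $n$ are disjoint and there are at most $2^{n}$ of them, the level-$n$ contribution to $\norm[L^{q}(\reals;Y(t))]{\tilde{T}f}$ is bounded by $A\cdot 2^{n/q}\cdot 2^{-n/p} = A\cdot 2^{-n(1/p-1/q)}$. Summing over $n$ yields a geometric series converging precisely because $p<q$, producing the claimed constant $C=C(p,q)$. The statement for a finite interval follows by replacing $\reals$ with $I$ throughout, as the support of $f$ may be taken inside $I$. The main obstacle is the rectangle decomposition step, but since it is purely combinatorial and uses only the ordered structure of $\reals$ together with the measure $\mu$, the smoothly varying Banach space structure on $X(t)$ and $Y(t)$ enters only through $L^{p}$ and $L^{q}$ integration and is preserved verbatim from the scalar-target case.
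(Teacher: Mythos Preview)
The paper does not give its own proof of this lemma: it simply quotes the statement, attributes it to Christ--Kiselev and the variant in \cite{Hassell:2006}, and remarks that the proof in \cite{Hassell:2006} ``remains valid when $X$ and $Y$ are replaced by smoothly varying families of Banach spaces $X(t)$ and $Y(t)$.'' Your sketch is precisely that proof written out---the standard dyadic (Whitney-type) decomposition of the retarded region, applying the hypothesis on each sibling rectangle and summing a geometric series in $n$ using $p<q$---together with the same observation that the time-dependence of the target spaces is irrelevant because the construction only uses the order structure of $\reals$ and integration in time. So your approach is the same as the one the paper defers to, only more explicit; there is no discrepancy to flag.
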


\begin{proof}[Proof of Theorem~\ref{thm:strichartz-inhomog}]
  The solution operator $E$ for the inhomogeneous problem is given by
  the following:
  \begin{equation*}
    (Ef)(t) = \int_{t_{0}}^{t}U_{v}(t,s)f(s)\ds
  \end{equation*}
  We seek estimates for $\pd[t]E$ and $(1 + \lap_{k_{t}})^{1/2}E$.
  For convenience we use $L$ to denote either $\pd[t]$ or $(1 +
  \lap_{k_{t}})^{1/2}$.
  
  We fix $T>0$ and consider the operator $A$ given by the following integral:
  \begin{equation*}
    (Af)(t) = \int_{t_{0}}^{t_{0}+T}e^{-(n-2\alpha)t/q}LU_{v}(t,s)e^{(n-2\alpha)s/q}f(s)\ds
  \end{equation*}
  The dispersive estimate in Theorem~\ref{thm:inhomog-dispersive} and
  the Hardy-Littlewood-Sobolev method of fractional integration show
  that $A$ is a bounded operator between the following spaces:
  \begin{equation*}
    L^{p'}\left( [t_{0}, t_{0} + T]; L^{q'}(\dk_{t})\right) \to
    L^{r}\left( [t_{0}, t_{0} + T]; \Wzero ^{-2s, q}(\dk_{t})\right)
  \end{equation*}
  Here $\frac{1}{r} + 1 = \frac{1}{p'} + \frac{n-1}{2}\left(
    \frac{1}{q'}- \frac{1}{q}\right)$, $2s = \left( 1 -
    \frac{2}{q}\right)\left( \frac{n+1}{2} + \epsilon\right)$, and the
  bound is $CT^{\frac{n-1}{2}\left( 1 - \frac{2}{q}\right)}$ (and
  independent of $t_{0}$).

  The Christ-Kiselev lemma (Lemma~\ref{lem:christ-kiselev}) then shows
  that the operator $E_{0}$ is bounded as an operator between the same
  spaces, where $E_{0}$ is given by the following:
  \begin{equation*}
    (E_{0}f)(t) = \int_{t_{0}}^{t}e^{-(n-2\alpha)t/q}LU_{v}(t,s)e^{(n-2\alpha)s/q}f(s)\ds
  \end{equation*}

  The operator $LE$ is related to $E_{0}$ in the following way:
  \begin{equation*}
    (LEf)(t) = e^{(n-2\alpha)t/q}LE_{0}\left( e^{-(n-2\alpha)s/q}f(s)\right)(t)
  \end{equation*}
  The operator $LE$ is thus bounded between the following weighted
  spaces with the same bound and with all exponents as above:
  \begin{align*}
    &e^{(n-2\alpha)t/q}L^{p'}\left( [t_{0}, t_{0} + T];
      L^{q'}(\dk_{t})\right) \\
    &\quad \to e^{(n-2\alpha)t/q}L^{r}\left( [t_{0}, t_{0} + T];
      \Wzero^{-2s, q}(\dk_{t})\right)
  \end{align*}
  If we demand that $r = p$, the dual exponent of $p'$, we must have
  that $p$ and $q$ are related:
  \begin{equation*}
    \frac{2}{p} + \frac{n-1}{q} = \frac{n-1}{2}
  \end{equation*}
  
  A similar argument to the one in the proof of
  Theorem~\ref{thm:strichartz-homog} (regularizing by order $n +
  \epsilon$ and then interpolating) finishes the proof of the main
  estimate.  The statement for $h$ independent of $t$ follows by using
  the improved estimates in Theorem~\ref{thm:inhomog-dispersive}.
\end{proof}

\begin{remark}
  \label{sec:inhom-probl-2}
  Note that in the above, if $\lambda > \frac{n^{2}}{4}$, we could in
  fact ``sum up'' the bounds on small intervals by using the better
  energy estimate.  This would allow us to strengthen the bound in this
  case to $C \max(1, T)$, but at a cost of including the $L^{1}L^{2}$
  norm of $f$.
\end{remark}

\section{An application to a semilinear equation}
\label{sec:an-appl-semil}

In this section we prove Theorem~\ref{thm:main-slw-critical}, the
application of the Strichartz estimates to a class of semilinear
Klein-Gordon equations with $\lambda > \frac{n^{2}}{4}$.  In
particular, we assume $h$ is independent of $t$ for large $t$ and
consider equation~\eqref{eq:semilinear-main} with $k= 1 +
\frac{4}{n-1}$.  

We start by proving an energy estimate for solutions of the semilinear
equation~\eqref{eq:semilinear-main}.  Recall that $F_{k}(u) =
\int_{0}^{u}f_{k}(v)\dv$ is a (positive) antiderivative of $f_{k}$.
\begin{prop}
  \label{prop:energy-est-semilinear}
  Suppose that $\lambda > \frac{n^{2}}{4}$ and that $u$ is a solution
  of the semilinear equation~\eqref{eq:semilinear-main}.  There is a
  constant $C$ (independent of $t_{0}$) so that the following energy
  estimate holds:
  \begin{align*} 
    &E(t) = \frac{1}{2}\int_{Y_{t}}\left[ \left| \pd[t]u(t)\right|^{2}
      + \left| \grad_{k_{t}} u(t)\right|^{2}_{k_{t}} +
      \lambda
      |u(t)|^{2} \right]\dk_{t} + \int_{Y_{t}}F_{k}(u(t))\dk_{t}\\
    &\quad\quad\quad\leq C E(t_{0})
  \end{align*}
\end{prop}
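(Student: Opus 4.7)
The plan is to introduce a corrected energy whose time derivative is nonpositive, in analogy with the conjugation $v = e^{nt/2}u$ used in Proposition~\ref{prop:energy-est-main}. Specifically, set
\begin{equation*}
\tilde{E}(t) = E(t) + \frac{n}{2}\int_{Y_{t}} u\,\pd[t]u\,\dk_{t}.
\end{equation*}
The bounded portion of the time interval can be handled by standard local hyperbolic theory as in Proposition~\ref{prop:energy-est-for-p-tilde}, so it suffices to work in the regime of large $t$ where $h$ is independent of $t$. There $\pd[t](\dk_{t}) = n\,\dk_{t}$, the coefficient $\pd[t]\sqrt{h_{t}}/\sqrt{h_{t}}$ vanishes, and the equation reads $\pd[t]^{2}u + n\pd[t]u + \lap_{k_{t}}u + \lambda u + f_{k}(u) = 0$.

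First I would differentiate $E(t)$ directly, using $\pd[t](\dk_{t}) = n\,\dk_{t}$, the identity $\pd[t]|\grad u|^{2}_{k_{t}} = -2|\grad u|^{2}_{k_{t}} + 2\langle\grad\pd[t]u,\grad u\rangle_{k_{t}}$, and integration by parts, then substituting the equation for $\pd[t]^{2}u$. The result is
\begin{equation*}
\pd[t]E = -\frac{n}{2}\int|\pd[t]u|^{2}\dk_{t} + \frac{n-2}{2}\int|\grad u|^{2}_{k_{t}}\dk_{t} + \frac{n\lambda}{2}\int|u|^{2}\dk_{t} + n\int F_{k}(u)\dk_{t},
\end{equation*}
which on its own only yields an exponentially growing Gronwall bound. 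A parallel computation, using the equation to eliminate $\pd[t]^{2}u$ inside $\pd[t]\int u\pd[t]u\,\dk_{t}$, produces the virial-type identity
\begin{equation*}
\pd[t]\int u\,\pd[t]u\,\dk_{t} = \int|\pd[t]u|^{2}\dk_{t} - \int|\grad u|^{2}_{k_{t}}\dk_{t} - \lambda\int|u|^{2}\dk_{t} - \int u f_{k}(u)\,\dk_{t}.
\end{equation*}
Adding $\frac{n}{2}$ times this identity to $\pd[t]E$, the kinetic, mass, and half of the gradient terms cancel exactly, leaving
\begin{equation*}
\pd[t]\tilde{E} = -\int|\grad u|^{2}_{k_{t}}\dk_{t} + n\int\left(F_{k}(u) - \tfrac{1}{2}u f_{k}(u)\right)\dk_{t} \leq 0,
\end{equation*}
where the final inequality is the consequence of (A3) noted immediately after the assumptions.

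The last step is to verify that $\tilde{E}$ and $E$ are comparable. By Cauchy--Schwarz with weight $\sqrt{\lambda}$,
\begin{equation*}
\left|\frac{n}{2}\int u\,\pd[t]u\,\dk_{t}\right| \leq \frac{n}{4\sqrt{\lambda}}\int\left(\lambda|u|^{2} + |\pd[t]u|^{2}\right)\dk_{t} \leq \frac{n}{2\sqrt{\lambda}}E(t),
\end{equation*}
where the second inequality discards the nonnegative $F_{k}$ contribution to $E$ using (A4). Because $\lambda > n^{2}/4$ gives $\frac{n}{2\sqrt{\lambda}} < 1$, one obtains $(1 - \frac{n}{2\sqrt{\lambda}})E \leq \tilde{E} \leq (1 + \frac{n}{2\sqrt{\lambda}})E$. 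Combined with the monotonicity $\tilde{E}(t) \leq \tilde{E}(t_{0})$, this yields the desired bound $E(t) \leq C E(t_{0})$ with $C$ depending only on $n$ and $\lambda$.

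The main obstacle is identifying the correct corrected energy: a direct computation of $\pd[t]E$ produces three distinct nonnegative terms of size comparable to $E$, so no Gronwall-free bound is available from $E$ alone. The virial correction $\frac{n}{2}\int u\pd[t]u\,\dk_{t}$, which at the level of Section~\ref{sec:energy-estimates} amounts to implementing the substitution $v = e^{nt/2}u$ directly on the energy, is precisely what makes all the growing contributions cancel, and the assumption $\lambda > n^{2}/4$ is exactly what keeps the resulting corrected energy coercive on the original energy space.
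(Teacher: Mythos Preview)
Your proof is correct and is essentially the same as the paper's. Your corrected energy $\tilde{E}(t) = E(t) + \frac{n}{2}\int_{Y_t} u\,\pd[t]u\,\dk_t$ is exactly $e^{-nt}$ times the paper's energy $\tilde{E}$ for $v = e^{nt/2}u$, so the two arguments are algebraically equivalent; you have simply carried out the conjugation at the level of the energy rather than the equation, as you yourself note. One small point: Proposition~\ref{prop:energy-est-semilinear} as stated does not assume $h$ is independent of $t$ for large $t$ (that hypothesis enters only in Theorem~\ref{thm:main-slw-critical}), so in the general case your identities for $\pd[t]E$ and $\pd[t]\int u\,\pd[t]u\,\dk_t$ acquire $O(e^{-t})$ corrections from $\pd[t]\sqrt{h_t}/\sqrt{h_t}$ and from the commutator term in Lemma~\ref{lem:commutator-calc}; these are harmless and are absorbed by Gronwall exactly as in Proposition~\ref{prop:energy-est-for-p-tilde}.
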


\begin{proof}
  Consider the function $v = e^{nt/2}u$.  The function $v$ satisfies
  the following semilinear equation (in the notation of
  Section~\ref{sec:energy-estimates}):
  \begin{align*}
    \tilde{P}\left(0,\lambda - \frac{n^{2}}{4}\right) v = -
    e^{nt/2}f_{k}\left( e^{-nt/2}v\right)
  \end{align*}
  Consider now the energy $\tilde{E}(t)$ of the function $v$:
  \begin{align*}
    \tilde{E}(t) &= \frac{1}{2}\int_{Y_{t}}\left[ \left|
        \pd[t]v\right|^{2} + \left| \grad_{k_{t}}v\right|^{2}_{k_{t}} + \left(
        \lambda - \frac{n^{2}}{4}\right)|v|^{2} \right]\dk_{t}  \\
    &\quad \quad \quad + \int_{Y_{t}}e^{nt}F_{k}\left(
      e^{-nt/2}v\right)\dk_{t}
  \end{align*}
  We now observe that Assumption (A3) above on our nonlinearity
  implies the following:
  \begin{align*}
    &\pd[t] \int_{Y_{t}}e^{nt/2}F_{k}\left( e^{-nt/2}v\right)\dk_{t}
    \\
    &\quad =\int_{Y_{t}}\left(
      e^{nt/2}f_{k}(e^{-nt/2}v)\pd[t]v\right)\dk_{t} \\
    &\quad\quad\quad + \int_{Y_{t}}\left( ne^{nt}F_{k}(e^{-nt/2}v) -
      \frac{n}{2}e^{nt}f_{k}(e^{-nt/2}v)e^{-nt/2}v\right)\dk_{t} \\
    &\quad \leq \int_{Y_{t}} e^{nt/2}f_{k}(e^{-nt/2}v)\pd[t]v \dk_{t}
  \end{align*}
  The methods of Section~\ref{sec:energy-estimates} then imply that
  there is a constant $C$ so that $\tilde{E}(t) \leq C
  e^{nt}\tilde{E}(t_{0})$.

  We now use that $u = e^{-nt/2}v$, together with $\lambda >
  \frac{n^{2}}{4}$ to conclude that there is a constant $C'$
  (independent of $t$) so that the following estimate holds:
  \begin{equation*}
    \frac{1}{C'}e^{nt}E(t) \leq \tilde{E}(t) \leq C' e^{nt}E(t)
  \end{equation*}
  Using the energy bound for $v$ and dividing by $e^{nt}$ finishes the proof.
\end{proof}

\begin{proof}[Proof of Theorem~\ref{thm:main-slw-critical}]
  We start by noting that Theorem~\ref{thm:strichartz-homog}, together with
  $k = 1 + \frac{4}{n-1}$, implies a bound for the solution $u$ of the
  homogeneous problem:
  \begin{align*}
    P(\lambda) u &= 0 \\
    (u, \pd[t]u)|_{t=t_{0}} &= (\phi, \psi)
  \end{align*}
  The bound is the following:
  \begin{equation*}
    \norm[L^{k+1}\left( {[t_{0}, t_{0}+T]};
      L^{k+1}(\dk_{t})\right)]{u} \leq C
    (1+T)\norm[{\espace[t_{0}]}]{(\phi, \psi)}
  \end{equation*}
  Here $C$ is independent of $t_{0}$ and $T$.  Similarly,
  Theorem~\ref{thm:strichartz-inhomog} implies a bound for the
  solution $u$ of the inhomogeneous equation:
  \begin{align*}
    P(\lambda) u &= f, \\
    (u, \pd[t]u)|_{t=t_{0}} &= (0,0)
  \end{align*}
  Indeed, $u$ satisfies the following:
  \begin{equation*}
    \norm[L^{k+1}\left( {[t_{0}, t_{0}+T]};
      L^{k+1}(\dk_{t})\right)]{u} \leq C (1+T^{r})^{1/2}
    \norm[L^{\frac{k+1}{k}}\left( {[t_{0}, t_{0} + T]}; L^{\frac{k+1}{k}}(\dk_{t})\right)]{f}
  \end{equation*}
  Here $r$ is positive and again the constant is independent of
  $t_{0}$ and $T$.  Note that we have used here that $h$ is
  independent of $t$ for large $t$ so that the lossless bounds are
  available.

  Suppose now that $\phi$ and $\psi$ satisfy the following bound:
  \begin{equation*}
    \norm[{\espace[t_{0}]}]{(\phi, \psi)} +
    \norm[L^{k+1}(\dk_{t_{0}})]{\phi} \leq M
  \end{equation*}
  Note that we may control the $L^{k+1}$ norm of $\phi$ by considering
  where $\phi$ is large ($\geq 1$) and small ($\leq 1$).  Assumption
  (A5) implies that the norm where $\phi$ is large is bounded by the
  nonlinear part of the energy while the part where $\phi$ is small is
  bounded by the $L^{2}$ norm of $\phi$ (which is in turn controlled
  by the energy because $\lambda > \frac{n^{2}}{4}$).  This
  observation and the energy estimate in
  Proposition~\ref{prop:energy-est-semilinear} then imply that if $u$
  is a solution of equation~\eqref{eq:semilinear-main} with initial
  data $(\phi, \psi)$, then there is a constant $A$ so that for all $t
  \geq t_{0}$, the following bound holds:
  \begin{equation*}
    \norm[\espace]{(u(t), \pd[t]u(t))} + \norm[L^{k+1}(\dk_{t})]{u(t)}
    \leq AM
  \end{equation*}

  The rest of the proof is now standard but is included here for
  completeness.  We establish a solution on a time interval of length
  $T$ via the contraction mapping principle, and then use the energy
  estimates and the independence of the constants on $t_{0}$ to extend
  the solution.

  We start by finding a solution on an interval $[t_{0}, t_{0}+T]$.
  We seek a fixed point of the following map:
  \begin{equation*}
    \mathcal{F}u(t) = \mathcal{S}(t)(\phi, \psi) + \mathcal{G}(f_{k}(u))(t)
  \end{equation*}
  Here $\mathcal{S}$ is the solution operator for the homogeneous
  problem and $\mathcal{G}$ is the solution operator for the
  inhomogeneous problem with zero initial data.  The main estimate
  used is the following:
  \begin{align*}
    &\norm[L^{\frac{k+1}{k}}\left( {[t_{0},t_{0}+T]};
      L^{\frac{k+1}{k}}(\dk_{t})\right)]{f_{k}(u) -f_{k}(v)} \\
    &\quad\quad\quad \leq C
    \norm[L^{k+1}L^{k+1}]{u-v}\norm[L^{k+1}L^{k+1}]{|u|+|v|}^{k-1}
  \end{align*}
  This estimate follows from the mean value theorem and Assumption
  (A2).  Suppose now that $\phi$ and $\psi$ satisfy the following
  smallness condition:
  \begin{equation*}
    \norm[{\espace[t_{0}]}]{(\phi, \psi)} +
    \norm[L^{k+1}(\dk_{t_{0}})]{\phi} \leq A\epsilon
  \end{equation*}
  The estimates above show that if $\norm[L^{k+1}L^{k+1}]{u} \leq
  K\epsilon$, then $v = \mathcal{F}u$ satisfies the following:
  \begin{equation*}
    \norm[L^{k+1}L^{k+1}]{v} \leq C(1+T^{r})A\epsilon + C' (1+T^{r})^{1/2}(K\epsilon)^{k}
  \end{equation*}
  In particular, if $\epsilon$ is small enough, we may arrange that
  $\norm[L^{k+1}L^{k+1}]{v} \leq K\epsilon$ as well.

  The contraction mapping procedure continues as follows.  Let
  $u^{(-1)}=0$ and $u^{(0)}$ be the solution of the homogeneous
  problem with initial data $(\phi, \psi)$.  For $m> 0$, let $u^{(m)}
  = \mathcal{F}u^{(m-1)}$.  The above estimates then imply that there
  is new constant $C$ so that the following estimate holds:
  \begin{align*}
    &\norm[L^{k+1}\left( {[t_{0},t_{0}+T]};
      L^{k+1}(\dk_{t})\right)]{u^{(m+1)}-u^{(m)}} \leq \\
    &\quad\quad\quad\quad C(1+T^{r})\epsilon\norm[L^{k+1}\left( {[t_{0},t_{0}+T]};
      L^{k+1}(\dk_{t})\right)]{u^{(m)}- u^{(m-1)}}
  \end{align*}
  Thus, we may find $\epsilon$ small (and independent of $t_{0}$) so
  that the sequence $u^{(m)}$ converges to a solution in $L^{k+1}\left(
    [t_{0}, t_{0}+T]; L^{k+1}(\dk_{t})\right)$.  Uniqueness follows by
  standard arguments.  

  We now appeal to the energy bounds extend the solution to an
  interval of length $2T$, as the energy at time $t_{0}+T$ is no
  larger than $A^{2}\epsilon$.  We may thus iterate the process, finishing
  the proof.
\end{proof}

\section{Acknowledgements}
\label{sec:acknowledgements}

The author is very grateful to Rafe Mazzeo and Andr{\'a}s Vasy for
countless helpful discussions and for supervising the thesis on
which much of this research is based.  The author is also grateful
to Jared Wunsch, Austin Ford, Jeremy Marzuola, and Terence Tao for
helpful conversations, and to an anonymous referee for numerous
suggestions to improve this paper.  This research was partially
supported by NSF grants DMS-0801226 and DMS-1103436.

\bibliographystyle{alpha}
\bibliography{papers}

\end{document}